\documentclass[11pt]{amsart}
\usepackage[T1]{fontenc}
\usepackage[latin1]{inputenc}
\usepackage{a4wide}
\usepackage{setspace}
\usepackage{xypic}
\usepackage{amssymb}
\usepackage{amsthm}
\usepackage[english]{babel}
\usepackage{latexsym}
\date{}

\usepackage{srcltx}

\newtheorem{thm}{Theorem}[section]

\newtheorem{conjL'}[thm]{Conjecture $\textbf{L}(\overline{\mathcal{X}}_{et},d)_{\geq0}$}

\newtheorem{prop}[thm]{Proposition}

\newtheorem{lem}[thm]{Lemma}
\newtheorem{cor}[thm]{Corollary}

\newenvironment{f-proof}[1][\sc Proof.]{\begin{trivlist}
\item[\hskip \labelsep {\bfseries #1}]}{\hfill{$\square$}\end{trivlist}}

%%%%%%%%%%%%%%%%%%%%%%%%%%%%%%%%%%%%%%%%%%%%%%%%%%%%%%%%%%%%%%%%%%%%%%%%%%%%

\begin{document}
\title{Orthogonal representations of affine group schemes and twists of symmetric bundles}

\author{Ph. Cassou-Nogu\`es, T. Chinburg, B. Morin, M.J. Taylor  }

\maketitle

\tableofcontents

\section{ Introduction}
Let $K$ be a field of characteristic different from $2$ and let $(M, q)$ be a pair consisting of a finite dimension $K$-vector space $M$,
endowed with a non-degenerate quadratic form $q$. Let $L/K$ be a finite Galois extension with $G=\mbox{Gal}(L/K)$. Any orthogonal
representation, defined  by a group homomorphism $\rho: G\rightarrow {\bf O}(q)$,   gives
rise to a $1$-cocycle in the Galois cohomology set $\check H^{1}(K, {\bf O}(q))$. Since this set classifies the isometry classes of quadratic forms
with the same rank as $q$, one can attach to $(\rho, L)$ a new quadratic form which we shall call the {\it twist of $q$ by $(\rho, L)$}.
Following  a result of Serre on the trace form of  finite separable extensions  in [Se1],  Fr\"ohlich initiated  a series of papers dedicated
to the study of such twists. In [F] he showed  that the twist of the form is not just provided by the abstract definition described above, but can also be described by a simple explicit formula. Moreover, he obtained some beautiful comparison formulas between the Hasse-Witt invariants of the form and its twists,
involving the  Stiefel-Whitney class  and the spinor norm class  of $\rho $.

The work of Fr\"ohlich and Serre has been generalized in various ways, either by providing comparison formulas between the
invariants attached to different kind of representations or interesting quadratic forms as in  [Sa1] and [Sa2] or by considering the case of
higher dimensions ([CNET1], [CNET2], [EKV], and  [K]). One can develop a theory of symmetric bundles over a scheme of arbitrary
dimension [Kne], which are equivariant for the action of a group scheme.   A notion of the  twist of a form can be
defined  in this framework; we now describe this construction. We fix a scheme $Y$ locally of finite type,  in which $2$ is invertible and a finite and flat
group scheme $G$ over $Y$. We consider a symmetric bundle $(M, q)$ over $Y$ which is the underlying bundle  of an orthogonal representation
of $G$,  given by a morphism of group schemes  $\rho: G\rightarrow {\bf O}(q)$. Following Milne, [M], III, Section 4, one can associate to any
group scheme $H$ over $Y$ a cohomological set $\check H^{1}(Y, H)$.  The set $\check H^{1}(Y, {\bf O}(q))$ is seen to classify the isometry classes
of symmetric bundles over $Y$ of the same rank as $q$. Using the description of the cohomological set in terms of classes of cocycles,
it  is easily verified that
$\rho$ induces a natural map
$$\rho_{*}: \check H^{1}(Y, G)\rightarrow \check H^{1}(Y, {\bf O}(q))\ .$$
There is a canonical bijection between the isomorphism classes of $G$-torsors over $Y$ and the set $\check H^{1}(Y, G)$, [M], III,
Theorem 4.3 and Proposition 4.6. Therefore any $G$-torsor $X$ defines via $\rho_{*}$ a class in $ \check H^{1}(Y, {\bf O}(q))$ and
thus a symmetric bundle,  which is unique up to isometry. We shall call this new symmetric bundle {\it the twist of $(M, q)$ by $( \rho, X)$}.
One should observe that this definition coincides with   Fr\"ohlich's definition  when considering $Y=\mbox{Spec}(K)$,  $X=\mbox{Spec}(L)$
as the torsor and $G$ as  the constant group scheme associated with the Galois group $\mbox{Gal}(L/K)$.

Our aim is to generalize
Fr\"ohlich's work in this geometric framework. This then leads us to two different questions: first to provide an explicit description of the twist; and
then secondly to obtain comparison formulas between the invariants of the form and the invariants of its twists. The process of  twisting an equivariant
 symmetric bundle by an orthogonal representation provides us with a way to generate new symmetric bundles. It is indeed important
 to be able to identify these new bundles and to compare their invariants with the invariants of the initial form and the representation. In this paper we consider the first question: namely, to give an explicit formula for a twist, which leads itself to detailed calculations.  This is a matter that we  treated
 previously in  [CNET1]  and [CNET2] for constant groups schemes. In these papers, because  $G$ was a  constant group scheme,  it was possible to generalize  Fr\"ohlich's construction
in a relatively straightforward manner. Here though we wish to deal with equivariant symmetric bundles for the action of an arbitrary
  finite and flat group scheme; we are therefore obliged to modify
substantially  our previous constructions in order  to take into account the fact that the group scheme $G$ is no longer  \'etale  with the result  that the torsors will admit
 ramification. This then is a crucial and major difference with the previous constant case.

 In this paper we consider the affine situation for equivariant bundles for non-constant group schemes.  The general case will then follow by  patching  together the affine constructions. The natural framework for this article is essentially algebraic; our tools derive from the theory of Hopf algebras and principal homogeneous spaces, with the module of integrals of a Hopf order playing an exceptionally important role. To avoid any risk of confusion,  we have adhered, where possible, to  the
 geometric language used previously  in [CNET1] and [CNET2].

 The first  problem that we encounter  in the  geometric context, when seeking comparison formulas,  derives  from the definitions of the various cohomological invariants that we wish to
attach to our forms and representations.  Jardine in [J1], [J2], [J3] and later, Esnault, Kahn and Viehweg in  [EKV] and [K]
have considered  this question.
In our forthcoming  paper [CCMT] we will revisit their work with the language of topos. We believe that this approach casts important new light on
the subject and will  provide us with new and extremely  efficient tools
for the further generalizations that we  have in mind  (where we wish to consider  tame actions instead of torsors).
The equivalence of categories between  torsors and morphisms of topos, as described by Grothendieck in [SGA4], IV, Section 5,  will
play a key role in this coming paper. The constructions of this paper illustrate this fundamental point in the affine situation.

 We conclude this introduction by describing the contents and structure of the paper. In Section 2, after  some elementary algebraic results on Hopf algebras,
  we describe how to  attach to  any  equivariant symmetric bundle a further symmetric bundle obtained by taking fixed points. This
  construction  is crucial for the rest of the paper.  It  leads us to define (see Definition 3 of Section 3) the
  algebraic twist of $(M, q)$ by
  $( \rho, X)$. In Theorem 4.1 of Section 4, we verify that this form indeed coincides with the twist of the form
  introduced  previously. In Section 5 we use our methods to study examples of orthogonal representations of non-constant group schemes and the
 construction of twists.

\section{    Symmetric bundles and fixed points}

The goal  of this section is to describe how we  can associate  to any symmetric bundle,  equivariant under the action
of a finite and flat Hopf algebra,  a new symmetric bundle by taking {\it fixed points}. Prior to  describing  this procedure
in Subsection 2.2,
Proposition 2.9,   in the first subsection we have assembled
the main notation of the paper together with  some elementary algebraic results on Hopf algebras that we will use later on.
\vskip 0.1 truecm
\subsection {Algebraic preliminaries}

Let $R$ be a commutative noetherian integral domain in which $2$ is invertible,  with field of fractions  $K$.
 We consider  a finite,  locally free  $R$-Hopf algebra $A$ and   we denote by $A^{D}$ the
dual algebra.   We set  $A_{K}=A\otimes _{R}K$,  $ A_{K}^{D}=A^{D}\otimes _{R}K$ and  we
identify $A_{K}^{D}$ with the dual of $A_{K}$. We let $\Delta,\  \varepsilon $ and $S$ (resp. $\Delta^{D},\  \varepsilon^{D} $ and $S^{D}$) be respectively
the comultiplication, the counit and the antipode of $A$ (resp. $A^{D}$). We assume that $S^{2}=I_{A}$,  which implies that
$(S^{D})^{2}=I_{A^{D}}$. This last condition is fulfilled  when $A$ is commutative or cocommutative [C], Proposition 1.11.
 A right $A$-comodule $M$ is a finitely generated,  locally free $R$-module,  endowed with a structure map
$$\alpha _{M} : M\rightarrow M\otimes_{R} A \ $$
$$m\mapsto \sum _{(m)}m_{(0)}\otimes m_{(1)}\ .$$
Define the $R$-linear map
$$\psi _{M}: A^{D}\otimes_{R} M \rightarrow M$$
$$g\otimes m\mapsto \sum _{m} <g, m_{(1)}>m_{(0)} \ .$$
One can prove that $\psi _{M}$ defines a left $A^{D}$-module structure on $M$. Moreover, by Proposition 1.3 in [CEPT],   the association
$(M, \alpha _{M}) \rightarrow (M, \psi _{M})$ gives a bijective correspondence between the $A$-comodule and the $A^{D}$-module
structures on a $R$-module $M$. For any $A$-comodule $M$ we define  the $R$-sub-module
$$M^{A}= \{ m\in M \mid  \alpha _{M}(m)=m\otimes 1 \} \  . $$

\begin {lem} For any $A$-comodule $M$, then
$$M^{A}= \{m \in M \mid  gm=g(1_{A})m\ \    \forall g\in A^{D} \}\ .$$
\end {lem}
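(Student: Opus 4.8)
The plan is to unwind the definition of the $A^{D}$-module structure $\psi_{M}$ and then reduce the nontrivial inclusion to a one-line computation with a dual basis, after localizing at the maximal ideals of $R$. By definition of $\psi_{M}$ one has $gm=\sum_{(m)}\langle g,m_{(1)}\rangle\,m_{(0)}$ for $m\in M$ and $g\in A^{D}$, while $g(1_{A})=\langle g,1_{A}\rangle$. Hence, if $m\in M^{A}$, so that $\alpha_{M}(m)=m\otimes 1_{A}$, then $gm=\langle g,1_{A}\rangle m=g(1_{A})m$ for every $g$; this gives the inclusion $\subseteq$ with no work.

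For the reverse inclusion, suppose $gm=g(1_{A})m$ for all $g\in A^{D}$; we must show $\alpha_{M}(m)=m\otimes 1_{A}$. Since for every $R$-module $N$ the canonical map $N\to\prod_{\mathfrak{m}}N_{\mathfrak{m}}$, the product taken over the maximal ideals of $R$, is injective, and $\alpha_{M}$ is $R$-linear, it is enough to prove the identity after localizing at an arbitrary maximal ideal $\mathfrak{m}$. As $A$ is finite locally free, $(A^{D})_{\mathfrak{m}}=(A_{\mathfrak{m}})^{D}$, the localized action is the $A_{\mathfrak{m}}^{D}$-action on $M_{\mathfrak{m}}$, and since $g\mapsto g(1_{A})$ is $R$-linear the hypothesis on $m$ is inherited by $M_{\mathfrak{m}}$ over $R_{\mathfrak{m}}$; we may therefore assume $A$ is free over $R$. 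Fix an $R$-basis $e_{1},\dots,e_{n}$ of $A$, let $e_{1}^{*},\dots,e_{n}^{*}$ be the dual basis of $A^{D}$, so that $1_{A}=\sum_{i}\langle e_{i}^{*},1_{A}\rangle\,e_{i}$, and write $\alpha_{M}(m)=\sum_{i}n_{i}\otimes e_{i}$ with $n_{i}\in M$. Evaluating the hypothesis at $g=e_{j}^{*}$ gives $n_{j}=\sum_{i}\langle e_{j}^{*},e_{i}\rangle\,n_{i}=e_{j}^{*}m=e_{j}^{*}(1_{A})\,m=\langle e_{j}^{*},1_{A}\rangle\,m$ for each $j$, and therefore $\alpha_{M}(m)=\sum_{j}\langle e_{j}^{*},1_{A}\rangle\,m\otimes e_{j}=m\otimes 1_{A}$, as wanted.

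The only delicate point is the localization step --- checking that equality in $M\otimes_{R}A$ may be tested locally and that the module-theoretic hypothesis on $m$ passes to the localizations --- and this is precisely where the finiteness and local freeness of $A$ (hence of $A^{D}$) over $R$ enter; once one is reduced to $A$ free the argument is immediate. Alternatively, one can avoid localization entirely: since $A$ is finite locally free the evaluation pairing identifies $M\otimes_{R}A$ with $\mathrm{Hom}_{R}(A^{D},M)$, and under this identification $\alpha_{M}(m)$ corresponds to the map $g\mapsto gm$ and $m\otimes 1_{A}$ to the map $g\mapsto g(1_{A})m$; injectivity of the identification then yields the claim directly.
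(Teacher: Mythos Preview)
Your proof is correct and follows essentially the same route as the paper's: both express $\alpha_{M}(m)$ in terms of a dual basis and read off that its coefficients are precisely the elements $f_{i}m$. The only cosmetic difference is that the paper works globally with projective coordinates (choosing $h_{i}\in A$, $f_{i}\in A^{D}$ with $h=\sum_{i}\langle f_{i},h\rangle h_{i}$ from the isomorphism $A\otimes_{R}A^{D}\cong\mathrm{Hom}_{R}(A,A)$), so it never needs your localization step --- which, as your own alternative paragraph shows, is indeed dispensable.
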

\begin{proof}Let $M'$ denote the right hand side of the equality.  The inclusion $M^{A}\subset M'$ is immediate.
We now use
the fact that the map
$$\varphi: A\otimes_{R}A^{D}\rightarrow \mbox{Hom}_{R}(A, A), $$
with $\varphi (h\otimes f))(a)=<f, a>h$ is an isomorphism. Therefore there exist elements $\{h_{1},...,h_{n}\}$ of $A$ and
$\{f_{1},...,f_{n}\}$ of $A^{D}$ such that
$$i_{d}=\sum_{1\leq i\leq n}\varphi (h_{i}\otimes f_{i})\ .$$ This implies that
$h=\sum_{1\leq i\leq n} <f_{i}, h>h_{i}$ for any $h\in A$.  It now follows from the definitions that,  for any
$m\in M$,  we  have:
$$\alpha_{M}(m)=\sum_{(m)}m_{0}\otimes m_{(1)}=\sum_{(m)}m_{(0)}\otimes (\sum_{i}<f_{i}, m_{(1)}>h_{i})$$
$$=\sum_{i, (m)}(<f_{i}, m_{(1)}>m_{(0)})\otimes h_{i}=\sum_{i}f_{i}m\otimes h_{i}\ . $$
In particular,  for any  $m\in M'$,  we obtain that
$$\alpha_{M}(m)=\sum_{i}f_{i}m\otimes h_{i}=\sum_{i}\varepsilon ^{D}(f_{i})m\otimes h_{i}
=m\otimes \sum_{i}\varepsilon ^{D}(f_{i})h_{i}=m\otimes 1_{A}\ .$$
Hence  we have proved that $M'\subset M^{A}$.
\end{proof}
\medskip

Since any Hopf algebra is  a  left module over itself via the multiplication map,  it also has a right comodule structure    on its
 dual. Therefore it follows from the lemma that we may define the left integrals of $A$ and $A^{D}$ by the following equalities:
 $$I(A)=A^{A^{D}}=\{x\in A \ \mid ax=\varepsilon (a)x, \forall a\in A\}, $$

 $$I(A^{D})=(A^{D})^{A}=\{f\in A^{D}\ \mid  uf=\varepsilon^{D} (u)f=u(1)f, \forall u\in A^{D}\}\ .$$
 We note  that $I(A)$ is not only an $R$-submodule of $A$ but also a two-sided $A$-ideal.
 In a similar way we  may define the module
 of right integrals. A Hopf algebra is called  {\it unimodular} if the modules of left and right integrals coincide.
  A Hopf algebra is  also endowed with a right comodule structure induced by its  comultiplication. Therefore it becomes  a left module over
  the  dual algebra as explained previously. The description of a finite Hopf $R$-algebra as a module over its dual holds in general.
  A  theorem of Larson and Sweedler ( see [Sw], Theorem 5.1.3) states
 that for any finite Hopf $R$-algebra the action of $A^{D}$ on $A$ induces an isomorphism
 $$A\simeq A^{D}\otimes_{R}I(A)\ .$$
  This theorem implies that $I(A)$ and $I(A^{D})$ are rank one projective $R$-modules, [C], Corollary 3.4. In the particular case
 where $I(A)$ is a free $R$-module with $\theta $ as a basis, then $A$ is a free $A^{D}$-module on the left integral $\theta$. This is
 always the case when $R$ is a principal ideal domain.
   \begin{lem}
The following properties  are equivalent:

\item i) The module of left integrals of $A$ is a free rank one $R$-module.

\item ii) The module of left integrals of $A^{D}$ is a free rank one $R$-module.

\item iii) There exists  $\theta\ \in A$ and $\theta^{D}\ \in A^{D}$ such that $\theta^{D} \theta =1_{A}.$
\end{lem}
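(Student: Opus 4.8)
The plan is to prove the cycle of implications (i) $\Rightarrow$ (iii) $\Rightarrow$ (ii) and, by symmetry (exchanging the roles of $A$ and $A^D$, which is legitimate since $A^D$ is again a finite locally free Hopf algebra with $(S^D)^2 = I_{A^D}$ and $(A^D)^D \cong A$), also (ii) $\Rightarrow$ (iii) $\Rightarrow$ (i), so that all three are equivalent. The two genuinely substantive steps are (i) $\Rightarrow$ (iii) and (iii) $\Rightarrow$ (ii); the reverse directions come for free from the symmetry once these are in hand.

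For (i) $\Rightarrow$ (iii): assume $I(A) = R\theta$ is free of rank one. By the Larson--Sweedler theorem quoted above, the action map induces an isomorphism $A \simeq A^D \otimes_R I(A)$, and since $I(A) = R\theta$ this says precisely that $A$ is a free $A^D$-module with basis $\theta$ (the excerpt makes exactly this remark). In particular $1_A$ lies in $A = A^D \cdot \theta$, so there exists $\theta^D \in A^D$ with $\theta^D\theta = 1_A$, which is (iii).

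For (iii) $\Rightarrow$ (ii): suppose $\theta \in A$ and $\theta^D \in A^D$ satisfy $\theta^D \theta = 1_A$. First one checks that $\theta^D$ is a left integral of $A^D$, i.e. $u\theta^D = \varepsilon^D(u)\theta^D$ for all $u \in A^D$. The idea is that the $A^D$-module map $A^D \to A$, $u \mapsto u\theta$, combined with $\theta^D\theta = 1_A$, forces enough rigidity: one shows $u \mapsto u\theta$ is injective (since $I(A)$ is rank one projective and $A \simeq A^D \otimes_R I(A)$ by Larson--Sweedler, so $\mathrm{rk}\, A = \mathrm{rk}\, A^D$ and the surjection, being surjective between projectives of the same rank onto a domain, is an isomorphism), hence from $(u\theta^D - \varepsilon^D(u)\theta^D)\theta = u(\theta^D\theta) - \varepsilon^D(u)(\theta^D\theta) = u\cdot 1_A - \varepsilon^D(u)\cdot 1_A$ one wants the right side to vanish; here one uses that $1_A$ is a left integral of $A^D$ acting on $A$, namely $u\cdot 1_A = \varepsilon^D(u) 1_A$, which is just the statement that $1_A \in A^{A^D} \cap (\text{something})$ — more precisely it follows from $\Delta(1_A) = 1_A \otimes 1_A$ that $A^D$ acts on $1_A$ through $\varepsilon^D$. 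Then injectivity of $\cdot\theta$ gives $u\theta^D = \varepsilon^D(u)\theta^D$, so $\theta^D \in I(A^D)$. It remains to see $I(A^D) = R\theta^D$ is free of rank one: we already know $I(A^D)$ is rank one projective, and $R\theta^D \subseteq I(A^D)$; evaluating a generator against $\theta$ and using $\theta^D\theta = 1_A \neq 0$ shows $\theta^D$ is nonzero and in fact generates, because any $\phi \in I(A^D)$ satisfies $\phi\theta \in I(A) = R\theta$ (integrals act into integrals), say $\phi\theta = r\theta$; then $\phi = \phi(\theta^D\theta) = (\phi\theta^D)\theta$ and using that $\phi\theta^D$ lies in $R$ (as $\phi\theta^D = \varepsilon^D(\phi)\theta^D \cdot (\text{scalar})$ — one checks the product of two left integrals is a scalar multiple of either) one concludes $\phi \in R\theta^D$.

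The main obstacle I anticipate is the bookkeeping in (iii) $\Rightarrow$ (ii), specifically pinning down that the surjection $A^D \to A$ given by $u \mapsto u\theta$ is actually an isomorphism so that the ``cancel $\theta$'' arguments are valid: this needs the Larson--Sweedler isomorphism to transfer rank information and the fact that $R$ is an integral domain (a surjection of projective modules of equal rank over a noetherian domain is an isomorphism, seen by localizing at the generic point and using that the kernel is projective hence torsion-free but becomes zero generically). Everything else — that integrals multiply into integrals, that $A^D$ acts on $1_A$ via $\varepsilon^D$, and the final freeness of $I(A^D)$ — is a routine unwinding of the definitions recorded before Lemma 2.1 and the Larson--Sweedler theorem. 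Once (i) $\Leftrightarrow$ (iii) is established, the symmetry $A \leftrightarrow A^D$ immediately yields (ii) $\Leftrightarrow$ (iii), completing the equivalence.
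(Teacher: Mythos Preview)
Your argument for (iii) $\Rightarrow$ (ii) has a fatal gap. Under your literal reading of (iii) --- $\theta \in A$ and $\theta^D \in A^D$ arbitrary with $\theta^D\theta = 1_A$ --- condition (iii) is \emph{always} satisfied: take $\theta = 1_A$ and $\theta^D = \varepsilon = 1_{A^D}$, and recall that $\varepsilon$ acts as the identity on $A$. Thus your (iii) $\Rightarrow$ (ii) would force $I(A^D)$ to be free for every finite locally free Hopf algebra over $R$, which is false once $\mathrm{Pic}(R)\neq 0$. The paper's notation $\theta,\theta^D$, together with its later usage (e.g.\ the proof of Lemma~2.4 and Corollary~3.5), makes clear the intended meaning: $\theta\in I(A)$ and $\theta^D\in I(A^D)$.

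Two of your specific steps break down. First, the injectivity of $u\mapsto u\theta$ fails for $\theta=1_A$, where the map collapses to $u\mapsto \varepsilon^D(u)1_A$ with kernel $\ker\varepsilon^D$; your justification (``the surjection, being surjective between projectives of the same rank\dots'') invokes a surjectivity that is never established. Second, the claim ``integrals act into integrals'' ($\phi\theta\in I(A)$ for $\phi\in I(A^D)$, $\theta\in I(A)$) is false: for $A=\mathrm{Map}(\mathbf{Z}/2,K)$ one has $(1+\sigma)\cdot\delta_1=\delta_1+\delta_\sigma=1_A\notin I(A)=K\delta_1$.

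The paper instead proves (i) $\Rightarrow$ (ii) directly. Starting from a free generator $\theta$ of $I(A)$, Larson--Sweedler gives $A=A^D\theta$ as a \emph{free} $A^D$-module, so there is a unique $\theta^D$ with $\theta^D\theta=1_A$; because $\cdot\,\theta$ is now genuinely an isomorphism, the identity $(u\theta^D)\theta=u\cdot 1_A=\varepsilon^D(u)\,1_A=(\varepsilon^D(u)\theta^D)\theta$ forces $\theta^D\in I(A^D)$. For generation, the paper does not use your ``integrals into integrals'' idea but a divisibility trick: for nonzero $u\in I(A^D)$, one-dimensionality of $I(A^D_K)$ gives $mu=n\theta^D$ with $m,n\in R\setminus\{0\}$; setting $t=u\theta\in A$ and applying $\varepsilon$ to $mt=n(\theta^D\theta)=n$ yields $m\mid n$, hence $u\in R\theta^D$. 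The remaining implications are declared routine; (ii) $\Rightarrow$ (i) is by the $A\leftrightarrow A^D$ symmetry, and with the corrected reading of (iii) the equivalence with (iii) then follows easily.
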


\begin{proof} We  show  that $i) $ implies $ii)$.  The rest of the proof  is left to the reader.
Let $\theta $ be a basis of $I=I(A)$. Since  $A$ is a free $A^{D}$-module on  $\theta$ there exists a unique $\theta^{D}$ in
$A^{D}$ such that $1_{A}=\theta^{D}\theta$.  For any $u$ of $A^{D}$ we have the equalities:
$$(u\theta^{D})\theta=u(\theta^{D}\theta)=u1_{A}=\varepsilon ^{D}(u)1_{A}=(\varepsilon^{D} (u)\theta^{D})\theta\ .$$
This implies  that $u\theta^{D}=\varepsilon^{D}(u)\theta^{D}$ and hence  that $\theta^{D}$ is a left integral of $A^{D}$.
Let $u$ be a non-zero left integral of $A^{D}$. Since $A^{D}$ is a projective
 $R$-module, it follows that $I^{D}$ is contained in $I^{D}_{K}=I(A^{D}_{K})$, which is a $K$-vector space of dimension one.
 Therefore there exist non-zero elements $m$ and $n$ of $R$ such that $mu=n\theta^{D}$. We set $t=u\theta$. This is an element of $A$.
We observe that
$$mt=(mu)\theta=n(\theta^{D}\theta)=n\ .$$
It follows that $n=m\varepsilon (t)$, and so $m$ divides $n$ in $R$, and $u$ is a multiple of $\theta^{D}$. We conclude that
$\theta^{D}$ is a free generator of $I(A^{D})$.
\end{proof}

\begin {lem} The following properties are equivalent:

\item i) There exists  $\lambda \in \{\pm 1\}$ and a non-zero element $a$ of $I(A)$ such that
$$S(a)=\lambda a\ . $$

\item ii) There exists $\lambda \in \{\pm 1\}$ such that

$$S(x)=\lambda x, \ \forall x\in I(A)\ . $$

\item iii) $A$ is unimodular.
\smallskip
\vskip 0.1 truecm

\noindent Moreover if $A_{K}$ is separable,  then $S(x)= x, \ \forall x\in I(A)$.

\end {lem}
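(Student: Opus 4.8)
The plan is to prove the equivalences by the cycle (ii) $\Rightarrow$ (i) $\Rightarrow$ (iii) $\Rightarrow$ (ii), and then handle the separable case on its own. Throughout I will use the standard fact that the antipode carries left integrals to right integrals and vice versa: applying $S$ to $xa=\varepsilon(x)a$ and using that $S$ is an algebra anti-homomorphism with $\varepsilon\circ S=\varepsilon$ and $S^{2}=I_{A}$ (so $S$ is bijective) gives $S(a)y=\varepsilon(y)S(a)$ for all $y\in A$, i.e. $S(a)$ lies in the module $I_{r}(A)$ of right integrals; symmetrically $S(I_{r}(A))\subseteq I(A)$, and since $S^{2}=I_{A}$ these inclusions upgrade to a bijection $S : I(A)\to I_{r}(A)$. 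I will also use, from the Larson--Sweedler theorem recalled above, that $I(A_{K})$ is a one-dimensional $K$-vector space and that $I(A)\subseteq I(A_{K})$ (the integral is the kernel of an $R$-linear map, so it commutes with the flat base change $R\to K$, and $I(A)$ is $R$-torsion-free).

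The implication (ii) $\Rightarrow$ (i) is immediate, since $I(A)$ is nonzero (it is rank one projective, and embeds into $I(A_{K})$), so one takes any nonzero $a\in I(A)$. For (iii) $\Rightarrow$ (ii): if $A$ is unimodular then $I(A)=I_{r}(A)$, so by the first paragraph $S$ restricts to an $R$-module endomorphism of $I(A)$; as $I(A)$ is invertible, $\mathrm{End}_{R}(I(A))\cong R$ canonically via $r\mapsto r\cdot\mathrm{id}$, hence $S|_{I(A)}$ is multiplication by some $\lambda\in R$, and $S^{2}=I_{A}$ forces $\lambda^{2}=1$, so $\lambda=\pm1$ because $R$ is a domain. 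For (i) $\Rightarrow$ (iii): given nonzero $a\in I(A)$ with $S(a)=\lambda a$, $\lambda\in\{\pm1\}$, the one-dimensional space $I(A_{K})$ contains $a\neq0$, so $I(A_{K})=Ka$ and in particular $I(A)\subseteq Ka$; since $S$ is $K$-linear with $S(a)=\lambda a$, it acts on $Ka$ as multiplication by $\lambda$, whence $S(I(A))=\lambda I(A)=I(A)$. Combined with $S(I(A))=I_{r}(A)$ from the first paragraph, this yields $I(A)=I_{r}(A)$, i.e. $A$ is unimodular.

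For the final assertion, assume $A_{K}$ is separable, hence a semisimple $K$-algebra; by Maschke's theorem for Hopf algebras there is a left integral $\Lambda\in A_{K}$ with $\varepsilon(\Lambda)=1$. For any $h\in A_{K}$ the element $\Lambda h$ is again a left integral (because $g(\Lambda h)=(g\Lambda)h=\varepsilon(g)\Lambda h$), so $\Lambda h=c\Lambda$ for some $c\in K$; applying $\varepsilon$ and using $\varepsilon(\Lambda)=1$ gives $c=\varepsilon(h)$, hence $\Lambda h=\varepsilon(h)\Lambda$. Thus $\Lambda$ is a two-sided integral and $A_{K}$ is unimodular with $I_{r}(A_{K})=K\Lambda$. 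Then $S(\Lambda)$ is a right integral, so $S(\Lambda)=d\Lambda$ for some $d\in K$, and applying $\varepsilon$ together with $\varepsilon\circ S=\varepsilon$ gives $d=1$, i.e. $S(\Lambda)=\Lambda$. Finally any $x\in I(A)\subseteq I(A_{K})=K\Lambda$ is of the form $c\Lambda$, so $S(x)=cS(\Lambda)=c\Lambda=x$; this in particular reproves, via (i) $\Rightarrow$ (iii) with $\lambda=1$, that $A$ is then unimodular.

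The only points that need a little care are the canonical identification $\mathrm{End}_{R}(I(A))\cong R$ for the invertible module $I(A)$ — this is what turns $S|_{I(A)}$ into an honest scalar that $S^{2}=I_{A}$ then pins down to $\pm1$ — and the compatibility of the integral module with the base change $R\to K$; I do not anticipate any genuine obstacle beyond this bookkeeping.
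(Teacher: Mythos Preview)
Your proof of the equivalence of (i)--(iii) is correct and essentially the same as the paper's: both rest on the fact that $I(A_K)$ is one-dimensional over $K$, that $S$ exchanges left and right integrals, and that $S^{2}=I_{A}$ forces any scalar action of $S$ on this line to be $\pm1$. Your use of $\mathrm{End}_{R}(I(A))\cong R$ for an invertible module is a clean way to phrase the step (iii) $\Rightarrow$ (ii), where the paper instead passes directly to $I(A_K)$; these are equivalent.

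For the separable case your argument differs from the paper's and is somewhat more direct. The paper invokes the ideal decomposition $A_{K}=I(A_{K})\oplus \mathrm{Ker}(\varepsilon)$, writes $S(x)=rx+y$, uses $\varepsilon\circ S=\varepsilon$ to obtain $r=1$, and then compares two expressions for $S(x)x$ (one from the left-integral property of $x$, one from the right-integral property of $S(x)$) to cancel $\varepsilon(x)\neq 0$ and conclude $S(x)=x$. You instead take a left integral $\Lambda$ with $\varepsilon(\Lambda)=1$ (Maschke), observe in one line that $\Lambda h=\varepsilon(h)\Lambda$, so $\Lambda$ is two-sided, and then apply $\varepsilon$ to $S(\Lambda)=d\Lambda$ to get $d=1$. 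Both arguments ultimately exploit that in the separable case $\varepsilon$ does not kill the integral, but yours avoids the auxiliary product $S(x)x$ and the appeal to the ideal decomposition; the paper's route, on the other hand, makes the role of that decomposition explicit, which it reuses later (e.g.\ in Proposition~3.2).
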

\begin{proof}The equivalence between $i)$ and $ii) $ follows from the fact that  $I(A_{K})$ is a $K$-vector space of dimension $1$.
 Let $I'(A)$ be the set of right integrals of $A$. For any element $u\in A$ and $x\in I(A)$ we know that
$ux=\varepsilon (u)x$. This implies that  $S(ux)=S(x)S(u)=\varepsilon (u)S(x)$. Since $S$ is bijective we deduce that
$$I'(A)=\{S(x),\ x\in I(A) \}\ . $$ Therefore $ii)$ implies $iii)$. We now assume $iii)$. Let $a$ be a non-zero element of $I(A)$. Since
$a$ and $S(a)$ both belong to $I(A)$ and since $I(A_{K})$ is of dimension one  there exists $\lambda \in K$ such that
$S(a)=\lambda a$. Using that $S^{2}=I$ we deduce that $\lambda \in \{\pm 1\}$ which proves $i)$. Finally suppose that $A_{K}$ is
semi-simple. Our aim is to prove that $S(x)=x$ for any $x \in I(A)$. Since  $I(A)\subset I(A)\otimes _{R}K=I(A_{K})\ $   it suffices to prove that $S(x)=x$  for any $x$ in $I(A_{K})$. It
 follows from [Sw], Theorem 5.1.8,  that $A_{K}=I(A_{K})\oplus \mbox{Ker}(\varepsilon )$ as a direct sum of
 $A_{K}$-ideals. Let   $x$ be a non-zero element of $I(A_{K})$. Then $S(x)$ can be decomposed as a sum $rx+y$ with  $r\in K$ and
 $y\in \mbox{Ker}(\varepsilon )$.  Since $ \varepsilon\circ S=\varepsilon$ we deduce that $r=1$. Therefore
 $S(x)x=x^{2}+yx$. We observe that  $yx\in I(A)\cap \mbox{Ker}(\varepsilon )$. Thus  $yx=0$ and
 $S(x)x=\varepsilon(x)x=\varepsilon(x) S(x)$. We conclude that $x=S(x)$ as required.
 \end{proof}

\vskip 0.1 truecm

\noindent {\bf Definition 1}   {\it A Hopf algebra satisfies hypotheses ${\bf H_{1}}$ when  it is unimodular,
 when its module of integrals  is a free $R$-module and when the restriction of $S$ to the module of integrals is the identity map. }

\vskip 0.1 truecm
\noindent {\bf Remark } When $K$ is of characteristic $0$  and $A_{K}$ is commutative, it  follows from a theorem of Cartier that $A_{K}$
 is separable. Hence in this case we deduce from Lemma 2.3 that $A_{K}$ and therefore $A$ is unimodular and that
$S(x)=x$ for any $x\ \in I(A)$. Therefore any finite, commutative and locally free $R$-Hopf algebra, where $R$ is a
principal ideal domain of characteristic $0$, satisfies ${\bf H_{1} }$.  Nevertheless it is easy to construct Hopf algebras which are not separable
but which satisfy ${\bf H_{1} }$.  It suffices for instance to consider $A=k[\Gamma ]$ where $k$ is a field of characteristic $p$ and
$\Gamma $  a finite group of order divisible by $p$,  endowed with its usual Hopf algebra structure. We know  from Maschke's Theorem
that $A$ is not separable. However, we may easily  check that $I(A)=k\omega $ where $\omega =
\sum_{\gamma \in \Gamma}\gamma$. Since $S(\omega )=\omega $ we deduce that $A$ satisfies ${\bf H_{1}}$. When $A_{K}$ is
not separable, $I(A_{k})$ is contained in $\mbox{Ker}(\varepsilon)$ and thus $I(A_{k})^{2}=I(A)^{2}=\{0\}$.
\begin {lem} Assume that $A$ is  commutative,  $I(A)$ is free over $R$ and $A_{K}$ is separable. Then $A$ and $A^{D}$
both satisfy ${\bf H_{1}}$.

\end {lem}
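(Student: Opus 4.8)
The plan is to treat $A$ and $A^{D}$ in turn. For $A$ the two nontrivial requirements of Definition 1 — unimodularity and $S|_{I(A)}=\mathrm{id}$ — are supplied directly by Lemma 2.3: since $A_{K}$ is separable, the ``moreover'' clause of that lemma gives $S(x)=x$ for every $x\in I(A)$, and then the implication ii) $\Rightarrow$ iii) of the same lemma (with $\lambda=1$) shows that $A$ is unimodular. As $I(A)$ is free of rank one over $R$ by hypothesis, Definition 1 shows that $A$ satisfies ${\bf H}_{1}$.

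For $A^{D}$ one cannot argue in the same way, because $A^{D}_{K}$ need not be separable, and this is where the work lies. First, $I(A^{D})$ is free of rank one over $R$ by the equivalence i) $\Leftrightarrow$ ii) of Lemma 2.2. Fix a basis $\theta$ of $I(A)$ and let $\theta^{D}\in A^{D}$ be the element with $\theta^{D}\theta=1_{A}$ produced in the proof of Lemma 2.2; it is a basis of $I(A^{D})$. The Hopf algebra $A^{D}$ satisfies the standing hypotheses of the section (in particular $(S^{D})^{2}=I_{A^{D}}$), so Lemma 2.3 applies to it; it therefore suffices to exhibit condition i) for $A^{D}$ with $\lambda=1$, i.e. to prove the single identity $S^{D}(\theta^{D})=\theta^{D}$. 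Granting this, i) $\Rightarrow$ iii) shows $A^{D}$ unimodular and i) $\Rightarrow$ ii) shows that $S^{D}$ is the identity on $I(A^{D})$, which together with the freeness of $I(A^{D})$ gives ${\bf H}_{1}$ for $A^{D}$.

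To prove $S^{D}(\theta^{D})=\theta^{D}$ I would pass to a splitting field. Since $A_{K}$ is commutative and separable, it is a product of finite separable field extensions of $K$, so there is a finite extension $K'/K$ with $A\otimes_{R}K'\cong (K')^{n}$ as $K'$-algebras, where $n=\dim_{K}A_{K}$. Then $\mathrm{Spec}(A\otimes_{R}K')$ is a constant finite group scheme; writing $\Gamma$ for the underlying finite group, one obtains Hopf algebra isomorphisms $A\otimes_{R}K'\cong\mathrm{Map}(\Gamma,K')$ and, dually, $A^{D}\otimes_{R}K'\cong K'[\Gamma]$, under which the antipode $S^{D}\otimes\mathrm{id}_{K'}$ becomes the map $\gamma\mapsto\gamma^{-1}$ of the group algebra. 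Now the module of integrals of $K'[\Gamma]$ is $K'\omega$ with $\omega=\sum_{\gamma\in\Gamma}\gamma$, and $\omega$ is visibly fixed by $\gamma\mapsto\gamma^{-1}$. Since $A^{D}$ is $R$-torsion free, the natural map $A^{D}\hookrightarrow A^{D}\otimes_{R}K'$ is injective, and it sends $\theta^{D}$ to a generator of the line of integrals $K'\omega$, hence to $c\omega$ for some $c\in(K')^{\times}$; thus the image of $\theta^{D}$ is fixed by $S^{D}\otimes\mathrm{id}_{K'}$, and injectivity gives $S^{D}(\theta^{D})=\theta^{D}$.

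The decisive point — the only real obstacle — is this last reduction: one must use the commutativity of $A$ to recognise $A^{D}$, after extension of scalars to a splitting field, as an ordinary group algebra, for which unimodularity and the $S$-invariance of the norm integral $\sum_{\gamma}\gamma$ are immediate, and then descend the resulting identity $S^{D}(\theta^{D})=\theta^{D}$ back to $R$ so as to feed it into Lemma 2.3. Everything else is a direct application of Lemmas 2.2 and 2.3.
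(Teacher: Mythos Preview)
Your proof is correct and follows the same overall strategy as the paper: handle $A$ directly via Lemma~2.3, obtain freeness of $I(A^{D})$ from Lemma~2.2, and pass to a splitting field $K'$ with $A^{D}\otimes_{R}K'\cong K'[\Gamma]$. The only real difference lies in how the identity $S^{D}(\theta^{D})=\theta^{D}$ is secured. You argue that the image of $\theta^{D}$ in $K'[\Gamma]$ is a nonzero left integral, hence a scalar multiple of $\omega=\sum_{\gamma\in\Gamma}\gamma$, which is visibly antipode-invariant, and then descend by injectivity of $A^{D}\hookrightarrow A^{D}\otimes_{R}K'$. The paper instead first reads off unimodularity of $A^{D}$ from that of $K'[\Gamma]$, so that Lemma~2.3 yields only $S^{D}(\theta^{D})=\pm\theta^{D}$, and then pins down the sign by the pairing computation $<S^{D}(\theta^{D}),\theta>=<\theta^{D},S(\theta)>=<\theta^{D},\theta>=1$, invoking the already established fact $S(\theta)=\theta$ for $A$. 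Your route is slightly more economical; the paper's has the minor virtue of showing explicitly how the antipode identity on $A^{D}$ is inherited from that on $A$ through the duality pairing rather than through the explicit group-algebra model.
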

\begin{proof} It follows immediately from Lemma 2.3 that $A$ satisfies ${\bf H_{1}}$ and thus,  from Lemma 2.2,  that $I(A^{D})$ is
$R$-free.  Since
$A_{K}$ is separable there exists  a finite extension $K'/K$ such that   $A_{K'}=A_{K}\otimes_{K}K'$ is the  algebra
$\mbox {Map}(\Gamma , K')$, where $\Gamma $ is a finite  group, endowed with its natural structure of Hopf algebra.
Since $K'$ is flat over $R$ and $A^{D}$ is finitely generated,  the  map
$$\alpha _{K'}: A^{D}\otimes_{R}K'\simeq A_{K'}^{D}$$

$$f\otimes \lambda\rightarrow  f\lambda$$
is an isomorphism of $K'$-vector spaces. We may easily check that it respects the algebra and coalgebra structure of both sides. Thus
we may identify the Hopf algebras $ A^{D}\otimes_{R}K'$ and $K'[\Gamma ]$ and therefore  $I(A^{D})$
with $I(K'[\Gamma ])\cap A^{D}$.
The unimodularity of $A^{D}$ follows from the unimodularity of $K'[\Gamma ]$. We now want to prove that the restriction of $S^{D}$ to
$I(A^{D})$ is the identity map. Let $\theta^{D}\in A^{D}$ and $\theta \in A$ such that $\theta^{D}\theta= 1_{A}$.
Let $\Delta (\theta)=\sum \theta_{(0)}\otimes \theta_{(1)}$. We deduce from the definitions that
$$\varepsilon (\theta^{D}\theta)=1_{R}=\sum \varepsilon (\theta_{0})<\theta^{D}, \theta_{(1)}>=<\theta^{D}, \theta>\ $$
and
$$\varepsilon (S^{D}(\theta^{D})\theta)=<\theta^{D}, S(\theta)>=<\theta^{D}, \theta>\ . $$ Because $A^{D}$ is unimodular
we know by Lemma 2.3 that $S^{D}(\theta^{D})$ is either $\theta^{D}$ or $-\theta^{D}$. Since the characteristic of $K$ is different from $2$
we deduce from the previous equalities that $S^{D}(\theta^{D})=\theta^{D}$. We have proved,  as required,  that $A^{D}$ satisfies
${\bf H_{1}}$.
\end{proof}
\vskip 0.1 truecm

Let $M$ be an $A$-comodule and  let  $M_{A}$  be the largest quotient of $M$ on which
$A^{D}$ acts trivially, so that $M_{A}=M/ker(\varepsilon^D)M$.

\begin{lem} \label{fixedtrace}
Suppose  that $A^{D}$ satisfies ${\bf H_{1}}$ and that  $\theta^{D}$ denotes an  $R$-basis of $I(A^{D})$. Let  $M$ be  a projective $A^{D}$-module. Then

\begin{itemize}
\item[i)]  $M^{A}=\theta^{D}M$,
\item[ii)]$ M^{A}$ is a locally free $R$-module,
\item[iii)] The map $m \mapsto \theta^{D}m $ induces an isomorphism of $R$-modules from $M_A$ onto $M^{A}$.

\end{itemize}

\end{lem}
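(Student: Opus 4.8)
The plan is to study the $R$-linear map $t_M\colon M\to M$, $m\mapsto\theta^D m$, and to deduce all three assertions from the projectivity of $M$ by reducing to the single case of the left regular module $M=A^D$, where everything can be computed directly from the counit.

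I first collect the elementary facts. Since $A^D$ satisfies ${\bf H_{1}}$ it is unimodular, so the basis $\theta^D$ of $I(A^D)$ is simultaneously a left and a right integral, whence $u\theta^D=\theta^D u=\varepsilon^D(u)\theta^D$ for all $u\in A^D$. Using the characterisation of fixed points in Lemma 2.1, the left-integral relation gives $u(\theta^D m)=(u\theta^D)m=\varepsilon^D(u)(\theta^D m)$ for every $m\in M$, so $t_M(M)\subseteq M^A$; the right-integral relation gives $\theta^D(um)=(\theta^D u)m=0$ for $u\in\ker(\varepsilon^D)$, so $\ker(\varepsilon^D)M\subseteq\ker(t_M)$ and $t_M$ descends to an $R$-linear map $\bar t_M\colon M_A\to M^A$. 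Thus the lemma asserts exactly that $\bar t_M$ is an isomorphism, that its image $\theta^D M$ is all of $M^A$, and that $M^A$ is $R$-locally free. I also record that $M\mapsto M^A$, $M\mapsto M_A$ and $M\mapsto\theta^D M$ are additive in $M$ (they commute with arbitrary direct sums) and that $\bar t_{M\oplus N}=\bar t_M\oplus\bar t_N$.

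Next I settle the left regular module $M=A^D$. By Lemma 2.1, $M^A=\{x\in A^D:ux=\varepsilon^D(u)x\ \forall u\in A^D\}=I(A^D)=R\theta^D$; since $\ker(\varepsilon^D)$ is a two-sided ideal, $M_A=A^D/\ker(\varepsilon^D)$, which $\varepsilon^D$ identifies with $R$; and the right-integral relation gives $\theta^D A^D=R\theta^D$. Hence $\theta^D M=M^A$, and under these identifications $\bar t_M$ is the identity map of $R$, so the lemma holds for $M=A^D$. By the additivity noted above it then holds for every free $A^D$-module $F=(A^D)^{(I)}$: there $F^A=(R\theta^D)^{(I)}$ is $R$-free, $F_A\cong R^{(I)}$, and $\bar t_F$ is an isomorphism.

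Finally I pass to an arbitrary projective $M$ by the standard direct-summand device: choose an $A^D$-module $N$ with $F:=M\oplus N$ free. Applying the additive constructions gives $F^A=M^A\oplus N^A$, $F_A=M_A\oplus N_A$ and $\bar t_F=\bar t_M\oplus\bar t_N$. Since $\theta^D M\oplus\theta^D N=\theta^D F=F^A=M^A\oplus N^A$ while $\theta^D M\subseteq M^A$ and $\theta^D N\subseteq N^A$, equality is forced in each factor, proving i); and since the isomorphism $\bar t_F$ decomposes as $\bar t_M\oplus\bar t_N$, each of $\bar t_M$, $\bar t_N$ is an isomorphism, proving iii). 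For ii), $M^A$ is a direct $R$-summand of the $R$-free module $F^A$, hence $R$-projective; and since $M$ is finitely generated over $A^D$ with $A^D$ finite over the noetherian ring $R$, the submodule $M^A$ of $M$ is finitely generated over $R$, so $M^A$ is finitely generated projective, that is, locally free, over $R$. The genuinely non-obvious assertions here are the inclusion $M^A\subseteq\theta^D M$ and the injectivity of $\bar t_M$, both of which fail for a non-projective $M$; the summand device reduces them to the transparent regular case $M=A^D$, where hypothesis ${\bf H_{1}}$ is used only through the unimodularity of $A^D$ and the $R$-freeness of $I(A^D)$, and this is what lets us sidestep the full Larson--Sweedler structure theorem for Hopf modules.
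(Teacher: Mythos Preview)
Your proof is correct and follows essentially the same strategy as the paper: reduce via the direct-summand device to the free case and then to the single module $M=A^{D}$, where $M^{A}=I(A^{D})=R\theta^{D}=\theta^{D}A^{D}$ and the kernel of $m\mapsto\theta^{D}m$ is identified with $\ker(\varepsilon^{D})$ by the right-integral relation. Your write-up is more explicit than the paper's about the additivity of the constructions and about why the summand trick actually transfers all three conclusions, but the underlying argument is the same.
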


\begin{proof} We first observe that we can reduce to the case where $M$ is $A^{D}$-free and so  is a direct sum of copies of
$A^{D}$. Therefore, in order  to prove  the lemma, we may  assume that  $M=A^{D}$.
In this case it follows from the very definition of the set of integrals  that
$M^{A}=(A^{D})^{A}=I(A^{D})=\theta^{D}A^{D}=\theta^{D}R$ which proves i) and ii)  of the lemma.
Moreover,  for $g\in A^{D}$,  the equality $\theta^{D}g=0$ is equivalent to $\theta^{D}g(1)=0$ and thus to $g(1)=0$ since
$A^{D}$ is $R$-torsion free. We then deduce that the kernel of the $R$-module homomorphism  $m\mapsto \theta^{D}m$ is
the submodule $Ker(\varepsilon^{D})M$. Therefore it induces,  as required,  an isomorphism from $M_{A}$ onto $M^{A}$.
\end{proof}
\vskip 0.1 truecm
Let $(M, \alpha _{M})$ and $(N, \alpha _{N})$ be $A$-comodules. We  shall  define a comodule structure on $M\otimes N$ by
considering
$$\alpha_{M, N} :M\otimes N \stackrel{\alpha_{M}\otimes \alpha_{N}}{\longrightarrow }  M\otimes A\otimes N\otimes A
\simeq M\otimes N\otimes A\otimes A\stackrel{Id\otimes mult}{\longrightarrow} M\otimes N\otimes A  \ .$$
The $A^{D}$-module structure associated to this comodule structure is given by:
$$g (m\otimes n)= \sum_{(g)}g_{(0)}m\otimes g_{(1)}n \ \  , \forall g\in A^{D}, m\in M, n\in N\ , $$
where $\Delta _{D}(g)=\sum_{(g)}g_{(0)}\otimes g_{(1)}$. We  say that {\it $M\otimes N$ is endowed with the diagonal action of
$A^{D}$}.
\vskip 0.1 truecm

We conclude  this section by proving a result  which generalizes to a large family of    Hopf algebras a theorem well known  when
$A=\mbox {Map}(\Gamma , R)$ and $A^{D}=R[\Gamma ]$, [Mc] Corollary 3.3,
p.145 and p.196.

\begin{prop} \label{again}
Let $A$ be   a Hopf algebra over $R$.   Let
$M$ and $N$ be $A^{D}$-modules. Assume that $M$ and $N$ are both projective $R$-modules and that  either $M$ or $N$ is
projective as an $A^{D}$-module. Then $M\otimes_R N$ endowed with the diagonal action of $A^{D}$  is a projective $A^{D}$-module.
\end{prop}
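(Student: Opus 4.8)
The plan is to reduce to the classical case $A^D = R[\Gamma]$ via the "untwisting" (Frobenius reciprocity / shearing) isomorphism, then lift it. Without loss of generality assume that $N$ is projective as an $A^D$-module; since a projective module is a direct summand of a free one, and the diagonal action commutes with direct sums, we may assume $N = A^D$ itself (as a left $A^D$-module). So it suffices to prove: if $M$ is $R$-projective, then $M \otimes_R A^D$ with the diagonal $A^D$-action is a projective $A^D$-module. I would prove this by exhibiting an explicit $A^D$-linear isomorphism between $M \otimes_R A^D$ with the diagonal action and $M_0 \otimes_R A^D$, where $M_0$ denotes the same underlying $R$-module $M$ but with $A^D$ acting only on the second factor (the "induced" or "free" action $g\cdot(m\otimes h) = m \otimes gh$). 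The latter module is manifestly $A^D$-projective, being a direct sum of copies of $A^D$ indexed by an $R$-basis (locally) of $M$, or more precisely isomorphic to $A^D \otimes_R M$ with $A^D$ acting on the left factor, and $M$ being $R$-projective makes this a direct summand of a free $A^D$-module.

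The key step is the construction of the shearing map. Define $\Phi : M \otimes_R A^D \to M \otimes_R A^D$ by
$$\Phi(m \otimes g) = \sum_{(g)} g_{(0)} m \otimes g_{(1)},$$
using the comultiplication $\Delta^D(g) = \sum_{(g)} g_{(0)} \otimes g_{(1)}$ and the given $A^D$-action on $M$. Here the source carries the "free" action on the second factor and the target carries the diagonal action; one checks that $\Phi$ is $A^D$-linear by a direct coassociativity computation. Its inverse is
$$\Psi(m \otimes g) = \sum_{(g)} S^D(g_{(0)}) m \otimes g_{(1)},$$
where $S^D$ is the antipode of $A^D$; that $\Psi \circ \Phi = \Phi \circ \Psi = \mathrm{Id}$ follows from the antipode axioms $\sum_{(g)} S^D(g_{(0)}) g_{(1)} = \varepsilon^D(g) 1 = \sum_{(g)} g_{(0)} S^D(g_{(1)})$ together with the coassociativity of $\Delta^D$ and the counit axiom. (This is exactly the computation that in the group-algebra case $A^D = R[\Gamma]$ specializes to $\gamma \otimes m \mapsto \gamma \otimes \gamma^{-1} m$, which is the classical proof in [Mc].) Note that the existence and good behaviour of $S^D$ are guaranteed: $A$ is a finite locally free Hopf algebra, so $A^D$ is too and has an antipode, and no hypothesis ${\bf H_1}$ or commutativity is needed for this argument — only the Hopf algebra structure of $A^D$.

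Finally, since $M$ is $R$-projective, $M \otimes_R A^D$ with the free action on the $A^D$ factor is a direct summand of $A^D \otimes_R F$ for some free $R$-module $F$, hence an $A^D$-direct summand of a free $A^D$-module, hence $A^D$-projective; transporting along the isomorphism $\Phi$ shows $M \otimes_R A^D$ with the diagonal action is $A^D$-projective, and then unwinding the reduction gives $M \otimes_R N$ projective for general $A^D$-projective $N$. I expect the only mildly delicate point to be bookkeeping with Sweedler notation when verifying $A^D$-linearity of $\Phi$ and the inversion identity; everything else is formal. A subtlety worth a sentence in the writeup: the roles of $M$ and $N$ are not symmetric in the statement (we need one of them $A^D$-projective), but the diagonal action is symmetric in $M$ and $N$ up to the flip isomorphism of $R$-modules, so assuming $N$ is the projective one loses no generality.
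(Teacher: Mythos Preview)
Your argument is correct and takes a genuinely different route from the paper's. The paper proceeds homologically: it first proves a lemma identifying $\mathrm{Hom}_{A^D}(M,N)$ with $\mathrm{Hom}_R(M,N)^A$, then shows that the tensor--Hom adjunction $\mathrm{Hom}_R(M\otimes_R N,P)\cong \mathrm{Hom}_R(M,\mathrm{Hom}_R(N,P))$ is $A^D$-equivariant and hence restricts to an isomorphism on $A^D$-linear maps; exactness of $P\mapsto \mathrm{Hom}_{A^D}(M\otimes_R N,P)$ then follows from exactness of $P\mapsto \mathrm{Hom}_R(N,P)$ (since $N$ is $R$-projective) composed with $Q\mapsto \mathrm{Hom}_{A^D}(M,Q)$ (since $M$ is $A^D$-projective). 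Your approach is more direct and constructive: reduce to $N=A^D$ and exhibit the shearing isomorphism between the diagonal and the free module. This buys you an explicit $A^D$-linear isomorphism and avoids the auxiliary lemma; the paper's approach, on the other hand, yields the adjunction $\mathrm{Hom}_{A^D}(M\otimes_R N,P)\cong \mathrm{Hom}_{A^D}(M,\mathrm{Hom}_R(N,P))$ as a byproduct.

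Two small points worth tightening. First, your inverse $\Psi(m\otimes g)=\sum S^D(g_{(0)})m\otimes g_{(1)}$ uses the identity $\sum S^D(g_{(1)})g_{(0)}=\varepsilon^D(g)1$, which is \emph{not} a bare antipode axiom but follows from the paper's standing hypothesis $(S^D)^2=I_{A^D}$; so your claim that ``only the Hopf algebra structure'' is needed is slightly overstated (though in full generality you could use $(S^D)^{-1}$ in place of $S^D$, the antipode of a finite Hopf algebra being bijective). Second, the flip $M\otimes N\to N\otimes M$ is $A^D$-linear for the diagonal actions only when $A^D$ is cocommutative, so your symmetry remark is not literally correct; but this is harmless, since the argument with the roles of $M$ and $N$ interchanged (reducing to $M=A^D$ and shearing on the left factor) goes through verbatim, indeed without even needing $(S^D)^2=I$.
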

\begin{proof}

Observe that for any $A^{D}$-modules $M$ and $N$, then 
$\mbox{Hom}_{R}(M, N)$ is endowed with the structure of an $A^{D}$-module by the rule:
 $$<g\varphi, m>=\sum _{(g)}g_{(0)}<\varphi ,S^{D}(g_{(1)})m >\ \ \forall g\in A^{D}$$
 where $\Delta ^{D}(g)= \sum _{(g)}g_{(0)}\otimes g_{(1)}$.
 
 \begin {lem}  For any $A^{D}$-modules $M$ and $N$,   one has the equality: 
  $$ \mbox{Hom}_{A^{D}}(M, N)= \mbox{Hom}_{R}(M, N)^{A} . $$ 

 \end {lem}
\begin{proof}
Let $\varphi$ be an element of 
 $\mbox{Hom}_{A^{D}}(M, N)$. Then,  for all
 $g\in A^{D}$,  we
 have
 $$
 <g\varphi, m>=\sum _{(g)}g_{(0)}<\varphi, S^{D}(g_{(1)})m>= \sum _{(g)}g_{(0)}S^{D}(g_{(1)})<\varphi, m> \ .
 $$
 Therefore
 $$<g\varphi, m> =\varepsilon^{D}(g)<\varphi ,m>\ . $$
This implies that 
 $$\mbox{Hom}_{A^{D}}(M, N)\subset \mbox{Hom}_{R}(M, N)^{A} .$$
 We want to prove that this inclusion is an equality. Let  $\varphi$ be an element of $\mbox{Hom}_{R}(M, N)^{A}$. Then,  for any 
 $a\in A^{D}$ and $m\in M$, we observe that since 
 $$a=\sum \varepsilon^{D}(a_{(0)})a_{(1)}, \ \mbox{with}\  \Delta^{D}(a)=\sum_{(a)}a_{(0)}\otimes a_{(1)}, $$ 
 we may write 
 $$<\varphi, am>=\sum_{(a)}\varepsilon^{D}(a_{(0)})<\varphi, a_{(1)}m>. $$
 Using that $\varphi \in \mbox{Hom}_{R}(M, N)^{A}$ we obtain that 
 $$<\varphi, am>=\sum_{(a)}<a_{(0)}\varphi, a_{(1)}m>.$$

 Hence we conclude that 
 $$<\varphi, am>=\sum_{(a)}a_{(3)}<\varphi, S^{D}(a_{(4)})a_{(1)}m>  $$
 where $(\Delta^{D}\otimes id )\circ \Delta^{D}(a)=\sum a_{(3)}\otimes a_{(4)}\otimes a_{(1)}$. 
 
 We now observe that the map 
 $$ A^{D}\times A^{D}\times A^{D}\rightarrow N$$
  $$ (a,b,c)\mapsto a<\varphi, S^{D}(b)cm)> $$ 
  is trilinear and therefore induces 
  $$\Theta: A^{D}\otimes A^{D}\otimes A^{D}\rightarrow N.$$
   It follows from the coassociativity of $A^D$ that we have the equality in $A^D\otimes_{R} A^D\otimes_{R} A^D$:
 $$(id\otimes \Delta^{D})\circ \Delta^{D}(a)=(\Delta^{D}\otimes id )\circ \Delta^{D}(a).$$
  Denoting  by $b$ this element,  then  we can write 
 $$\Theta (b)=<\varphi, am> . $$
 For any $a_{(1)}$  we set $\Delta^{D} (a_{(1)})=\sum a'_{(1)}\otimes a''_{(1)} $. It follows that  
  $$b= (id\otimes \Delta^{D})\circ \Delta^{D}(a)=\sum_{(a)}a_{(0)}\otimes a'_{(1)}\otimes a''_{(1)}.$$
  Therefore we have the equality:  
  $$\Theta(b)=\sum_{(a)}a_{(0)}<\varphi, S^{D}(a'_{(1)})a''_{(1)}m>, $$
  which can be written  
  $$\Theta(b)=\sum a_{(0)}<\varphi, (\sum S^{D}(a'_{(1)})a''_{(1)})m>. $$
  Since we know that $\varepsilon^{D}(a_{(1)})=\sum S^{D}(a'_{(1)})a''_{(1)}$ and belongs to $R$  for any $a_{(1)}$, we 
  deduce that 
  $$\Theta (b)=(\sum_{(a)}a_{(0)}\varepsilon^{D}(a_{(1)}))<\varphi, m>=a<\varphi, m>. $$
  Comparing the two expressions for  $\Theta(b)$ we deduce that 
  $$<\varphi, am>=a<\varphi, m>, \ \forall a\in A^{D}\ \mbox{and}\ \forall m\in M. $$
  The required inclusion then follows. This achieves the proof of the lemma. 
  \end{proof}
  
 The proof of the proposition now follows closely the proof of Corollary 3.3 in Chapter V.3 of [Mc]. We may assume for instance that $M$ is a  projective
$A^{D}$-module.
  We have to prove  that the functor $P \rightarrow \mbox{Hom}_{A^{D}}(M\otimes_R N,P)$
from the category of $A^{D}$-modules into the category of $R$-modules is exact.  By the adjoint property  of $\mbox{Hom}$ and $\otimes$ we know that  for any $R$-module $P$ we have a natural isomorphism of $R$-modules
$$ \psi_P:  Hom_R(M\otimes_R N,P) \simeq Hom_R(M,Hom_R(N,P))$$
$$f \mapsto\psi_{P}(f):m \mapsto (n \mapsto f(m\otimes n)).\ $$
We assert that if $P$ is an $A^{D}$-module then the adjoint isomorphism restricts to an isomorphism of $R$-modules 

 $$ \psi'_P:  Hom_{A^D}(M\otimes_R N,P) \simeq Hom_{A^D}(M,Hom_R(N,P)).$$
 Since the functors $P \rightarrow \mbox{Hom}_{R}(N,P)$ and $Q \rightarrow \mbox{Hom}_{A^{D}}(M,Q)$ are exact the result will then follow.

 We now want to prove the assertion. We start by proving that $\psi_{P}$ is an isomorphism of $A^{D}$-modules. 
 Let $\varphi$ be an element of $Hom_R(M\otimes_R N,P)$. Then for any for any $a\in A^D, m\in M, n \in N$  on the one hand we have
$$
 <\psi_{P}(a\varphi)(m), n>=<a\varphi, m\otimes n>=\sum _{(a)}a_{(0)}<\varphi, S^{D}(a_{(3)}) m\otimes S^{D}(a_{(2)})n>$$ 
 
where $(id\otimes \Delta^{D})\circ \Delta^{D}(a)=\sum a_{(0)}\otimes a_{(1)}\otimes a_{(2)}$.

On the other hand
 $$a\psi_{P}(\varphi)(m)=\sum _{(a)}a_{(0)}\psi_{P}(\varphi)(S^{D}(a_{(1)})m)$$
 and thus
 $$<a\psi_{P}(\varphi)(m), n>= \sum _{(a)}a_{(3)}<\psi_{P}(\varphi)(S^{D}(a_{(1)})m), S^{D}(a_{(4)}n> $$
 
$$ =\sum _{(a)}
 a_{(3)}<\varphi, (S^{D}(a_{(1)})m\otimes (S^{D}(a_{(4)})n>$$

 where $(\Delta^{D}\otimes id )\circ \Delta^{D}(a)=\sum a_{(3)}\otimes a_{(4)}\otimes a_{(1)}$.

 We now consider the map
 $$ A^D\times A^D\times A^D\rightarrow P$$
 $$ (x,y,z)\mapsto x<\varphi, S^{D}(z)m)\otimes S^{D}(y)n>.$$
 This is a trilinear $R$-map and it therefore induces 
 $$\Phi:A^D\otimes_{R} A^D\otimes_{R} A^D\rightarrow P.$$
 It follows once again from the coassociativity of $A^D$ that 
 $$(id\otimes \Delta^{D})\circ \Delta^{D}(a)=(\Delta^{D}\otimes id )\circ \Delta^{D}(a).$$
 Let us denote the above element by $c$.  It follows from the previous equalities that 
 $$ \Phi(c)=<\psi_{P}(a\varphi)(m),n>=<a\psi_{P}(\varphi)(m),n> .$$ 
 Hence we have proved as desired that 
 $$\psi_{P}(a\varphi)=a\psi_{P}(\varphi), \ \forall a \in A^{D}\ \mbox{and}\ \varphi\in Hom_R(M\otimes_R N,P).$$

 We    achieve the proof of the assertion. It is easily verified that since $\psi_{P}$ is an $A^{D}$-isomorphism it 
 induces an isomorphism: 
 $$\psi'_{P}: Hom_R(M\otimes_R N,P)^{A} \simeq Hom_R(M,Hom_R(N,P))^{A} .$$
 It now follows from  Lemma 2.7  that this is precisely the required isomorphism. 
 \end{proof} 

\subsection {Equivariant symmetric bundles}

\noindent A symmetric bundle  over $R$ is a
 finitely generated,  locally free $R$-module $M$, equipped
with a non-degenerate,  bilinear,  symmetric form
$$q: M\times M\rightarrow R . $$
Let $A$ be a finite and locally free Hopf algebra over $R$ and $(M, q)$ be a symmetric bundle over $R$. We shall say that  $(M, q)$ is
{\it $A$-equivariant} if   $M$ is  an $A$-comodule and if the following is true:

$$ \ q(gm,n)=q(m,S^{D}(g)n), \ \ \forall   m, n \in M,  \ \forall   g\in A^{D}\ \ \ . $$
If, moreover,    $M$ is a projective $A^{D}$-module,  we shall call $(M, q)$ a {\it projective $A$-equivariant bundle}.
Note that when $A=\mbox{Map} (\Gamma , R)$,  with $\Gamma $  a finite group
and  $A^{D}$ is the group algebra $R[\Gamma ]$,    an $A$-equivariant  symmetric bundle  is an
$R[\Gamma ]$-module  endowed with a non-degenerate, $\Gamma $-invariant, bilinear and symmetric form.

We observe that any $A$-equivariant symmetric bundle $(M, q)$ defines, after scalar extension by a commutative $R$-algebra $T$, an
$A_{T}$-equivariant symmetric bundle over $T$ that we denote by $(M_{T}, q_{T})$.

We can attach to any $R$-symmetric bundle  $(M, q)$ its orthogonal group that we denote by ${\bf O}(q)$. This is an affine group scheme
over $R$. This group scheme is most easily defined in terms of its associated functor of points. It is the functor which sends an $R$-algebra
$T$ to the orthogonal group of the form $(M_{T}, q_{T})$. Suppose now that $A$ is a commutative Hopf algebra.  Then we can associate to
$A$ the group scheme $G=\mbox{Spec}(A)$. We will say that $G$ is generically \'etale when $A_{K}$ is a separable $K$-algebra.
 In this case  the notion of $A$-equivariant
symmetric bundle has an interpretation  in terms of orthogonal representations.

\begin {prop} Let $A$ be a commutative Hopf $R$-algebra and let $G$ be the group scheme defined by $A$. We assume that $G$
is generically \'etale. Let $(M, q)$ be an $R$-symmetric bundle. Then the following properties are equivalent:
\item i) $(M, q)$ is $A$-equivariant.

\item ii) There exists a morphism of group schemes:
$$\rho: G\rightarrow {\bf O}(q).$$
\end{prop}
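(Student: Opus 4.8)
The plan is to unwind both conditions into statements about the comodule structure map $\alpha_M$ and the coaction of $G$, and to check that the equivariance identity $q(gm,n)=q(m,S^D(g)n)$ is exactly the condition that the map $G\to \mathbf{GL}(M)$ factors through $\mathbf{O}(q)$. Since $A$ is commutative, giving a morphism of group schemes $\rho:G=\mathrm{Spec}(A)\to \mathbf{GL}(M)$ is the same (by Yoneda, or by the dual statement about the associated functors of points, and using that $A_K$ is separable only insofar as one needs the dictionary between comodules and modules to be available — which is Proposition 1.3 of [CEPT], valid for $A$ finite locally free) as giving an $A$-comodule structure on $M$; this is the standard equivalence between representations of an affine group scheme and comodules over its coordinate Hopf algebra. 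I would state this dictionary first and then reduce the proposition to the single claim: \emph{under this dictionary, $\rho$ lands in $\mathbf{O}(q)$ if and only if $q(gm,n)=q(m,S^D(g)n)$ for all $g\in A^D$.}

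The key steps, in order. First, recall that a comodule structure $\alpha_M:M\to M\otimes_R A$ corresponds to a group-scheme homomorphism $\rho:G\to \mathbf{GL}(M)$: for an $R$-algebra $T$ and a point $g\in G(T)=\mathrm{Hom}_{R\text{-alg}}(A,T)$, the automorphism $\rho(g)$ of $M_T$ is $(\mathrm{id}_M\otimes g)\circ \alpha_M$ extended $T$-linearly; comultiplicativity of $\alpha_M$ is precisely multiplicativity of $\rho$, and counitality is $\rho(e)=\mathrm{id}$. Second, express the $A^D$-action in these terms: for $g\in A^D=\mathrm{Hom}_R(A,R)$, viewing $g$ as a point of $G$ valued in $R$ (not a group point, just an algebra point is not needed — the point is $\psi_M(g\otimes m)=(\mathrm{id}\otimes g)\alpha_M(m)$), the operator $m\mapsto gm$ is literally $\rho$ evaluated along $g$, in the sense that $\psi_M(g\otimes -)$ and the "universal" matrix coefficients of $\rho$ carry the same information, because $A^D$ separates points of $A$ (here finiteness of $A$ is used). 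Third, $\rho(g)\in\mathbf{O}(q)(T)$ means $q_T(\rho(g)x,\rho(g)y)=q_T(x,y)$; substituting the universal point and pairing against elements of $A^D$, and using that $S^D$ corresponds to the inverse on $G$ (i.e.\ $\rho(g)^{-1}$ corresponds to the antipode), one rewrites the orthogonality condition as $q(gm,n)=q(m,S^D(g)n)$ for all $g\in A^D$ and $m,n\in M$. Both implications i)$\Leftrightarrow$ii) then fall out of this single equivalence, since ii) is the statement "$\rho$ exists with image in $\mathbf{O}(q)$" and the comodule underlying such a $\rho$ is exactly the equivariant structure in i).

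The main obstacle I expect is bookkeeping with the antipode: making precise that "composition with $S^D$" on the $A^D$-module side corresponds to "precompose $\rho$ with inversion on $G$" on the group-scheme side, and hence that the transpose of $\rho(g)$ with respect to $q$ is forced to be $\rho(S^D g)$. Concretely, one must verify that the equivariance identity is symmetric in the required way — apply it with $g$ replaced by $S^D(g)$ and use $(S^D)^2=I_{A^D}$ (guaranteed by our standing hypothesis $S^2=I_A$) to get $q(m, gn) = q(S^D(g)m, n)$, which is what genuine orthogonality of each $\rho(g)$ demands. The generic \'etaleness hypothesis on $G$ is not strictly needed for this particular equivalence — it is a running assumption of the section ensuring $A^D$ behaves like a group algebra after base change, which streamlines writing $\psi_M$ and the trace arguments elsewhere — so in the write-up I would note that the proof goes through once the comodule/module dictionary and the finiteness of $A$ are in hand, and the rest is a direct translation between the functor-of-points description of $\mathbf{O}(q)$ and the displayed identity.
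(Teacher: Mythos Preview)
Your plan is correct and will go through. The direction i) $\Rightarrow$ ii) is handled in essentially the same way as the paper: both of you use the functor of points, and for $g\in G(T)$ one has $S^D(g)=g^{-1}$, so the equivariance identity $q(gm,n)=q(m,S^D(g)n)$ immediately gives $q_T(\rho_T(g)m,\rho_T(g)n)=q_T(m,n)$.

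The direction ii) $\Rightarrow$ i) is where you diverge from the paper. The paper \emph{uses} the generic \'etaleness hypothesis here: it passes to a finite extension $K'/K$ with $A^D_{K'}\simeq K'[\Gamma]$, writes an arbitrary $g\in A^D$ as $\sum_\gamma r_\gamma\gamma$ with $\gamma\in\Gamma\subset G(K')$, checks the identity on each group-like $\gamma$ (where it is immediate from orthogonality of $\rho_{K'}(\gamma)$), and extends by $K'$-linearity. Your route instead evaluates the universal orthogonality relation $\sum q(m_{(0)},n_{(0)})\,m_{(1)}n_{(1)}=q(m,n)\cdot 1_A$ against $g\in A^D$ to get $\sum q(g_{(1)}m,g_{(2)}n)=\varepsilon^D(g)q(m,n)$, and then passes to the adjoint form $q(gm,n)=q(m,S^D(g)n)$ by the standard antipode manipulation (insert $g_{(2)}S^D(g_{(3)})=\varepsilon^D(g_{(2)})$ and regroup via coassociativity). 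This is the ``bookkeeping with the antipode'' you flag; it is routine but you should write it out, since it is the only nontrivial step.

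What each approach buys: the paper's argument is shorter and more transparent once one accepts the base change to $K'[\Gamma]$, but it genuinely consumes the hypothesis that $G$ is generically \'etale. Your argument is a pure Hopf-algebra computation valid for any finite locally free commutative $A$ (with $S^2=I_A$), so your remark that generic \'etaleness is not needed for this proposition is correct---and is a mild strengthening of what the paper proves.
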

\begin{proof} For any $R$-algebras $E$ and $F$ we denote by $\mbox {Hom}_{R, alg}(E, F)$ the set of morphisms of $R$-algebras
$f: E\rightarrow F$. We recall that for any $R$-algebra $T$ we have the following isomorphism
$$G(T)\simeq \mbox{Hom}_{R, alg}(A, T)\simeq \mbox{Hom}_{T, alg}(A_{T}, T)\ .$$
Moreover, since $A$ is a finitely generated projective $R$-module, we know that
$$\mbox{Hom}_{T}(A_{T}, T)\simeq \mbox{Hom}_{R}(A, R)\otimes_{R}T \ .$$
With a slight abuse of notation we write
$$G(T)=\mbox{Hom}_{T, alg}(A_{T}, T )\subset \mbox{Hom}_{R}(A, R)\otimes_{R}T\ .$$
We assume $i)$. Since $M$ is an $A^{D}$-module, after scalar extension $M_{T}$ becomes a $A^{D}_{T}$-module. Therefore
for any $g\in G(T)$ we may define
$$\rho_{T}(g): M_{T}\rightarrow M_{T}, \quad m\mapsto gm\ .$$
 One checks that $\rho_{T}$ induces a group homomorphism
from $G(T)$ to $\mbox{Aut}(M_{T})$. We now observe that, since $q_{T}$ is $A^{D}_{T}$-equivariant, for all $ m, n \in M_{T}$
$$q_{T}(\rho_{T}(g)(m), n)=q_{T}(gm, n)=q_{T}(m, S^{D}(g)n)=q_{T}(n, \rho_{T}(g)^{-1}(n)), $$
(for the last equality we use that  the inverse of $g$ in $G(T)$ is $S^{D}(g)$). Therefore we conclude that
$\rho_{T}(g) \in {\bf O}(q_{T}))$ for any $g$, which proves $ii)$.
We now suppose that $ii)$ is satisfied. It follows from the hypothesis that there exists a group homomorphism
$$\rho_{A}: G(A)\rightarrow \mbox{Aut}(M_{A})\ .$$
For the element $i_{d}$ we obtain an $A$-linear map $\rho_{A}(i_{d}): M\otimes_{R}A\rightarrow M\otimes_{R}A$
which determines by restriction $\alpha: M\rightarrow M\otimes_{R}A$. The map $\alpha$ endows $M$
with a right $A$-comodule structure (see [W], Chapter 3) and therefore with a left $A^{D}$-module structure. Since there exists a
finite extension $K'/K$ such that
$A^{D}_{K'}=K'[\Gamma]$, any element $g\in A^{D}$ can be written $g=\sum_{\gamma\in \Gamma}r_{\gamma}\gamma$
with $r_{\gamma}\in K',\  \forall \gamma\in \Gamma$. Since every $\gamma$ belongs to $G(K')$, then $S^{D}(\gamma)=\gamma^{-1}$.
Since $\rho_{K'}(\gamma)$ belongs to ${\bf O}(q_{K'})$ for any $\gamma \in \Gamma$ we can write the equalities:
$$q(gm, n)=\sum_{\gamma\in \Gamma}r_{\gamma}q(\gamma m, n)=
\sum_{\gamma\in \Gamma}r_{\gamma}q( m, \gamma^{-1}n)=q(m, S^{D}(g)n), $$
for any $m$ and $n$ $\in M$ and $g\in A^{D}$. Hence we have proved as required that $(M, q)$ is $A$-equivariant.
\end{proof}
\medskip
\vskip 0.1 truecm
  We  call any morphism of group schemes $\rho:
 G\rightarrow {\bf O}(q)$ an {\it orthogonal representation of  $G$}. In this paper we will frequently speak either of equivariant symmetric bundles or orthogonal representations.
 One should notice that when $G$ is generically constant an orthogonal representation,  as defined above,  induces by restriction to the
 generic fiber an orthogonal representation in the usual sense.

 \vskip 0.1 truecm

Let $(M, q)$ be a projective  $A$-equivariant symmetric bundle with $A^{D}$ satisfying ${\bf H_{1}}$.   We fix
an $R$-basis $\theta^{D}$ of $I(A^{D})$.  Under these assumptions we use  Lemma 2.5  to define a map
$$q^{A}:M^{A}\times M^{A}\rightarrow R$$
by setting
$$q^{A}(x, y)=q(m, y)=q(x, n)$$
where $m$ (resp. $n$) is any arbitrary element of $M$ such that $x=\theta^{D}m$  (resp. $y=\theta^{D}n$).
Observe that if $\theta^{D}m=\theta^{D}m'$, then $m-m'$ belongs to $\mbox {Ker} (\varepsilon ^{D})M$.  Since
$$q(gu, \theta^{D}n)=q(u, S^{D}(g)\theta^{D}n)=q(u, \varepsilon ^{D}(S^{D}(g))\theta ^D n)=
q(u, \varepsilon ^{D}(g)\theta ^D n)=0,  $$
for any $g\in \mbox {Ker} \varepsilon ^{D}$,   we deduce that $q^{A}$ is well defined. Moreover, using Lemma 2.3,  we note that
$$q^{A}(x, y)=q(y, m)=q(\theta^{D}n, m)=q(n, S^{D}(\theta^{D})m)=q(n, \theta^{D}m)=q^{A}(y, x) \ . $$
Hence $q^{A}$ is a bilinear symmetric form on $M^{A}$.

\begin{prop} \label{CohAl}
 Let $A$ be a Hopf algebra such that $A^{D}$ satisfies ${\bf H_{1}}$ and let $(M, q)$ be
 a projective $A$-equivariant symmetric bundle. Then $(M^A, q^A)$ is a symmetric
 $R$-bundle.
\end{prop}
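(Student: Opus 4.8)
The statement has three parts, of which only the non-degeneracy of $q^{A}$ requires an argument: by Lemma~\ref{fixedtrace}~ii) the module $M^{A}$ is finitely generated and locally free over $R$, and the discussion preceding the statement already shows that $q^{A}$ is a well-defined symmetric bilinear form on $M^{A}$. So the plan is to prove that
$$\psi^{A}\colon M^{A}\longrightarrow \mathrm{Hom}_{R}(M^{A},R),\qquad x\longmapsto q^{A}(x,-),$$
is an isomorphism of $R$-modules.

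First I would note that the adjunction map $\phi\colon M\to\mathrm{Hom}_{R}(M,R)$, $m\mapsto q(m,-)$, attached to $q$ is an isomorphism of $A^{D}$-modules. It is an $R$-module isomorphism precisely because $q$ is non-degenerate. For the $A^{D}$-linearity, recall from the proof of Proposition~\ref{again} that $\mathrm{Hom}_{R}(M,R)$ is an $A^{D}$-module once $R$ is equipped with the trivial action through $\varepsilon^{D}$; an easy computation with that structure gives $\langle g\varphi,n\rangle=\langle\varphi,S^{D}(g)n\rangle$, and then the equivariance $q(gm,n)=q(m,S^{D}(g)n)$ says exactly that $\phi(gm)=g\cdot\phi(m)$. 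Consequently $\mathrm{Hom}_{R}(M,R)$, being $A^{D}$-isomorphic to $M$, is again a projective $A^{D}$-module, so Lemma~\ref{fixedtrace} applies to it; moreover, being $A^{D}$-linear, $\phi$ carries the submodule of invariants isomorphically onto the submodule of invariants, i.e.\ it restricts to an isomorphism $\phi\colon M^{A}\xrightarrow{\ \sim\ }\mathrm{Hom}_{R}(M,R)^{A}$.

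Next I would identify $\mathrm{Hom}_{R}(M,R)^{A}$ with $\mathrm{Hom}_{R}(M_{A},R)$. Using the formula $\langle g\varphi,n\rangle=\langle\varphi,S^{D}(g)n\rangle$, together with the identity $\varepsilon^{D}\circ S^{D}=\varepsilon^{D}$ (so that $S^{D}$ maps $\mathrm{Ker}(\varepsilon^{D})$ onto itself), one checks that $\varphi\in\mathrm{Hom}_{R}(M,R)^{A}$ if and only if $\langle\varphi,g'n\rangle=0$ for every $g'\in\mathrm{Ker}(\varepsilon^{D})$ and $n\in M$, that is, if and only if $\varphi$ vanishes on $\mathrm{Ker}(\varepsilon^{D})M$ and hence factors through the quotient $M_{A}=M/\mathrm{Ker}(\varepsilon^{D})M$. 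This gives a canonical identification $\mathrm{Hom}_{R}(M,R)^{A}=\mathrm{Hom}_{R}(M_{A},R)$. Combining it with precomposition by $j^{-1}$, where $j\colon M_{A}\xrightarrow{\ \sim\ }M^{A}$, $\bar m\mapsto\theta^{D}m$, is the isomorphism of Lemma~\ref{fixedtrace}~iii), I obtain a chain of isomorphisms
$$M^{A}\ \xrightarrow{\ \sim\ }\ \mathrm{Hom}_{R}(M,R)^{A}\ =\ \mathrm{Hom}_{R}(M_{A},R)\ \xrightarrow{\ \sim\ }\ \mathrm{Hom}_{R}(M^{A},R).$$
It then remains to check that this composite is exactly $\psi^{A}$, which is immediate from the definition of $q^{A}$: writing $x=\theta^{D}m$ and $y=\theta^{D}n$, the image of $x$ is the functional sending $y=j(\bar n)$ to $\phi(x)(n)=q(\theta^{D}m,n)=q^{A}(x,y)$. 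Hence $\psi^{A}$ is an isomorphism, $q^{A}$ is non-degenerate, and the proof is complete.

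The only point with real content is the first step: recognising that the form-adjunction $\phi$ is $A^{D}$-linear for the $\mathrm{Hom}$-module structure of Proposition~\ref{again}, and that taking $A$-invariants of $\mathrm{Hom}_{R}(M,R)$ recovers the $R$-dual of the coinvariants $M_{A}$. After that the argument is purely formal, its real engine being Lemma~\ref{fixedtrace} (which is where the projectivity of $M$ over $A^{D}$ and hypothesis ${\bf H_{1}}$ on $A^{D}$ enter), applied both to $M$ and to $\mathrm{Hom}_{R}(M,R)$. The only technical care needed is the antipode bookkeeping — the identities $(S^{D})^{2}=I_{A^{D}}$ and $\varepsilon^{D}\circ S^{D}=\varepsilon^{D}$ — together with the observation that $\mathrm{Hom}_{R}(M,R)$ is again $A^{D}$-projective so that Lemma~\ref{fixedtrace} may be re-applied to it. A more computational alternative would be to check non-degeneracy of $q^{A}$ locally on $\mathrm{Spec}(R)$ and reduce to the classical case $A^{D}=k[\Gamma]$ over a field, but one would then have to control the behaviour of the functor $M\mapsto M^{A}=\theta^{D}M$ under such base changes, which does not appear to be any shorter.
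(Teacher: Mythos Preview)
Your proof is correct and takes a genuinely different route from the paper's. The paper does not argue directly: it invokes Proposition~2.2 of [CNET] (the constant case $A=\mathrm{Map}(\Gamma,R)$), observes that the same proof goes through \emph{mutatis mutandis} provided $M/M^{A}$ is torsion-free, and then verifies this last point by reducing, via projectivity, to the case $M=A^{D}$. Your argument, by contrast, is self-contained: you recognise that the form-adjunction $\phi\colon M\to\mathrm{Hom}_{R}(M,R)$ is an $A^{D}$-linear isomorphism for the $\mathrm{Hom}$-module structure set up in Proposition~\ref{again}, identify $\mathrm{Hom}_{R}(M,R)^{A}$ with $\mathrm{Hom}_{R}(M_{A},R)$ by elementary antipode manipulation, and then close the loop with the coinvariants-to-invariants isomorphism $j$ of Lemma~\ref{fixedtrace}~iii). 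The paper's approach is shorter on the page but opaque without access to [CNET]; yours makes the mechanism fully explicit and shows transparently why the result is a formal consequence of Lemma~\ref{fixedtrace} plus $A^{D}$-equivariance of $q$. One small remark: your appeal to Lemma~\ref{fixedtrace} for $\mathrm{Hom}_{R}(M,R)$ is not actually needed in your argument (the identification $\mathrm{Hom}_{R}(M,R)^{A}=\mathrm{Hom}_{R}(M_{A},R)$ holds without any projectivity hypothesis), so you could trim that sentence.
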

\begin{proof} From Lemma 1.5 we know that $M^{A}$ is a locally free $R$-module. It remains to prove that the adjoint map
$$\varphi_{q^{A}}: M^{A} \rightarrow \mbox {Hom}(M^{A}, R)$$ is an
$R$-module isomorphism.  This result, when $A=\mbox {Map}(\Gamma , R)$ and  $\Gamma $ is
a finite group, was  proved in Proposition 2.2 of [CNET] .  This proof can   be used {\it mutatis mutandis} in this more general situation if,
as in the situation  considered in [CNET],  the quotient $M/M^{A}$ is torsion free. This is easily checked. We note that  it suffices to
prove the result when $M= A^{D}$. Let $f\in A^{D}$ and $d \in R$, $d\not\neq 0$ such that $df \in (A^{D})^{A}$.
It follows from the  definition of  $(A^{D})^{A}$  that for any $g \in A^{D}$ then $g(df)=\varepsilon ^{D}(g)(df)$. Since $A^{D}$ is a
projective $R$-module it is torsion free  and thus    $gf=\varepsilon ^{D}(g)f$,  which proves that  $f\in (A^{D})^{A}$.
\end{proof}

\medskip

\vskip 0.1 truecm
\noindent {\bf Remarks }

 \noindent {\bf 1.} For any $x=\theta^{D}m$ and $y=\theta^{D}n$ of $M^{A}$  it is easily verified that
 $$q(x, y)=q(m, S^{D}(\theta^{D})\theta^{D}n)=q(m, \varepsilon ^{D}(\theta^{D})y)=
 \varepsilon ^{D}(\theta^{D})q^{A}(x, y)\ .$$
 If  $A^{D}_{K}$ is not separable, then  we know that $\varepsilon^{D}(x)=0$ for any $x\in I(A^{D}_{K})$, from Theorem
 5.1.8 in [S]. Therefore this situation makes clear  that $q^{A}$ is not in general the restriction of $q$ to $M^{A}$,
 since, as we deduce from the previous equalities,  $q^{A}$ is unimodular while the restriction of $q$ to $M^{A}$ is zero.
\vskip 0.1 truecm
 \noindent {\bf 2.} It is important to note that the form $q^{A}$ depends upon  the choice of a generator of $I(A^{D})$.
 Taking  $\theta'^{D} =\lambda \theta^{D} $
 with $\lambda \in R^{*}$ as a new generator of $I(A^{D})$ provides us with a new symmetric form  $q'^{A}=\lambda q^{A}$
  on $M^{A}$. If $\lambda $ is a square of a unit of  $R$, then the symmetric bundles $(M^{A}, q^{A})$ and $(M^{A}, q'^{A})$ are isometric.
   As we will see, at the end of Section 2, our future constructions will not depend upon this choice.

\section {Twists of   symmetric bundles}

Recall that $R$ is an integral domain with field of fractions $K$ and that $A$ is a Hopf order in the Hopf algebra $A_{K}$. The
aim of this section is to define the {\it algebraic twist } of an $A$-equivariant symmetric bundle by a principal homogeneous space for $A$.
As a first step we show, under some assumptions on $A$, how to associate to a principal homogeneous space for $A$ an $A$-
equivariant projective vector bundle. The trace form is the key tool of this construction.

We let $A$ be a commutative Hopf algebra which is finite and flat over $R$.   Let  $B$ be  a commutative finite flat $R$-algebra, endowed with a structure of an $A$-comodule algebra
$$\alpha_{B}: B\rightarrow B\otimes_{R}A\ . $$  We suppose that $B^{A}=R$.
We shall say that  $B$ is {\it  a principal homogeneous space for $A$ over $R$}, abbreviated by PHS,  when
  $$(I_{d}\otimes 1, \alpha_{B }): B\otimes_{R}B\simeq B\otimes_{R}A\ $$  is  an isomorphism of $B$-algebras and
  left $A^{D}$-modules.  We observe that $A$,  endowed
with the comultiplication map,  provides  an example of such a  space.

\begin{lem} Let $A_{K}$ be a separable commutative Hopf algebra and let $B_{K}$ be a principal homogeneous space for $A_{K}$.
  Let $Tr$ be the trace form on $B_{K}$. Then $(B_{K}, Tr)$ is
a projective  $A_{K}$-equivariant symmetric bundle.

\end{lem}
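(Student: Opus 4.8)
The plan is to verify the two assertions separately: that $(B_K, \mathrm{Tr})$ is $A_K$-equivariant, and that $B_K$ is projective as an $A_K^D$-module. Since $A_K$ is separable and commutative, Lemma 2.3 gives that $A_K$ is unimodular with $S$ acting as the identity on $I(A_K)$; moreover after a finite extension $K'/K$ we have $A_{K'}^D \simeq K'[\Gamma]$ for a finite group $\Gamma$, a reduction I expect to lean on repeatedly. First I would treat equivariance. The trace form on $B_K$ is $\mathrm{Tr}(x,y) = \mathrm{Tr}_{B_K/K}(xy)$, so the required identity $\mathrm{Tr}(gx, y) = \mathrm{Tr}(x, S^D(g)y)$ for all $g \in A_K^D$ reduces, by $K'$-linearity and the fact that $A_{K'}^D = K'[\Gamma]$ with each $\gamma \in \Gamma$ group-like and $S^D(\gamma) = \gamma^{-1}$, to checking $\mathrm{Tr}_{B_{K'}/K'}(\gamma(x)\cdot y) = \mathrm{Tr}_{B_{K'}/K'}(x \cdot \gamma^{-1}(y))$. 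This is immediate once one knows $\gamma$ acts as a $K'$-algebra automorphism of $B_{K'}$ (so $\gamma(x)y = \gamma(x \cdot \gamma^{-1}(y))$) and that the trace form is invariant under algebra automorphisms. That $\Gamma$ acts by algebra automorphisms follows from $B_K$ being an $A_K$-\emph{comodule algebra} (the structure map $\alpha_B$ is a ring homomorphism), which dualizes to the statement that each group-like element of $A_{K'}^D$ acts multiplicatively.

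Next I would address non-degeneracy of $\mathrm{Tr}$, i.e. that $(B_K,\mathrm{Tr})$ is genuinely a symmetric bundle. The trace form of a finite $K$-algebra is non-degenerate precisely when the algebra is separable (étale) over $K$. So the point is to show $B_K$ is separable. Here I would use the PHS property: $(\mathrm{Id}\otimes 1, \alpha_{B_K}) : B_K \otimes_K B_K \xrightarrow{\sim} B_K \otimes_K A_K$ is a $B_K$-algebra isomorphism. Since $A_K$ is separable over $K$, $B_K \otimes_K A_K$ is separable over $B_K$, hence $B_K \otimes_K B_K$ is separable over $B_K$ (via the first factor), which is exactly the statement that $B_K$ is separable over $K$. (Alternatively: faithfully flat descent — $B_K \to B_K\otimes_K B_K \cong B_K \otimes_K A_K$ is faithfully flat since $B_K \ne 0$, and separability descends along faithfully flat base change.) This gives non-degeneracy of $\mathrm{Tr}$.

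Finally, projectivity of $B_K$ as an $A_K^D$-module. Again I would pass to $K'$ where $A_{K'}^D = K'[\Gamma]$ and $B_{K'}$ becomes a $K'[\Gamma]$-module; since $A^D$ is finite over $R$ and $K'$ is flat, $B_{K'} = B_K \otimes_K K'$ and projectivity over $A_K^D$ can be checked after this faithfully flat extension. Over $K'$, the PHS isomorphism reads $B_{K'} \otimes_{K'} B_{K'} \cong B_{K'} \otimes_{K'} A_{K'} = B_{K'} \otimes_{K'} \mathrm{Map}(\Gamma, K')$, which exhibits $B_{K'}$, as a $\Gamma$-module, as (a base change of) the regular representation — concretely $B_{K'} \cong \mathrm{Map}(\Gamma, K')$ is $K'[\Gamma]$-free of rank one, since the dual of $\mathrm{Map}(\Gamma,K')$ as a $K'[\Gamma]$-module is free. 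Hence $B_{K'}$ is a free, in particular projective, $A_{K'}^D$-module, and descending along $K'/K$ gives that $B_K$ is projective over $A_K^D$. The main obstacle I anticipate is being careful with the descent arguments and with matching the comodule-algebra structure on $B_K$ to the group action on $B_{K'}$ (i.e. verifying the PHS isomorphism really identifies $B_{K'}$ with the regular representation as an honest $K'[\Gamma]$-module, not merely as a vector space); once the identification with $\mathrm{Map}(\Gamma,K')$ is in hand, everything — equivariance of the trace form, its non-degeneracy, and projectivity — becomes the classical statement recorded for the constant case, combined with faithfully flat descent.
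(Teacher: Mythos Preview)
Your proposal is correct and follows essentially the same route as the paper: pass to an extension $K'/K$ with $A_{K'}^{D}\cong K'[\Gamma]$, use that each group-like $\gamma$ acts as a $K'$-algebra automorphism of $B_{K'}$ to get $A_K$-equivariance of the trace, and obtain separability of $B_K$ (hence non-degeneracy) from the PHS isomorphism by faithfully flat descent; the paper simply asserts projectivity from the PHS hypothesis without the detail you supply. One small point: your claim that ``$B_{K'}\cong \mathrm{Map}(\Gamma,K')$'' over $K'$ does not follow from merely splitting $A_{K'}$---the torsor can still be nontrivial---so either enlarge $K'$ further to trivialize $B$, or (cleaner) descend projectivity directly along the faithfully flat $K\to B_K$ using $B_K\otimes_K B_K\cong B_K\otimes_K A_K$ and the Larson--Sweedler freeness of $A_K$ over $A_K^{D}$, exactly as you anticipate in your final paragraph.
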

\begin{proof} Since $B_{K}$ is a principal homogeneous space for $A_{K }$, we know  that $B_{K}$ is a projective $A^{D}_{K}$-module. Using the fact that $B_{K}$ becomes isomorphic to $A_{K}$ after a faithful base change, it follows by descent theory
that $B_{K}$ is separable and therefore that the trace is non-degenerate. We now want to show that the trace is an $A_{K}$-equivariant form.
As in Lemma 2.4  we fix a finite  extension $K'/K$ such that $A^{D}_{K'}$ is isomorphic to $K'[\Gamma ]$, where $\Gamma $ is
a finite group. In this case  $A^{D}_{K'}$,  as a $K'$-vector space,  has a basis $\{\gamma, \gamma\in \Gamma  \} $ consisting
of group like elements. Since $B_{K'}$ is an $A^{D}_{K'}$-module algebra, one easily checks that every $\gamma$ defines an automorphism of $K'$-algebra of
$B_{K'}$ whose inverse is $S^{D}(\gamma)$. Therefore the trace form $q_{K'}$ of $B_{K'}$ is invariant under each $\gamma \in \Gamma$.
Let $q$  denote  the trace form on $B_{K}$. Any element  $g$ of
  $A^{D}_{K}$ may be written  $g=\sum_{\gamma\in \Gamma }x_{\gamma}\gamma $  on the basis
$\{\gamma, \gamma\in \Gamma  \} $ of $A^{D}_{K'}$ with $x_{\gamma} \in K' $. Then,  for any element $x$ and $y$ of $B_{K}$,  we have the equalities:
$$q(gx, y)=\sum_{\gamma\in \Gamma }x_{\gamma}q_{K'}(\gamma x, y)=\sum_{\gamma\in \Gamma }x_{\gamma }q_{K'}(x, S^{D}(\gamma)y)
=q(x, S^{D}(g)y)\ . $$ We conclude that $q$ is $A_{K}$-equivariant, as required.
\end{proof}

We now wish to generalize the above construction when working with the ring $R$ in place of the field $K$.
 A key-role in this case  is played by  the codifferent of $B$.

\subsection { The   square   root   of   the   codifferent}

 The codifferent of $B$ is defined by
$$\mathcal{D}^{-1}(B) =\{x\in B_{K}\  \mid Tr(xb)\in R \ \ \forall b\in B\}   \ . $$
In the case where $R$ is a field then $\mathcal{D}^{-1}(B)=B$. It follows from the $A_{K}$-invariance of the trace form proved in
Lemma 2.1 that $\mathcal{D}^{-1}(B)$ is an $A^{D}$-module. We start by studying the codifferent of $A$.

\begin{prop}  Let $A$ be a commutative Hopf algebra, let  $I$ be the set of integrals of $A$, and assume that $A_{K}$ is separable. Then

\begin{itemize}
\item[i)]   There exists a unique primitive idempotent $e $ of $A_{K}$ and a fractional ideal $\Lambda $ of $R$ such that

$$I_{K}= Ke , \ \mbox {and}\ \  I=\Lambda  e \ \ .$$

\item [ii)] We have the equality:
$$\mathcal{D}^{-1}(A)=\Lambda^{-1} A$$

\end{itemize}

\end{prop}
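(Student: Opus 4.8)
The plan is to analyze the structure of $I = I(A)$ and the codifferent $\mathcal{D}^{-1}(A)$ by exploiting the separability of $A_K$ and the Larson--Sweedler isomorphism $A \simeq A^D \otimes_R I(A)$ recalled earlier. For part i), since $A_K$ is a separable commutative $K$-algebra, Theorem 5.1.8 of [Sw] gives the decomposition $A_K = I(A_K) \oplus \mathrm{Ker}(\varepsilon)$ as $A_K$-ideals, with $I(A_K)$ a one-dimensional $K$-vector space. The projection $A_K \to I(A_K)$ along $\mathrm{Ker}(\varepsilon)$ is multiplication by a central idempotent $e$; since $I(A_K) = A_K e$ is one-dimensional and $\varepsilon(e) \neq 0$ (as $\varepsilon$ is non-zero on $I(A_K)$), $e$ is a primitive idempotent and $I(A_K) = Ke$. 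For the integral lattice itself, $I = I(A) = I(A_K) \cap A$ is an $R$-submodule of the line $Ke$, hence of the form $\Lambda e$ for the fractional ideal $\Lambda = \{ \lambda \in K : \lambda e \in A \}$ of $R$; this $\Lambda$ is indeed a (nonzero, finitely generated) fractional ideal because $I(A)$ is a rank-one projective $R$-module by Larson--Sweedler.

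For part ii), I would first normalize $\varepsilon(e)$: rescaling, one may compute $\varepsilon(e)$ and relate the trace form to the multiplication-by-$e$ projection. The key computation is that for $x \in A_K$, the trace $\mathrm{Tr}(x)$ can be expressed via $e$: writing $x = \varepsilon'(x) e + (\text{term in } \mathrm{Ker}\,\varepsilon)$ and using that multiplication by elements of $\mathrm{Ker}(\varepsilon)$ is nilpotent-free here (the algebra is separable, so it's a product of fields after base change), one shows $\mathrm{Tr}(xy)$ is controlled by the coefficient of $e$ in $xy$. Concretely, after a faithfully flat base change $K'/K$ making $A_{K'} = \mathrm{Map}(\Gamma, K')$, the integral is $e_{K'} = \frac{1}{|\Gamma|}\sum_\gamma \delta_\gamma$ ... actually one wants $e = \frac{1}{|\Gamma|}\mathbf{1}$-type element; the point is that $\mathrm{Tr}$ pairs $A_K$ with itself and the codifferent $\mathcal{D}^{-1}(A) = \{ x \in A_K : \mathrm{Tr}(xA) \subset R\}$ becomes, under the self-duality furnished by the trace, identifiable with the $R$-dual of $A$ inside $A_K$. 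The crucial input is that, because the trace form identifies $A^D$ (the linear dual) with $A$ up to twisting by $I(A)$ — this is exactly the Larson--Sweedler statement $A \simeq A^D \otimes_R I(A)$ — the dual lattice $\mathcal{D}^{-1}(A)$ picks up precisely the inverse of the ideal $\Lambda$ cutting out $I(A)$ inside $Ke$. So I would argue: $\mathcal{D}^{-1}(A)$ is an $A$-submodule of $A_K$ (by $A_K$-invariance of $\mathrm{Tr}$), it contains $A$, and a rank/determinant count via Larson--Sweedler forces $\mathcal{D}^{-1}(A) = \Lambda^{-1} A$, once one checks the containment $\Lambda^{-1} A \subseteq \mathcal{D}^{-1}(A)$ by a direct trace computation using the idempotent $e$ and the relation $\mathrm{Tr}(e \cdot -) = \varepsilon(-)$ up to a unit.

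I expect the main obstacle to be part ii): establishing the precise identity $\mathcal{D}^{-1}(A) = \Lambda^{-1}A$ rather than just a containment or an equality up to an unspecified ideal. The delicate point is to pin down how the trace form interacts with the idempotent $e$ — namely proving a clean formula like $\mathrm{Tr}(xe) = \varepsilon(x)$ (or $\mathrm{Tr}(xe) = \varepsilon(e)^{-1}\cdots$, with the normalization being exactly what matters) — and then leveraging it together with $A_K = Ke \oplus \mathrm{Ker}(\varepsilon)$ to show that $x \in \mathcal{D}^{-1}(A)$ forces the $e$-component of $x$ to lie in $\Lambda^{-1}$ and the $\mathrm{Ker}(\varepsilon)$-component to lie in $A$. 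The faithfully flat descent to $A_{K'} = \mathrm{Map}(\Gamma, K')$, where everything can be made explicit in terms of the basis of Dirac functions, is the natural device to verify this normalization, after which one descends the equality back to $R$ using that both sides are $R$-lattices in $A_K$ and the formation of the codifferent commutes with flat base change.
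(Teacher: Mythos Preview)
Your treatment of part i) is correct and essentially identical to the paper's: both use the decomposition $A_K = I_K \oplus \mathrm{Ker}(\varepsilon)$ from [Sw], extract the idempotent $e$ spanning $I_K$, and set $\Lambda = \{x \in K : xe \in A\}$.

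For part ii) you have assembled the right raw materials (Larson--Sweedler, the trace pairing, and the key identity $\mathrm{Tr}(ex)=\varepsilon(x)$), but your proposed execution has a genuine gap. Your ``decomposition'' plan---showing that for $x\in\mathcal D^{-1}(A)$ the $e$-component lies in $\Lambda^{-1}$ and the $\mathrm{Ker}(\varepsilon)$-component lies in $A$---does not describe $\Lambda^{-1}A$: the lattice $A$ does \emph{not} split as $(A\cap Ke)\oplus(A\cap\mathrm{Ker}\,\varepsilon)$, since the projection of $A$ onto $Ke$ is $Re$ (because $\varepsilon(A)=R$) while $A\cap Ke=I=\Lambda e$; these differ exactly when $\Lambda\neq R$. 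Likewise, the ``rank/determinant count'' is too coarse: knowing that $\mathcal D^{-1}(A)$ is an $A$-submodule of $A_K$ containing $A$, with the right discriminant, does not by itself force it to be of the form $J\cdot A$ for a fractional ideal $J$, since $A$ need not be Dedekind.

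The clean step you are missing---and the one the paper uses---is to bring in the $A^D$-action rather than the $A$-action. The trace isomorphism $\Psi:A_K\to A_K^D$, $\Psi(a)=\mathrm{Tr}(a\cdot -)$, is $A_K^D$-linear for the module structure $\langle f*g,x\rangle=\langle g,S^D(f)x\rangle$ on $A_K^D$ (this is where $A$-equivariance of the trace, Lemma 3.1, enters). Now Larson--Sweedler gives $A=A^D\cdot I=A^D\cdot\Lambda e$, so by $A^D$-linearity $\Psi(A)=\Lambda\,A^D\cdot\Psi(e)$. Your own computation $\mathrm{Tr}(ex)=\varepsilon(x)$ says exactly that $\Psi(e)=\varepsilon=1_{A^D}$, whence $\Psi(A)=\Lambda A^D$ and therefore $\mathcal D^{-1}(A)=\Psi^{-1}(A^D)=\Lambda^{-1}A$. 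This replaces both your determinant argument and your decomposition argument with a one-line module computation.
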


\begin{proof} We know from [SW], Corollary 5.1.6.,  that $I_{K}$ is a one dimensional $K$-vector space. Moreover, since  $A_{K}$ is a
separable algebra,  we deduce from [SW], Theorem 5.1.8.  that $A_{K}= I_{K}\oplus {\mbox{Ker}( \varepsilon)} $. Let $u $ be a basis of
$I_{K}$.  Since
it is an integral, it follows that  $u ^{2}=\varepsilon (u)u$. Since $\varepsilon (I_{K})$ is non-zero, it follows that
$\varepsilon (u)$ is $\not\neq 0$. Therefore, replacing  $u$ by $u/\varepsilon (u)$, we obtain a new basis of $I_{K}$
which is a non-trivial idempotent.  We denote this  idempotent  by $e$. Since $R$ is an integral
domain, it follows that $\varepsilon (f)=1$ for any idempotent  $f$ of $I_{K}$. We conclude that $e$ is the unique idempotent of $I_{K}$.
Let $\Lambda$ be the fractional ideal of $R$ consisting of elements $x\in K$ such that $xe $ belongs to $A$. Then we have
$$I=I_{K}\cap A= \Lambda e\ \ .$$

\noindent We consider the  left $A_{K}^{D}$-module structure on $A_{K}^{D}$ defined by
$$<f*g, x>=<g, S^{D}(f)x>\ \ \forall x\ \in A_{K}\ \ .$$
Since $A_{K}$ is separable the trace form is non-degenerate and induces an isomorphism of $K$-vector spaces
$$\Psi : A_{K}\rightarrow A_{K}^{D}= \mbox {Hom}(A_{K}, K)\ \ .$$
We note that
$$<\Psi(fa), x>=Tr(fax)=Tr(aS^{D}(f)x)=<\Psi(a), S^{D}(f)x>=<f*\Psi(a), x> $$
for all $  a , x \in A_{K}, f\in A_{K}^{D}\ .$ Therefore $\Psi$ is an isomorphism of $A_{K}^{D}$-modules. It follows  from the
definition of the codifferent that $\mathcal{D}^{-1}(A)=\Psi^{-1}(A^D)$. We now  consider $\Psi (A)$.
Since $\Psi$ is an isomorphism of $A_{K}^{D}$-modules and since  $A=A^{D}I$ we obtain that
$\Psi (A)=\Psi (A^{D}\Lambda e )=\Lambda A^{D}\Psi (e )$. Therefore we are
reduced  to determining $\Psi (e )$.  Let $x$ be an element of $A_{K}$. From the direct sum  decomposition of $A_{K}$, it follows that $x$ can be written as a sum $\lambda e +x'$ with $x'\in \mbox{Ker} \varepsilon $ and $\lambda \in K$. Hence we have
$$<\Psi (e ), x>=Tr(e x)=Tr( e\lambda +e x ')\ \ .$$ We note that $e x'=0$.  Moreover, since
$e $ is a non-trivial idempotent whose $K$-span has dimension one, its trace is $1$. Therefore we have proved that $<\Psi (e ), x>=\lambda =\varepsilon (x)$
for all $x\in A_{K}$. We conclude that $\Psi(e)=\varepsilon $ which is the unit element  of $A^{D}_{K}$. Therefore we have
proved   that
$\Psi (A)=\Lambda A^{D}$ and thus  $\mathcal{D}^{-1}(A)=\Lambda ^{-1}A$ as required.
\end{proof}
\medskip
\vskip 0.1 truecm
\noindent {\bf Remark} Observe that $\Lambda$ is the $R$-ideal defined by
$$\Lambda=\varepsilon (I)\ . $$

\begin{cor} Assume that $I(A)$ is a free $R$-module. Then:
\begin{itemize}
\item[i)] $I(A^{D})=\Lambda^{-1}t$, where $t$ is the unique element of $A^{D}_{K}$ such that $te=1_{A_{K}}$.

\item[ii)] $Tr(x)=tx$ for any $x\in B_{K}$.

\item[iii)] $\mathcal{D}^{-1}(B)=\Lambda^{-1}B$.
\end{itemize}

\end{cor}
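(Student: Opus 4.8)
The plan is to establish the three assertions in order, leaning on Proposition 3.2 and on the Larson--Sweedler description of $A$ as a module over $A^{D}$. Write $I=I(A)$, let $\Lambda=\varepsilon(I)$ be the fractional ideal of Proposition 3.2 and its Remark, and let $e$ be the primitive idempotent with $I_K=Ke$ and $\varepsilon(e)=1$. Since $I_K=Ke$, the theorem of Larson and Sweedler recalled above makes $A_K$ a free $A_K^{D}$-module of rank one on the generator $e$, so the element $t\in A_K^{D}$ with $t\cdot e=1_{A_K}$ of the statement exists and is unique. To prove i): $I=\Lambda e$ is $R$-isomorphic to $\Lambda$ and is $R$-free, hence $\Lambda$ is a free fractional ideal and therefore principal, $\Lambda=R\lambda$ with $\lambda\in K^{\times}$. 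Then $\theta:=\lambda e$ is an $R$-basis of $I$, so by Lemma 2.2 there is a unique $\theta^{D}\in A^{D}$ with $\theta^{D}\theta=1_{A}$, and this $\theta^{D}$ is a free $R$-basis of $I(A^{D})$. From $\theta^{D}\theta=\lambda(\theta^{D}\cdot e)$ one gets $\theta^{D}\cdot e=\lambda^{-1}1_{A_K}$, whence $t=\lambda\theta^{D}$ by uniqueness of $t$; thus $I(A^{D})=R\theta^{D}=R\lambda^{-1}t=\Lambda^{-1}t$, and in particular $t$ is a left integral of $A_K^{D}$.

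To prove ii), which asserts $t\cdot x=Tr(x)\,1_{B_K}$ for all $x\in B_K$ (reading $Tr(x)\in K$ inside $B_K$ through $1_{B_K}$): the maps $x\mapsto t\cdot x$ and $x\mapsto Tr(x)1_{B_K}$ are $K$-linear endomorphisms of $B_K$ compatible with flat base change of $K$ — the trace is, and $t$ and the $A^{D}$-action are unchanged by base change of $A$ and $B$ — so, a field being faithfully flat over $K$, it suffices to check the identity after extending scalars to an algebraically closed field $\Omega\supseteq K$. Over $\Omega$ the separable Hopf algebra $A_{\Omega}$ is split, $A_{\Omega}^{D}\cong\Omega[\Gamma]$ with $\Gamma$ finite (so $A_{\Omega}\cong\mathrm{Map}(\Gamma,\Omega)$), and $B_{\Omega}$, corresponding to a torsor under the constant group scheme $\Gamma_{\Omega}$, is trivial over the algebraically closed field $\Omega$, so $B_{\Omega}\cong\mathrm{Map}(\Gamma,\Omega)$ with each $\gamma\in\Gamma\subseteq\Omega[\Gamma]$ acting by translation. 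Under $A_{\Omega}\cong\mathrm{Map}(\Gamma,\Omega)$ the idempotent $e$ becomes $\delta_{1_{\Gamma}}$ (the unique integral idempotent, by Proposition 3.2), and a direct computation of the action of $\Omega[\Gamma]$ identifies the element with $t\cdot\delta_{1_{\Gamma}}=1$ as $t=\sum_{\gamma\in\Gamma}\gamma$ (alternatively this follows from i) together with $I(\Omega[\Gamma])=\Omega\sum_{\gamma}\gamma$). For $f\in\mathrm{Map}(\Gamma,\Omega)$ and $\delta\in\Gamma$, $(t\cdot f)(\delta)=\sum_{\gamma\in\Gamma}f(\delta\gamma)=\sum_{\gamma'\in\Gamma}f(\gamma')$, so $t\cdot f$ is the constant function with value $\sum_{\gamma}f(\gamma)=Tr_{\mathrm{Map}(\Gamma,\Omega)/\Omega}(f)$. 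This proves ii) over $\Omega$, and hence over $K$.

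To prove iii): by Proposition 3.2, $\mathcal{D}^{-1}(A)=\Lambda^{-1}A$, and $\mathcal{D}^{-1}(B)$, $\Lambda^{-1}B$ are $R$-submodules of $B_K$. The defining isomorphism of a principal homogeneous space is, in particular, an isomorphism of $B$-algebras $\phi\colon B\otimes_{R}B\to B\otimes_{R}A$ with $B$ acting through the first factor, and $B\otimes_{R}A$ is the base change of $A$ along $R\to B$. The codifferent is compatible with flat base change of the base ring — as one sees by identifying $\mathcal{D}^{-1}(C/S)$ with $\mathrm{Hom}_{S}(C,S)$ inside $C\otimes_{S}\mathrm{Tot}(S)$ via the (generically non-degenerate) trace form — and here $\mathrm{Tot}(B)=B_K$, since $B$ is an order in the \'etale $K$-algebra $B_K$. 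Hence, inside $(B\otimes_{R}B)_K$,
\[\mathcal{D}^{-1}(B)\otimes_{R}B=\mathcal{D}^{-1}(B\otimes_{R}B/B)=\phi^{-1}\!\bigl(\mathcal{D}^{-1}(B\otimes_{R}A/B)\bigr)=\phi^{-1}\!\bigl(\Lambda^{-1}(B\otimes_{R}A)\bigr)=\Lambda^{-1}(B\otimes_{R}B)=(\Lambda^{-1}B)\otimes_{R}B,\]
using $R$-linearity of $\phi^{-1}$ and Proposition 3.2. As $B$ is faithfully flat over $R$, this gives $\mathcal{D}^{-1}(B)=\Lambda^{-1}B$.

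The heart of the matter is ii): once it is known that the normalised integral $t$, characterised by $t\cdot e=1_{A_K}$, acts on every $A$-equivariant commutative algebra exactly as its trace form, parts i) and iii) follow formally. The one place calling for some care is the compatibility of the codifferent with the (possibly non-domain) base change $R\to B$ used in iii); alternatively iii) can be read off from ii) after localising $R$ so that $\Lambda$ becomes principal.
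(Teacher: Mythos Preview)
Your argument is correct and runs parallel to the paper's for i) and ii): both identify $\theta^{D}=\lambda^{-1}t$ via Lemma~2.2, and both split $A_K$ over an extension to see that $t$ becomes $\omega_{\Gamma}=\sum_{\gamma}\gamma$ (the paper uses a finite splitting field $K'$ rather than an algebraic closure, and does not bother to trivialise $B$, since it is enough that each $\gamma$ acts as a $K'$-algebra automorphism of $B_{K'}$).

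The genuine difference is in iii). You invoke the general principle that the codifferent commutes with flat base change, identifying $\mathcal{D}^{-1}(B)\otimes_R B$ with $\mathcal{D}^{-1}(B\otimes_R B/B)$ directly. The paper is more cautious: it first proves the inclusion $\Lambda^{-1}B\subset\mathcal{D}^{-1}(B)$ from ii) (since $\theta^{D}=\lambda^{-1}t\in A^{D}$ sends $B$ into $B^{A}=R$), and for the reverse inclusion uses only the \emph{easy} containment $B\otimes_R\mathcal{D}^{-1}(B)\subset\mathcal{D}^{-1}(B\otimes_R B)$, computing $\mathcal{D}^{-1}(B\otimes_R A)=\Lambda^{-1}(B\otimes_R A)$ by hand via a separate lemma (Lemma~3.4) that determines $I(B\otimes_R A)=B\otimes_R\Lambda e$ and then reruns the argument of Proposition~3.2 over the base $B$. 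Your route is shorter and more conceptual; the paper's avoids having to justify base-change of the codifferent along the non-domain $R\to B$, the point you yourself flag as ``calling for some care''.
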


\begin{proof} Let $\lambda \in \Lambda$ be such that $\theta=\lambda e$ is a basis of $I(A)$. We note that
$\lambda^{-1}t$ is the unique element $\theta^{D} \in A^{D}_{K}$ such that $\theta^{D}\theta= 1$. Since there exists
such an element in $A^{D}$, we may conclude that $\theta^{D} \in A^{D}$. It  follows from Lemma 2.2 that $\theta^{D}$ is an $R$-basis
of $I(A^{D})$ and thus $i)$ is proved. We now fix an extension $K'/K$ as in Lemma 2.4 and we identify on the one hand the algebras
$A_{K'}$ and $\mbox {Map}(\Gamma, K')$ and on the other hand the algebras $A^{D}_{K'}$ and $K'[\Gamma]$. Since $A_{K}$ is contained in
$A_{K'}$ we observe that $e$ is the unique idempotent in $\mbox {Map}(\Gamma, K')$ such that $I_{K'}=K'e$. Therefore, as an element of
$\mbox {Map}(\Gamma, K')$, $e$ is defined by $e(\gamma)=1$ if $\gamma =1$ and $0$ otherwise. Let
$\omega_{\Gamma}=\sum_{\gamma\in \Gamma }\gamma$. One can easily check that $\omega_{\Gamma}$ is the unique element in
$K'[\Gamma]$ such that $\omega_{\Gamma}e=1_{A_{K'}}$. Therefore we deduce that $t=\omega_{\Gamma }$. We now have
$$Tr_{B_{K}/K}(x)=Tr_{B_{K'}/K'}(x)=\omega_{\Gamma }x=tx, \ \ \forall x \in B_{K}\ $$ as required. Let $\lambda$ be
a generator of $\Lambda$. It follows from $ii)$ that $Tr(\lambda^{-1}x)=(\lambda^{-1}t)x$, for any $x\in B$. Since we know from $i)$
that $\lambda^{-1}t \in A^{D}$ we deduce that $Tr(\lambda^{-1}x)\in B\cap K=R$ for any $x\in B$ and thus that $\Lambda^{-1}B$
is contained in $\mathcal{D}^{-1}(B)$. In order to prove the equality, since $B$ is faithfully flat over $R$, it suffices to prove
that the injection $\Lambda^{-1}B\rightarrow \mathcal{D}^{-1}(B)$ induces an isomorphism
$B\otimes_{R}\Lambda^{-1}B\simeq B\otimes_{R}\mathcal{D}^{-1}(B)$. We first observe that $B\otimes_{R}B$ (resp.
$B\otimes_{R}A$) is a subalgebra of $B_{K}\otimes_{K}B_{K}$ (resp. $B_{K}\otimes_{K}A_{K}$). Moreover since
$B_{K}\otimes_{K}B_{K}$ and $B_{K}\otimes_{K}A_{K}$ are both free and finite $B_{K}$-algebras we can consider  the traces over
$B_{K}$ of these algebras and therefore the  codifferents of $B\otimes_{R}B$ and $B\otimes_{R}A$ for these  traces  forms.
 Let $\varphi=(I_{d}\otimes 1, \alpha_{B }): B\otimes_{R}B\rightarrow B\otimes_{R} A$ be the isomorphism of $B$-algebras  introduced before. It follows from the definitions that
$\mathcal{D}^{-1}(B\otimes_{R}B)=\varphi^{-1}(\mathcal{D}^{-1}(B\otimes_{R}A))$. Moreover one easily checks that
$B\otimes_{R}\mathcal{D}^{-1}(B)$ is contained in $\mathcal{D}^{-1}(B\otimes_{R}B)$. Therefore we conclude that
in order to prove that $\Lambda^{-1}B\rightarrow \mathcal{D}^{-1}(B)$ is surjective it suffices to prove that
$\mathcal{D}^{-1}(B\otimes_{R}A)=B\otimes_{R}\Lambda^{-1}A$. Following the proof of Proposition 3.2 we note that the
previous equality will be a consequence of:

\begin{lem}
$$I(B\otimes_{R}A)=B\otimes_{R}I(A)=B\otimes_{R}\Lambda e\ .$$
\end{lem}
\begin{proof}It follows from the decomposition of $A_{K}$ as a direct sum of ideals that
$$B_{K}\otimes_{K}A_{K}=B_{K}e\oplus (B_{K}\otimes_{K}{\mbox Ker}(\varepsilon))  . $$
Moreover it is immediate that $B_{K}e \subset I(B_{K}\otimes_{K}A_{K})$. Any  element $x$ of $I(B_{K}\otimes_{K}A_{K})$
can be written  $x=\lambda e + y$ with $y\in B_{K}\otimes_{K}{\mbox Ker}(\varepsilon)$. This implies that
$y\in I(B_{K}\otimes_{K}A_{K})$ and therefore that    $ey=\varepsilon (e)y=y=0$. We conclude that
$B_{K}e = I(B_{K}\otimes_{K}A_{K})$. Let $\theta$ be an $R$-basis of $I(A)$.  Since $\theta =ue$,  where $u$ is a non-zero element of  $K$, it follows that $\theta $ is a  $B_{K}$-basis  of $I(B_{K}\otimes_{K}A_{K})$. Let $z$ be an element of $I(B\otimes_{R}A)$.
 There exists $\alpha \in B_{K}$ such that $z=\alpha \theta $. This implies that $\theta^{D}z=\alpha$ and thus that $\alpha\in
 B\otimes_{R}A$. Therefore we have proved that $I(B\otimes_{R}A)$ is contained in $B\otimes_{R}I(A)$. Since the other inclusion
 is obvious we have the equalities. This achieves the proof of the lemma and of the corollary.
 \end{proof}
 \end{proof}

 \begin{cor} We assume that $A_{K}$ is separable of rank $n$ and that $I(A)$ is $R$-free.
Let $\theta $ (resp $\theta^{D}$) denote an integral of $A$ (resp. $A^{D}$) such that $\theta^{D}\theta =1$. Then
$\varepsilon (\theta)\varepsilon^{D}(\theta^{D})=n $. In particular if $\Lambda=\varepsilon (I) $ and
$\Lambda ^{D}=\varepsilon ^{D}(I^{D})$ then $\Lambda \Lambda ^{D}= nR$.
\end{cor}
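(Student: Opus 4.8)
The statement to prove is the numerical identity $\varepsilon(\theta)\varepsilon^{D}(\theta^{D}) = n$ (equivalently $\Lambda\Lambda^{D} = nR$), under the hypotheses that $A_{K}$ is separable of rank $n$ and $I(A)$ is $R$-free. The plan is to reduce everything to the generic fiber and then compute in the split model $A_{K'} = \mathrm{Map}(\Gamma, K')$, $A^{D}_{K'} = K'[\Gamma]$ used repeatedly in the preceding lemmas and corollaries. Since $\theta, \theta^{D}$ are integrals and $\varepsilon, \varepsilon^{D}$ are ring homomorphisms to $R$, the product $\varepsilon(\theta)\varepsilon^{D}(\theta^{D})$ lies in $R$, and it suffices to compute it after base change to $K$, and then to $K'$.

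First I would write $\theta = \lambda e$ with $\lambda \in \Lambda$ a generator (so $\Lambda = \lambda R$), where $e$ is the primitive idempotent of Proposition~3.2 with $\varepsilon(e) = 1$; hence $\varepsilon(\theta) = \lambda$. By Corollary~3.3~i), the dual integral is $\theta^{D} = \lambda^{-1}t$ with $te = 1_{A_{K}}$, and by the proof of Corollary~3.3 we may identify $t = \omega_{\Gamma} = \sum_{\gamma\in\Gamma}\gamma$ in $K'[\Gamma]$. Therefore $\varepsilon^{D}(\theta^{D}) = \lambda^{-1}\varepsilon^{D}(\omega_{\Gamma}) = \lambda^{-1}|\Gamma|$. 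Multiplying, $\varepsilon(\theta)\varepsilon^{D}(\theta^{D}) = \lambda \cdot \lambda^{-1}|\Gamma| = |\Gamma|$. Since $A_{K'} = \mathrm{Map}(\Gamma, K')$ has $K'$-dimension $|\Gamma|$ and this dimension equals the rank $n$ of $A_{K}$ over $K$ (dimension is stable under the flat base change $K'/K$), we get $n = |\Gamma|$, which yields the claimed identity. The "in particular" clause then follows because $\Lambda = \varepsilon(I) = \lambda R$ and $\Lambda^{D} = \varepsilon^{D}(I^{D}) = \varepsilon^{D}(\theta^{D})R = \lambda^{-1}n\,R$, so $\Lambda\Lambda^{D} = nR$.

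The only genuinely delicate point is justifying the clean formula $\varepsilon^{D}(\omega_{\Gamma}) = |\Gamma| = n$: one must be careful that $\varepsilon^{D}$ restricted to $K'[\Gamma]$ is the counit of the group-algebra Hopf structure, sending each group-like $\gamma$ to $1$, and that this is compatible with the identification $A^{D}\otimes_{R}K' \cong A^{D}_{K'} \cong K'[\Gamma]$ of Lemma~2.4 — this compatibility is exactly what was checked there ("we may easily check that it respects the algebra and coalgebra structure of both sides"), so $\varepsilon^{D}$ corresponds to the counit of $K'[\Gamma]$. Once that is in hand, $\varepsilon^{D}(\omega_{\Gamma}) = \sum_{\gamma\in\Gamma}\varepsilon^{D}(\gamma) = \sum_{\gamma\in\Gamma}1 = |\Gamma|$, and the equality $|\Gamma| = n$ is the rank statement. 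An alternative, base-change-free route is to observe directly that $\theta^{D}\theta = 1_{A}$ forces $\varepsilon^{D}(\theta^{D})\varepsilon(\theta) = \varepsilon^{D}\!\big(\varepsilon(\theta)\theta^{D}\big) = \dots$; but tracking the idempotent $e$ and using $\mathrm{Tr}(e) = 1$ together with $\mathrm{Tr}(1_{A_{K}}) = n$ and $\mathrm{Tr}(x) = tx$ (Corollary~3.3~ii)) gives $n = \mathrm{Tr}(1) = \mathrm{Tr}(\theta^{D}\theta) = \mathrm{Tr}(\lambda^{-1}t\cdot\lambda e) = \mathrm{Tr}(te) = \mathrm{Tr}(1_{A_{K}}) $ — so instead one computes $n = \mathrm{Tr}(\varepsilon^{D}(\theta^{D})\varepsilon(\theta)\,e) = \varepsilon^{D}(\theta^{D})\varepsilon(\theta)\,\mathrm{Tr}(e) = \varepsilon^{D}(\theta^{D})\varepsilon(\theta)$, using that $\theta^{D}\theta = 1$ projects to $\varepsilon(\theta)\theta^{D} \cdot (\text{something})$ via the idempotent — this is cleaner and avoids $K'$ altogether, and I would likely present it this way, with the split-model computation relegated to a remark.
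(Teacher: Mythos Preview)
Your primary argument (base change to the splitting field $K'$ and computing $\varepsilon^{D}(\omega_{\Gamma})=|\Gamma|=n$ in $K'[\Gamma]$) is correct and complete. It is, however, a different route from the paper's: the paper works entirely over $K$ using the trace form on $A_{K}$. Concretely, the paper observes that $Tr(\theta^{D}\theta)=Tr(1_{A})=n$ and then invokes the $A_{K}$-equivariance of $Tr$ (Lemma~3.1) to write
\[
n=Tr\big((\theta^{D}\theta)\cdot 1_{A}\big)=Tr\big(\theta\cdot S^{D}(\theta^{D})1_{A}\big)=\varepsilon^{D}(\theta^{D})\,Tr(\theta),
\]
using $S^{D}(\theta^{D})=\theta^{D}$ and $\theta^{D}\cdot 1_{A}=\varepsilon^{D}(\theta^{D})1_{A}$. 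Then $\theta=\lambda e$ and $Tr(e)=1$ give $Tr(\theta)=\lambda=\varepsilon(\theta)$, finishing the proof. Your approach trades the equivariance step for an explicit identification of $\theta^{D}$ with $\lambda^{-1}\omega_{\Gamma}$; both are short, but the paper's avoids any passage to $K'$.

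Your ``alternative, base-change-free route'' is in fact exactly the paper's approach, but as written it is garbled: the line $n=Tr(1)=Tr(\theta^{D}\theta)=Tr(te)=Tr(1_{A_{K}})$ is circular, and the claim $n=Tr\big(\varepsilon^{D}(\theta^{D})\varepsilon(\theta)\,e\big)$ is asserted without justification (indeed $\theta^{D}\theta=1_{A}$, not $\varepsilon^{D}(\theta^{D})\varepsilon(\theta)\,e$). The missing ingredient is precisely the equivariance identity $Tr(gx\cdot y)=Tr(x\cdot S^{D}(g)y)$ applied with $g=\theta^{D}$, $x=\theta$, $y=1_{A}$; once you insert that, the trace argument becomes clean and is the one the paper gives.
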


\begin{proof} Let $Tr$ be the trace form on $A_{K}$. First observe that $Tr(\theta^{D}\theta)=n$. Moreover,  since the trace is
equivariant, it follows that
$$n=Tr(\theta^{D}\theta)=Tr(\theta. S^{D}(\theta^{D})1_{A})=\varepsilon^{D}(\theta^{D})Tr(\theta)\ .$$
Under our hypothesis,  it follows from Proposition 3.2  that there exists $\lambda \in  R$ such that
$$\Lambda=\lambda R\ \ \mbox{and}\  \ \theta=\lambda e\ .$$
It follows from the direct sum decomposition of $A_{K}$ that $Tr(e)=1$. Since $\varepsilon (e)=1$ we deduce  from the previous
equality that $Tr(\theta)=\lambda=\varepsilon (\theta)$ and so that $\varepsilon^{D}(\theta^{D})\varepsilon (\theta)=n $
and  $\Lambda^{D}\Lambda=nR$
\end{proof}
\medskip

\vskip 0.1 truecm
\noindent {\bf Remark.} It can be proved that   $\mathcal{D}(A)$ is the Fitting ideal of the module of differentials
$\Omega ^{1}_{A/R}$. It therefore follows from the lemma
that,  if  $n$ is a unit of $R$, then the module  $\Omega ^{1}_{A/R}$ is trivial. In this case the cover of schemes
$(\mbox{Spec}(B)\rightarrow \mbox{Spec}(R))$ is \'etale for any principal homogeneous space $B$.

 \subsection {Twists of a form by a principal homogeneous space}

 In this section we need a strengthening of hypothesis ${\bf H_{1}}$. The fact that this new hypothesis implies $\bf H_{1}$ follows from Lemma 2.4.
 \medskip

 \noindent {\bf Definition 2} {\it A finite and flat $R$-Hopf algebra $A$ satisfies hypothesis ${\bf H_{2}}$ when
   $A_{K}$ is a commutative separable $K$-algebra and the image under $\varepsilon $ of the set of integrals of $A$ is
   the square of a principal ideal of $R$. }
   \medskip

 Let $A$ be a Hopf algebra satisfying ${\bf H_{2}}$. We denote by $\Lambda^{1/2}$  a principal  ideal of $R$ such that
 $$(\Lambda^{1/2})^{2}=\Lambda=\varepsilon (I(A))\ . $$
 Then, for any principal homogeneous space $B$ of $A$, it follows from Corollary 3.3  that
 $$\mathcal{D}^{-1/2}(B)=\Lambda^{-1/2}B\  $$
 is a square root  of $\mathcal{D}^{-1}(B)$ and that $(\mathcal{D}^{-1/2}(B), Tr)$ is a projective and $A$-equivariant  symmetric bundle on
 $R$. Let us denote by $\lambda^{1/2} $  a generator of $\Lambda^{1/2}$, let $\theta $ be the generator $\lambda e$ of $I(A)$ and let
 $\theta^{D}$ be the unique element of $A^{D}$ such that $\theta^{D}\theta= 1_{A}$. Then, following the
 construction  of  Section 2.2, for any $A$-equivariant symmetric bundle
 $(M, q)$ and any principal homogeneous space  $B$ of $A$,  we can define    the {\it  twist of $(M, q) $ by $B$} (associated to $\theta^{D}$).
 \medskip

 \noindent {\bf Definition 3} {\it Let $A$ be a Hopf algebra satisfying ${\bf H_{2}}$,  let $(M, q)$ be an $A$-equivariant symmetric bundle
 and let $B$ be a principal homogeneous space of $A$. Define the algebraic twist of $(M, q)$ by $B$ as the $R$-symmetric bundle
 $$(\tilde M_{B}, \tilde q_{B})=(\mathcal{D}^{-1/2}(B)\otimes_{R} M, Tr\otimes q)^{A}\ .$$}

 \medskip

 \noindent {\bf Remarks }

 \noindent {\bf 1.}We observe that, since $\theta$ is defined up to the square of a unit of $R$ the same holds for $\theta^{D}$.
 Therefore, as observed in Remark  of Section 2, the definition of   $(\tilde M_{B}, \tilde q_{B})$
 is independent, up to isometry,  of the choice of $\theta$.

 \noindent {\bf 2.} It follows from Proposition 2.8 that under hypothesis ${\bf H_{2}}$ we can attach to $(M, q)$ an
 orthogonal representation $\rho: G=\mbox {Spec}(A)\rightarrow {\bf O}(q)$. With a view to generalizing this definition later on,
 we shall often refer to the twist of $(M, q)$ by $B$ as  {\it the twist of $(M, q )$ by $\rho$ and $X=\mbox{Spec}(B)$}.  We will denote this twist as  $(M_{\rho, X}, q_{\rho, X})$.

 \noindent {\bf 3.} Suppose that  $R=K$ is a field and that $L/K$ is a Galois extension with Galois group $\Gamma$. Let
 $(M, q)$ be the underlying symmetric bundle of an orthogonal representation $\rho: \Gamma \rightarrow {\bf O}(q)$.
 This is a situation where we can apply our previous construction. Let
  $A= \mbox {Map}(\Gamma, K)$ be the Hopf algebra defining the constant group scheme associated to $\Gamma$. Since
 $L$ is a principal homogeneous space for $A$  we can consider the twist of $(M, q)$ by $L$
 $$(\tilde M_{L}, \tilde q_{L})=(L\otimes_{K} M, Tr\otimes q)^{A}\ $$
 as introduced in Definition 3. It follows from [F], Theorem 1 and [CNET], Proposition 2.5,  that this new quadratic form  coincides with
 the one introduced by Fr\"ohlich in [F], Section 2.

 \section {Twists of a form and flat cohomology}

 Let $S$ be the scheme $\mbox{Spec}(R)$ and let $G$ be the $S$-group scheme defined by a Hopf algebra $A$. To any $R$-linear map
$$\alpha_{B}: B\rightarrow B\otimes_{R}A ,  $$
which endows $B$ with the structure of a comodule algebra over $A$, there corresponds a morphism of $S$-schemes
$$X\times_{S} G \rightarrow X \ . $$
In this correspondence the notion of $PHS$ corresponds to the notion of torsors for $G$ over $S$.

Following Milne [M], III, Section 4, we may associate to any
flat covering $\mathcal{U}=(\mathcal{U}_{i}\rightarrow S)_{i\in I}$ a set of cohomology classes $\check H^{1}(\mathcal{U}, G)$;
this is a set with a distinguished element. We define $\check H^{1}(S, G)$ to be the  direct limit over all coverings $\mathcal{U}$ of
$\check H^{1}(\mathcal{U}, G)$. From Theorem 4.3 and Proposition 4.6 in [M], it follows that there exists a one to one correspondence,
$[X]\rightarrow c(X)$,
between the isomorphism classes of $G$-torsors over $S$, that we denote by $H^{1}(S, G)$,  and elements of $\check H^{1}(S, G)$ under which the class of the trivial torsor (the class of $A$)
 corresponds to the distinguished element of $\check H^{1}(S, G)$.

 Let $(M, q)$ denote an $A$-equivariant symmetric bundle and assume that $A$ satisfies ${\bf H_{2}}$. As per Proposition 2.8 we can
 associate to $(M, q)$ a morphism of group schemes $\rho: G\rightarrow {\bf O}(q)$. It is routine to check that
 $\rho$ transforms $1$-cocycles on $G$ into
  $1$-cocycles on ${\bf O}(q)$  and thereby induces a map $\rho_{*}$ from
  $\check H^{1}(S, G)$ in $\check H^{1}(S, {\bf O}(q))$. The set $\check H^{1}(S, {\bf O}(q))$
  classifies the set of isomorphism classes of twisted forms of $(M, q)$ ([D-G], III, Section 5, n.2). Therefore  the class $\rho_{*}(c(X))$ defines, up to isometry,
  a unique symmetric bundle that we denote by  $(M_{\rho (X)}, q_{\rho(X)})$.  We now have at our disposal on the one hand
  the symmetric bundle $(M_{\rho (X)}, q_{\rho(X)}))$, which has an abstract definition in terms of class of a cocycle in a flat cohomology
  set, and on the other hand the algebraic twist  $(M_{\rho, X}, q_{\rho, X})$ given by a simple explicit formula
  (see Definition 3 in Section 3.2). The main goal of this section is to prove that the two bundles coincide.

   \begin {thm} There exists an isometry of  symmetric bundles
  $$(M_{\rho, X}, q_{\rho, X})\simeq (M_{\rho (X)}, q_{\rho(X)})\ . $$
  \end {thm}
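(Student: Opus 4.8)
The strategy is to reduce the abstract, cohomological twist to an explicit descent computation and then to identify the result of that computation with the algebraic twist of Definition 3. First I would unwind the definition of $(M_{\rho(X)},q_{\rho(X)})$: choosing a flat covering $\mathcal{U}=(\operatorname{Spec} B\to S)$ that trivializes the $G$-torsor $X$ (for instance $\mathcal U$ given by $B$ itself, since $B\otimes_R B\simeq B\otimes_R A$), the class $c(X)$ is represented by a $1$-cocycle with values in $G(B\otimes_R B)$, and $\rho_*(c(X))$ is the image cocycle in ${\bf O}(q)(B\otimes_R B)$. The symmetric bundle attached to this cocycle is obtained by the standard twisting construction: one takes $M_{B}=M\otimes_R B$ with its induced form $q_B=q\otimes \operatorname{Tr}$-type pairing (here using the trace form on $B$, suitably normalized by $\Lambda^{1/2}$ so that it is unimodular, i.e. working with $\mathcal D^{-1/2}(B)$), equipped with the descent datum twisted by the cocycle, and $(M_{\rho(X)},q_{\rho(X)})$ is the $R$-module of descent-invariant elements with the descended form.

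The key point is then that the cocycle coming from $\rho$ is exactly the cocycle that measures the difference between the given $A$-comodule structure on $M$ and the trivial one after base change to $B$. More precisely, because $B$ is a PHS, the two maps $B\otimes_R B\rightrightarrows B\otimes_R A$ (via $I_d\otimes 1$ and $\alpha_B$) and the comodule structure $\alpha_M$ together produce, on $M\otimes_R B$, the twisted descent datum; and the subobject cut out by this twisted descent datum is precisely the fixed-point module $(\mathcal D^{-1/2}(B)\otimes_R M)^{A}$ for the diagonal $A^D$-action, by Lemma \ref{fixedtrace} (iii). Concretely: descent-invariance for the twisted datum translates, via the PHS isomorphism $B\otimes_R B\simeq B\otimes_R A$ and the correspondence between $A$-comodule and $A^D$-module structures (Proposition 1.3 of [CEPT]), into the condition $g\cdot(b\otimes m)=g(1_A)(b\otimes m)$ for all $g\in A^D$, which by Lemma 2.1 is exactly the defining condition of $(\mathcal D^{-1/2}(B)\otimes_R M)^{A}$. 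Since $\mathcal D^{-1/2}(B)=\Lambda^{-1/2}B$ is the correct $A^D$-projective lattice on which $\operatorname{Tr}$ is unimodular and $A$-equivariant (Corollary 3.3 together with Proposition \ref{CohAl}), the fixed-point bundle is well-defined and equals $(\tilde M_B,\tilde q_B)$.

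It then remains to match the two bilinear forms. The descended form on $(M_{\rho(X)},q_{\rho(X)})$ is, by construction of the twist of a symmetric bundle, induced by $q\otimes(\text{trace pairing on }\mathcal D^{-1/2}(B))$ on $M\otimes_R \mathcal D^{-1/2}(B)$; restricting to the invariants and using the explicit formula $q^A(\theta^D m,\theta^D n)=q(m,\theta^D n)$ from Section 2.2 (and the independence of the choice of $\theta^D$ up to squares of units, Remark 2 after Proposition \ref{CohAl}) shows this descended form agrees with $\tilde q_B$. A clean way to organize the form comparison is to check it after the faithfully flat base change $R\to B$, where both bundles become $(M\otimes_R\mathcal D^{-1/2}(B),\,q\otimes\operatorname{Tr})$ with its evident form and the isometry is the identity; then descend.

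**Main obstacle.** The genuinely delicate step is the second one: making the identification of the twisted descent datum with the diagonal $A^D$-action completely precise, i.e. checking that the cocycle $\rho_*(c(X))\in \check H^1(\mathcal U,{\bf O}(q))$ really is the one whose associated descent datum on $M\otimes_R B$ has fixed module $(\mathcal D^{-1/2}(B)\otimes_R M)^A$. This requires carefully tracking the interplay of three structures — the PHS isomorphism $B\otimes_R B\simeq B\otimes_R A$, the comodule structure $\alpha_M$, and the $A^D$-module reformulation — through the normalization by the square root of the codifferent, and verifying that no sign or twist by the antipode $S^D$ is lost (here the hypothesis $S^2=I_A$ and Lemma 2.3, giving $S|_{I(A)}=\mathrm{id}$, are what guarantee symmetry and compatibility). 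Everything else is either a direct appeal to the lemmas of Sections 2 and 3 or a routine faithfully-flat-descent argument.
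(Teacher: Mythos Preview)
Your overall strategy coincides with the paper's: trivialize both bundles over the cover $\mathcal U=(X\to S)$ given by $B$ itself, and then compare the resulting descent cocycles in $\mathbf O(q)(B\otimes_R B)$. You also correctly locate the crux of the argument in your ``Main obstacle'' paragraph. What is missing, however, is the concrete device that makes this obstacle tractable, and here your citations of Lemma~\ref{fixedtrace}(iii) and Lemma~2.1 do not suffice.

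The paper's key input is an \emph{explicit} isometry
\[
\nu_T:(T\otimes_R M,\,q_T)\longrightarrow (T\otimes_R\tilde M_A,\,\tilde q_{A,T}),\qquad m\longmapsto \theta^{D}(\lambda^{-1/2}\theta\otimes m),
\]
valid for any flat $R$-algebra $T$ (Lemma~4.3), together with the identity $\theta^{D}(f\theta\otimes m)=\theta^{D}(\theta\otimes S^{D}(f)m)$ for $f\in A_T^{D}$. Composing $\nu_B$ with the isometry $\tilde\varphi^{-1}$ induced by the PHS isomorphism $\varphi:B\otimes_R B\simeq B\otimes_R A$ (Lemma~4.4) gives a trivialization of $(\tilde M_B,\tilde q_B)$ over $B$; the descent cocycle of this trivialization is then \emph{computed} to be $\rho(g)$, using the observation that $\Delta(\theta)$ is twist-invariant so that $(\varphi_1\circ\varphi_2^{-1})(\theta)=g^{-1}\theta$, and then the formula above to move $g^{-1}$ across the tensor. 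Without $\nu$ you have no candidate trivialization of the algebraic twist with which to compute a cocycle, and your claim that ``descent-invariance translates into $g\cdot(b\otimes m)=g(1_A)(b\otimes m)$'' remains an assertion rather than a verified identity. Note also a slip in your form comparison: after base change to $B$ the cohomological twist becomes $(M\otimes_R B,\,q_B)$ as a $B$-bundle, not $(M\otimes_R\mathcal D^{-1/2}(B),\,q\otimes\mathrm{Tr})$; it is precisely $\nu_B$ and $\tilde\varphi$ that bridge these two descriptions. Once the modules are identified through $\nu$, your faithfully-flat descent argument for the forms is fine; but the module identification is the substance of the proof.
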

  \vskip 0.1 truecm

 We keep the notation and the hypotheses of Section 3. We assume  that $A$ satisfies ${\bf H_{2}}$, and
 in particular we assume that the image under $\varepsilon $ of the set of integrals of $A$ is the square of a principal ideal of $R$. We fix a
 generator $\lambda^{1/2}$ of this ideal. Since $A_{K}$ is separable there exists  a
 finite extension $K'/K$ and a finite group $\Gamma $ such that $A_{K'}=\mbox{Map}(\Gamma, K')$ and $A_{K'}^{D}=K'[\Gamma]$.
 For the sake of notational simplicity we shall assume  that $K'=K$; the general case can  be  easily deduced. We let $e$ be the element of
 $A_{K}$ defined by $e(\gamma)=1$ if $\gamma=1$ and $0$ otherwise and we denote by  $\omega $ the element
 $\sum_{\gamma\in \Gamma}\gamma$ in $K[\Gamma]$. We have seen in section 3.2  that $\theta=\lambda e$ (resp.
 $\theta^{D}=\lambda^{-1}\omega)$ is a $R$-basis of $I(A)$ (resp. $I(A^{D})$) and that we have the equalities

 $$\theta ^{D}\theta=1_{A},\ \theta \theta^{D}=1_{A^{D}},\ A=A^{D}\theta,\ A^{D}=A\theta^{D}\ . $$

 \subsection {Representative of a torsor}

 Let $B$ be a PHS of $A$ and let $X=\mbox{Spec}(B)$ be the associated $G$-torsor.  It follows from the definition that there is a flat cover
 $\mathcal{U}=(X\rightarrow S)$ which trivializes $X$. More precisely
 the isomorphism
 $$\varphi=(i_{d}\otimes 1, \alpha_{B}):B\otimes_{R}B\rightarrow B\otimes_{R}A , $$  induces an isomorphism of
 $S$-schemes with $G$-action
  $$\Phi=\mbox{Spec}(\varphi): X\times_{S}G\rightarrow X\times_{X}X\ .$$
  Let $p_{1}$ (resp. $p_{2}$) denote the  first (resp. second) projection map $X\times_{S}X \rightarrow X$. For
  $1\leq i\leq 2$ the base change of $\Phi$ by $p_{i}$ defines an isomorphism of schemes with $G$-action
  $$\Phi_{i}: (X\times_{S}X)\times_{S}G \rightarrow (X\times_{S}X)\times_{S}X \ .$$
  We know from p. 134 in[M] that  $\Phi_{1}^{-1}\circ \Phi_{2}$ is a $1$-cocycle representing $c(X)$. We wish to understand
  $\Phi_{1}^{-1}\circ \Phi_{2}$ in terms of $B\otimes_{R}B$-valued points of $G$.

   Let $q_{1}$ (resp. $q_{2}$) denote the morphism
  of algebras $ B\rightarrow B\otimes_{R}B$, defined by $(q_{1}: x\rightarrow x\otimes1)$
  (resp. $q_{2}:x\rightarrow  1\otimes x $). Extending scalars by $q_{i}$ for $1\leq i\leq 2$, the map $\varphi $ induces an
  isomorphism
  $$\varphi_{i}: (B\otimes_{R}B)\otimes_{R}B\rightarrow (B\otimes_{R}B)\otimes_{R} A$$
  of $B\otimes_{R}B$-algebras  and $A^{D}$-modules. It is clear that
  $\Phi^{-1}_{1}\circ \Phi_{2}=\mbox{Spec}(\varphi_{2}\circ \varphi_{1}^{-1})$.

  Let  $C $ be  the algebra $B\otimes_{R}B$. We recall the identifications of Section 2.2:
  $$G(C)=\mbox{Hom}_{alg, R}(A, C)=\mbox{Hom}_{alg, C}(A_{C}, C)\ . $$
  We denote by
  $\underline {Aut}(A_{C})$ the group of  automorphisms of $C$-algebra and $A_{C}$-comodule.
  We  observe that
  $\varphi=\varphi_{2}\circ \varphi_{1}^{-1}$ is an element of this group. For any element $\psi$ of  $\underline {Aut}(A_{C})$
  and $f\ \in G(C)$ we obtain a element of $G(C)$ by considering $f\circ \psi$.

  \begin {lem} The map
  $$\theta: \underline {Aut}(A_{C})\rightarrow G(C)$$
  $$\psi\rightarrow \varepsilon\circ\psi$$
  is a group isomorphism.
  \end{lem}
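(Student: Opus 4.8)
The plan is to exhibit an explicit inverse to $\theta$ and to check that $\theta$ is a group homomorphism, both by unwinding the definitions of the objects involved. First I would recall that for the $C$-algebra $A_C = A\otimes_R C$, an element of $\underline{Aut}(A_C)$ is a $C$-algebra automorphism $\psi$ of $A_C$ which is simultaneously a morphism of $A_C$-comodules, i.e. it commutes with $\Delta\otimes\mathrm{id}_C$; dually, via the correspondence of Proposition 1.3 of [CEPT] recalled at the start of Section 2, this is the same as being $A^D_C$-linear. The key structural input is that $A_C$, as an $A^D_C$-module, is free of rank one on the integral $\theta$ (this is the Larson--Sweedler isomorphism $A\simeq A^D\otimes_R I(A)$, valid since $A$ satisfies ${\bf H_2}$, hence ${\bf H_1}$, so that $I(A)=R\theta$ and correspondingly $I(A^D)=R\theta^D$ with $\theta^D\theta=1_A$). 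So any $\psi\in\underline{Aut}(A_C)$ is determined by the single value $\psi(\theta)\in A_C$, and conversely.

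The inverse map I would write down is $G(C)\to\underline{Aut}(A_C)$ sending $f\colon A_C\to C$ to the map $\psi_f\colon A_C\to A_C$ characterised on comodule generators by the formula $\psi_f(a)=\sum_{(a)} f(a_{(1)})\,a_{(0)}$ (using the comodule structure $\alpha(a)=\sum a_{(0)}\otimes a_{(1)}$, extended $C$-linearly), which is exactly the ``translation'' by the $C$-point $f$ of $G$; in the classical case $A_C=\mathrm{Map}(\Gamma,C)$ this is precisely right-translation of functions by the group element corresponding to $f$. One checks directly that $\psi_f$ is a $C$-algebra homomorphism (because $f$ is an algebra homomorphism and $\alpha$ is an algebra map — $B$, and hence $C$, being a comodule \emph{algebra}), that it is an $A^D_C$-module map (equivalently a comodule map, by coassociativity of $\Delta$), and that it is invertible with inverse $\psi_{f^{-1}}$ where $f^{-1}=f\circ S^D$ is the inverse of $f$ in the group $G(C)$; so $\psi_f\in\underline{Aut}(A_C)$. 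Then $\varepsilon\circ\psi_f=f$ since $\sum_{(a)}f(a_{(1)})\varepsilon(a_{(0)})=f(\sum\varepsilon(a_{(0)})a_{(1)})=f(a)$ by the counit axiom, and conversely $\psi_{\varepsilon\circ\psi}=\psi$ for any $\psi\in\underline{Aut}(A_C)$: writing $\psi(a)=\sum a_{(0)}\,(\varepsilon\circ\psi)(\cdot)$-type identities and using that $\psi$ is a comodule morphism forces $\psi$ to have the translation form, so it is recovered from its composite with $\varepsilon$. Finally, $\theta$ is a homomorphism: for $\psi,\psi'\in\underline{Aut}(A_C)$ one computes $\varepsilon\circ(\psi\circ\psi') = (\varepsilon\circ\psi)\star(\varepsilon\circ\psi')$, where $\star$ is the group law on $G(C)=\mathrm{Hom}_{\mathrm{alg},C}(A_C,C)$ (convolution through $\Delta$), again by inserting the comodule-morphism property of $\psi$.

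The main obstacle I anticipate is purely bookkeeping rather than conceptual: keeping straight the three equivalent descriptions (comodule structure map $\alpha$, the $A^D$-action, and the ``points of $G$'') and verifying that $\psi_f$ as defined really is a comodule morphism, i.e. that $\alpha\circ\psi_f=(\psi_f\otimes\mathrm{id}_A)\circ\alpha$ — this is where coassociativity is used and where one has to be careful about which tensor factor $f$ is evaluated on. I would organise the verification so that the algebra-homomorphism property, the comodule-morphism property, and the invertibility are checked in that order, each as a short Sweedler-notation computation, and only then assemble the bijection and the homomorphism property; the group-law compatibility is the last line and follows formally once the translation formula is in hand. It is worth noting explicitly that the lemma does not actually need $\theta^D,\theta$ at all for the bijection — only for the remark that $\psi$ is determined by $\psi(\theta)$, which gives a second, more computational, route to the inverse if one prefers: solve $\psi(\theta)=\sum \theta^D_{?}\cdots$, but the comodule formula above is cleaner and I would use it.
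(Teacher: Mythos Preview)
Your approach is essentially the paper's: construct the inverse explicitly as ``translation by a $C$-point'' and verify the group-homomorphism property via the comodule condition. The paper does exactly this, deriving first from $(\psi\otimes\mathrm{id})\circ\Delta=\Delta\circ\psi$ the reconstruction formula $\psi(x)=\sum(\varepsilon\circ\psi)(x_{(0)})\,x_{(1)}$, then using it for injectivity, surjectivity (defining $\psi_\alpha(x)=\sum\alpha(x_{(0)})\,x_{(1)}$), and the homomorphism check. Your remarks about Larson--Sweedler and the generator $\theta$ are, as you note yourself, unnecessary; the paper does not use them.

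There is, however, one concrete slip in your formula, and it is exactly the bookkeeping issue you anticipated. You write $\psi_f(a)=\sum f(a_{(1)})\,a_{(0)}$, evaluating $f$ on the \emph{second} Sweedler leg. With the right $A_C$-comodule structure on $A_C$ given by $\Delta$, this $\psi_f$ is \emph{not} a comodule morphism unless $A$ is cocommutative: one computes
\[
(\psi_f\otimes\mathrm{id})\Delta(a)=\sum f(a_{(2)})\,a_{(1)}\otimes a_{(3)},\qquad
\Delta(\psi_f(a))=\sum f(a_{(3)})\,a_{(1)}\otimes a_{(2)},
\]
which differ in general. The correct formula (and the one the paper uses) evaluates $f$ on the \emph{first} leg: $\psi_f(a)=\sum f(a_{(0)})\,a_{(1)}$. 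With this correction all your verifications go through verbatim; in particular $\varepsilon\circ\psi_f=f$ still follows from the counit axiom, and the reconstruction $\psi=\psi_{\varepsilon\circ\psi}$ is exactly what one gets by applying $\varepsilon\otimes\mathrm{id}$ to $\Delta\circ\psi=(\psi\otimes\mathrm{id})\circ\Delta$.
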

  \begin{proof} Since any $\psi \in \underline {Aut}(A_{C})$ is a morphism of comodules it satisfies the equality
  $$(\psi\otimes id)\circ\Delta=\Delta\circ \psi.$$
  This implies that  for all  $x\in A_{C}$,
  $$\Delta(\psi(x))=\sum_{(x)}\psi(x_{0})\otimes x_{1} $$
  where $\Delta(x)=\sum_{(x)} x_{0}\otimes x_{1}$ and so $\psi(x)=\sum_{(x)} (\varepsilon\circ \psi)(x_{0})x_{1}$.
  For any $\psi_{1}$ and $\psi_{2}$ of $\underline {Aut}(A_{C})$  and $x\in A_{C}$ we obtain that
  $$\theta(\psi_{1})\theta(\psi_{2})(x)=\sum_{(x)}(\varepsilon\circ\psi_{1})(x_{0})(\varepsilon \circ \psi_{2})(x_{1})
  =(\varepsilon \circ \psi_{2})(\sum_{(x)}(\varepsilon\circ\psi_{1})(x_{0})x_{1}).  $$
 This shows that $$\theta(\psi_{1})\theta(\psi_{2})(x)=(\varepsilon \circ \psi_{2}\circ \psi_{1})(x)=
 \theta((\psi_{1})\circ(\psi_{2}))(x) .$$We conclude that $\theta$ is a group homomorphism. Suppose now that
 $\theta(\psi)=\varepsilon$. Then it follows from the previous equalities that for all $x\in A_{C}$
 $$\psi(x)=\sum_{(x)}\varepsilon (x_{0})x_{1}=x, . $$
 Therefore $\theta $ is an injection. Let us now consider an element $\alpha\in G(C)$ and denote by $\psi$ the $C$-endomorphism of
 $A_{C}$ defined by
 $$\psi(x)=\sum_{(x)}\alpha(x_{0})x_{1}.$$
 Since $\alpha$ is a morphism of algebras it follows that $\psi $ is a morphism of $C$-algebras on $A_{C}$. Moreover, using the equality
 of morphisms
 $$(id\otimes\Delta)\circ\Delta=(\Delta\otimes id)\Delta$$ one can check that $\psi$ is a morphism of $A_{C}$-comodules. Finally
 we observe that for all $x\in A_{C}$
 $$\theta(\psi)(x)=\sum_{(x)}\alpha(x_{0})\varepsilon(x_{1})=\alpha(\sum_{(x)}x_{0}\varepsilon(x_{1}))=\alpha(x).$$ Therefore $\theta(\psi)=\alpha$. This proves that $\theta $ is onto and ends the proof of the lemma.
 \end{proof}
\medskip

  We deduce from this lemma  that  the map $\Psi= \mbox {Spec}(\psi)\rightarrow \varepsilon \circ \psi$ is
  an isomorphism of groups from $\mbox {Aut}(\mbox {Spec}(C)\otimes_{S}G)$ onto $G(C)$. We identify these groups via this isomorphism.
  Under this identification,  we conclude that $g=\varepsilon\circ \varphi$, where
  $\varphi=\varphi_{2}\circ \varphi_{1}^{-1}$ is the element of $\underline {Aut}(A_{C}) $ introduced previously, is a $1$-cocycle
  representative of $c(X)$ in $G(C)$.
  \vskip 0.1 truecm

  \noindent {\bf Remark } As we have seen previously,  for any $x\in A_{C}$, with $\Delta (x)=\sum_{(x)} x_{(0)}\otimes x_{(1)}$
  we  have the
  equality:
  $$\varphi(x)=\sum (\varepsilon \circ \varphi) (x_{(0)}) x_{(1)}\ .$$
  In the  case where $\Delta (x)$ is invariant under the twist map (which is the map induced by $c\otimes d\rightarrow d\otimes c$), this
  last equality can be written
  $$\varphi(x)=(\varepsilon \circ \varphi)x , $$
  where $A_{C}$ is endowed with its structure of left $A^{D}_{C}$-module and where  $\varepsilon \circ \varphi$ is  considered as
  an element of $A^{D}_{C}$ via  the inclusion $G(C)\subset A^{D}_{C}$. When $A$ is cocommutative, since every element of
  $A\otimes A$ is invariant under the twist map, the automorphism $\varphi$ is the map $x\mapsto (\varepsilon \circ \varphi)x$.

  \subsection {Proof of Theorem 4.1}

  Let $(M, q)$ be an $A$-equivariant symmetric bundle and let $B$ be a PHS of $A$. We consider the twist
    of $(M, q)$ by $B$ defined by

    $$(M_{\rho, X}, q_{\rho, X})=(\tilde M_{B}, \tilde q_{B})=(\lambda^{-1/2}B\otimes_{R} M, Tr\otimes q)^{A}\ . $$

    The strategy for the proof of the theorem is to show that the flat covering $\mathcal{U}=(X\rightarrow S)$,  which
  trivializes $X$ as a $G$-torsor,  likewise trivializes the symmetric bundle $(M_{\rho, X}, q_{\rho, X})$. With this in view  we
  now construct an isometry of symmetric bundles
  \begin{equation}
  (M_{B}, q_{B})\simeq (B\otimes_{R}\tilde M_{B},  \tilde q_{B, B}),
  \end{equation}
  where $( \tilde q_{B, B})$ denotes the form $\tilde q_{B}$ extended to $B\otimes_{R}\tilde M_{B}$.
  This construction will be achieved via the next two lemmas. The results of  Section 4.1 provide us with  a representative of $X$ in
  $\check H^{1}(\mathcal{U}, G)$. We will use the  previous isometry  to show that the image under $\rho_{*}$
  of this cocycle is a representative of the class of $(M_{\rho, X}, q_{\rho, X})$.

 \begin {lem} Let $T$ be a finite flat $R$-algebra. Then

 \begin{itemize}
\item[i)]  For any $f\in A^{D}_{T}$ and $m\in M_{T}$ one has
$$\theta^{D}(f\theta \otimes m)=\theta^{D}(\theta\otimes S^{D}(f)m)\ . $$

\item [ii)] The map $m\mapsto  \theta^{D}(\lambda^{-1/2}\theta \otimes m)$ induces an isometry $\nu_{T}$ of symmetric
bundles from $(T\otimes_{R}M, q_{T})$ onto $(T\otimes_{R}\tilde M_{A}, \tilde q_{A, T})$ where
$(\tilde M_{A}, \tilde q_{A})$ is the symmetric bundle $(\lambda ^{-1/2}A\otimes_{R} M, Tr\otimes q)^{A}$.

\end{itemize}

\end{lem}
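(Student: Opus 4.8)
The plan is to prove (i) by the reduction-to-the-group-algebra device already used for Lemma 2.4, and then to deduce (ii) from (i), from the non-degeneracy of $q$, and from the description of the fixed submodule in Lemma~\ref{fixedtrace}.

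\emph{Part (i).} Since $A^{D}$ is $R$-projective, hence $R$-torsion-free, and $T$ is flat over $R$, all the modules occurring embed $A^{D}$-equivariantly into their scalar extensions to $T'=T\otimes_{R}K'$, over which $A^{D}_{T'}\simeq T'[\Gamma]$ in the notation of Lemma 2.4; so it suffices to verify the identity over $T'[\Gamma]$. Writing $f=\sum_{\gamma\in\Gamma}x_{\gamma}\gamma$ and $\theta^{D}=\lambda^{-1}\sum_{\delta\in\Gamma}\delta$, and using that a group-like $\delta$ acts diagonally through $\delta\otimes\delta$, one gets
$$\theta^{D}(f\theta\otimes m)=\lambda^{-1}\sum_{\delta}(\delta f)\theta\otimes\delta m=\lambda^{-1}\sum_{\delta,\gamma}x_{\gamma}\,(\delta\gamma)\theta\otimes\delta m ;$$
reindexing by $\delta'=\delta\gamma$ and recognising $S^{D}(f)=\sum_{\gamma}x_{\gamma}\gamma^{-1}$ rewrites the right-hand side as $\lambda^{-1}\sum_{\delta'}\delta'\theta\otimes\delta'S^{D}(f)m=\theta^{D}(\theta\otimes S^{D}(f)m)$, which is (i). (Intrinsically this is the identity $\sum t_{(1)}f\otimes t_{(2)}=\sum t_{(1)}\otimes t_{(2)}S^{D}(f)$ for the two-sided integral $t=\theta^{D}$, checked in the same way.)

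\emph{Part (ii).} Set $Q=Tr\otimes q$ on $\lambda^{-1/2}A\otimes_{R}M$ and $\nu=\nu_{R}\colon m\mapsto\theta^{D}(\lambda^{-1/2}\theta\otimes m)$. This map is $R$-linear and lands in $(\lambda^{-1/2}A\otimes_{R}M)^{A}=\tilde M_{A}$, because $\theta^{D}z$ is $A^{D}$-invariant for every $z$ (Lemma 2.1 and Lemma~\ref{fixedtrace}(i)). As $\lambda^{-1/2}A$ is $A^{D}$-free of rank one on $\lambda^{-1/2}\theta$ and $M$ is $R$-projective, $\lambda^{-1/2}A\otimes_{R}M$ is $A^{D}$-projective by Proposition~\ref{again}, so Lemma~\ref{fixedtrace}(i) gives $\tilde M_{A}=\theta^{D}(\lambda^{-1/2}A\otimes_{R}M)$; writing a typical generator as $\theta^{D}(\lambda^{-1/2}f\theta\otimes m)$ with $f\in A^{D}$ and invoking (i) shows it equals $\theta^{D}(\lambda^{-1/2}\theta\otimes S^{D}(f)m)=\nu(S^{D}(f)m)$, so $\nu$ is onto $\tilde M_{A}$. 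For the isometry, by the very definition of $\tilde q_{A}=Q^{A}$ (Section 2.2) one has $\tilde q_{A}(\nu(m),\nu(n))=Q\bigl(\theta^{D}(\lambda^{-1/2}\theta\otimes m),\,\lambda^{-1/2}\theta\otimes n\bigr)\in R$; extending scalars to $K'$, where $A_{K'}\simeq\mathrm{Map}(\Gamma,K')$, $\theta=\lambda e$ with $e$ the idempotent supported at $1$, $\theta^{D}=\lambda^{-1}\sum_{\gamma}\gamma$, and $\gamma\cdot e=:e_{\gamma^{-1}}$ the idempotent supported at $\gamma^{-1}$, the powers of $\lambda^{1/2}$ cancel and the sum collapses to $\sum_{\gamma}Tr(e_{\gamma^{-1}}e)\,q(\gamma m,n)=q(m,n)$, since $e_{\gamma^{-1}}e$ is $e$ when $\gamma=1$ and $0$ otherwise while $Tr(e)=1$. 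Thus $\tilde q_{A}(\nu(m),\nu(n))=q(m,n)$; in particular $\nu$ is injective because $q$ is non-degenerate, so $\nu$ is a bijective isometry $(M,q)\to(\tilde M_{A},\tilde q_{A})$. Finally $\nu_{T}=\nu_{R}\otimes_{R}T$ (both are given by the same formula, and $\nu_{R}(m_{0})\otimes t=\theta^{D}(\lambda^{-1/2}\theta\otimes(m_{0}\otimes t))$), and fixed points commute with the flat base change $R\to T$, so $\nu_{T}$ is a bijective isometry from $(T\otimes_{R}M,q_{T})$ onto $(T\otimes_{R}\tilde M_{A},\tilde q_{A,T})$.

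\emph{Main obstacle.} The only part with genuine content is (i): the reindexing identity for the integral $\theta^{D}$. Once it is available, everything in (ii) is formal — surjectivity is a direct application of Lemma~\ref{fixedtrace}, the isometry is a one-line computation in the split case $\mathrm{Map}(\Gamma,K')$, and injectivity is free from non-degeneracy of $q$. The secondary bookkeeping — the cancellation of the powers of $\lambda^{1/2}$, and the flat-base-change compatibilities for $R\to T$ and $R\to K'$ — is routine but relies on the torsion-freeness of the Hopf modules involved.
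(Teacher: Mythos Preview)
Your argument is correct and parallels the paper's proof closely: part (i) is the same group-algebra reindexing (the paper reduces to $T=R$ and then to $A^{D}_{K}=K[\Gamma]$, you base-change to $T'=T\otimes_{R}K'$, but the computation is identical), and your surjectivity argument in (ii) via Lemma~\ref{fixedtrace} plus (i) is exactly the paper's. There are two tactical differences worth noting. First, for injectivity the paper builds an explicit left inverse $\mu\colon x\mapsto\lambda^{1/2}\tilde\varepsilon(x)$, where $\tilde\varepsilon(a\otimes m)=\varepsilon(a)m$, and checks $\tilde\varepsilon(\theta^{D}(\theta\otimes m))=(\theta\theta^{D})m=m$ via a Sweedler computation; you instead deduce injectivity from the isometry and non-degeneracy of $q$, which is shorter but loses the explicit formula for $\nu^{-1}$ (this formula is not used elsewhere in the paper, so nothing is lost for the present purposes). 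Second, for the isometry the paper stays intrinsic and computes $\beta=\sum Tr((\theta^{D}_{(0)}\theta)\theta)\theta^{D}_{(1)}=\lambda$ by showing $(\theta^{D}_{(0)}\theta)\theta=\langle\theta^{D}_{(0)},\theta\rangle\theta$ and $Tr(\theta)=\lambda$, whereas you pass to the split form $A_{K'}\simeq\mathrm{Map}(\Gamma,K')$ and use the orthogonal idempotents $e_{\gamma}$; both reach $q(m,n)$ in one line once set up, and both rely on $Tr(e)=1$.
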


\begin{proof} We first observe that it suffices to prove the lemma when $T=R$. The general case will follow by extension of scalars. Let
$f$ be an element of $A^{D}$. Since $A^{D}$ is contained in $A^{D}_{K}=K[\Gamma ]$ we write $
f=\sum _{\gamma \in \Gamma }x_{\gamma}\gamma$ with $x_{\gamma}\in K$. Since $A^{D}$ acts diagonally over
$A\otimes_{R}M$ and since $\theta^{D}\gamma=\theta^{D}$,   we obtain  that for any
$m \in M $  and  $\gamma \in \Gamma $
$$\theta^{D}(f\theta \otimes m)=\sum_{\gamma \in \Gamma}\theta^{D}(x_{\gamma}\gamma \theta\otimes m)=
\sum_{\gamma \in \Gamma}\theta^{D}\gamma( \theta\otimes x_{\gamma}\gamma^{-1}m)
=\theta^{D}(\theta\otimes S^{D}(f)m), $$
as required.

Let  $\nu: M\rightarrow \tilde M_{A}$ be the $R$ linear map defined by $m\mapsto \theta^{D}(\lambda^{-1/2}\theta\otimes m)$.
We start by  proving  that $\nu$ is surjective. Let $y$ be an element of $M^{A}$. We deduce  from Lemma 2.5 the existence of  a finite set
of elements $x_{i}$ of $A$ and $m_{i}$ of $M$ such that
$y=\sum_{i}\theta^{D}(\lambda^{-1/2}x_{i}\otimes m_{i})$. Since $\theta $ is an $A^{D}$-basis of $A$,
for an integer $i$,  there exists an
element $f_{i}$ of $A^{D}$ such that $x_{i}=f_{i}\theta $.
Using  i) we deduce that
$$y=\sum_{i}\theta^{D}(f_{i}\lambda^{-1/2}\theta \otimes m_{i})=
\sum_{i}\theta^{D}(\lambda^{-1/2}\theta\otimes S^{D}(f_{i})m_{i})=\theta^{D}(\lambda^{-1/2}\theta\otimes x), $$
with $x=\sum_{i}S^{D}(f_{i})m_{i}$.  So we have found  $x \in M$  such that  $y=\nu (x)$.

Let us now consider
$\tilde \varepsilon: A_{K}\otimes_{K}M_{K}\rightarrow M_{K}$ defined by $a\otimes m\mapsto <\varepsilon, a>m$. Since the
action of $A^{D}_{K}$ is diagonal, for any $m\in A_{K}$ we have
$$\tilde \varepsilon (\theta^{D}(\theta\otimes m))=(\alpha)m\ , $$
where  $\Delta^{D}(\theta^{D})=\sum \theta^{D}_{(0)}\otimes \theta^{D}_{(1)}$
and $\alpha=\sum <\varepsilon, \theta^{D}_{(0)}\theta)>\theta^{D}_{(1)}$.  We first observe that
$<\varepsilon, \theta^{D}_{(0)}\theta)>=<\theta^{D}_{(0)}, \theta >$. Moreover, since $A$ is commutative we know that
$A^{D}$ is cocommutative. It follows from these facts that
$$\alpha=\sum <\theta_{(0)}, \theta>\theta^{D}_{(1)}=
\sum <\theta^{D}_{(1)}, \theta>\theta^{D}_{(0)}=\theta \theta^{D}\ . $$ Since,  as recalled at the beginning of this section,
we know that $\theta \theta^{D}=1_{A^{D}}$, then  we have proved that
$$\tilde \varepsilon (\theta^{D}(\theta\otimes m))=m, \ \forall m \in M_{K}\ .$$ We conclude that
$\nu$ is an isomorphism whose inverse is given by $\mu: x\mapsto \lambda^{1/2}\tilde \varepsilon (x)$. In order to complete
the proof of the lemma we must show that $\nu$ is an isometry. Hence we have to check that
$$\tilde q_{A}(\theta^{D}(\lambda^{-1/2}\theta\otimes m), \theta^{D}(\lambda^{-1/2}\theta\otimes m)')=
q(m, m'), \ \forall m, m' \in M\ . $$
The proof now consists of a  verification of this equality. It follows from the definitions that
$$\tilde q_{A}(\theta^{D}(\lambda^{-1/2}\theta\otimes m), \theta^{D}(\lambda^{-1/2}\theta\otimes m'))=
\lambda^{-1}(Tr\otimes q)(\theta^{D}(\theta\otimes m), \theta\otimes m')\ .$$
Since the action of $A^{D}$ on $A\otimes_{R}M $ is diagonal we can write
$\theta^{D}(\theta\otimes m)$ as the sum $\sum \theta^{D}_{(0)}\theta \otimes \theta^{D}_{(1)}m$ and so
$$(Tr\otimes q)(\theta^{D}(\theta\otimes m), \theta\otimes m')=
\sum Tr((\theta^{D}_{(0)}\theta)\theta)q(\theta^{D}_{(1)}m, m')\ . $$
It follows from the previous equalities that
$$\tilde q_{A}(\theta^{D}(\lambda^{-1/2}\theta\otimes m), \theta^{D}(\lambda^{-1/2}\theta\otimes m'))=
\lambda^{-1}q(\beta m, m'), \ \mbox{with }\ \beta= \sum (Tr(\theta^{D}_{(0)}\theta)\theta)\theta^{D}_{(1)}\ .$$
  Our goal is to compute $\beta$.
We start by evaluating $(\theta^{D}_{(0)}\theta)\theta$.   Writing  $\Delta (\theta)=\sum \theta_{(0)}\otimes \theta_{(1)}$
we obtain  that $\theta^{D}_{(0)}\theta=\sum \theta_{(0)}<\theta^{D}_{(0)}, \theta_{(1)}>$. Hence
$$(\theta^{D}_{(0)}\theta)\theta= \sum \theta_{(0)}\theta <\theta^{D}_{(0)}, \theta_{(1)}>
=\sum \varepsilon(\theta_{(0)})\theta <\theta^{D}_{(0)}, \theta_{(1)}>=
\theta <\theta^{D}_{(0)}, \sum \varepsilon (\theta_{(0)})\theta_{(1)}> .$$
These equalities show that $(\theta^{D}_{(0)}\theta)\theta= <\theta^{D}_{(0)}, \theta>\theta$ and   therefore that

$$Tr((\theta^{D}_{(0)}\theta)\theta)=<\theta^{D}_{(0)}, \theta>Tr(\theta)\ . $$ It now follows from  Corollary 3.3 that
$Tr(\theta)=\lambda te= \lambda$.  We conclude  that
$$\beta= \lambda\sum <\theta^{D}_{(0)}, \theta>\theta^{D}_{(1)}=\lambda \ , $$
since,  as observed before,  $\sum <\theta^{D}_{(0)}, \theta>\theta^{D}_{(1)}=\theta \theta^{D}=1_{A^{D}}$.
Hence  $\lambda^{-1}q(\beta m, m')=q(m, m')$,  as required.
\end{proof}

\vskip 0.2 truecm

\noindent {\bf Remark } As a  consequence of the lemma we observe  that there is an isometry
          $$(M, q)\simeq (\tilde M_{A}, \tilde q_{A})$$
which proves  Theorem 4.1  in the case when $X$ is the trivial torsor.

We now return to  the isomorphism $\varphi: B\otimes _{R}B\rightarrow  B\otimes _{R}A $ introduced at the beginning of this section.
This morphism induces an isomorphism
$$\tilde \varphi=(\varphi\otimes Id): B\otimes _{R}(\lambda^{-1/2}B\otimes_{R}M)\rightarrow
 B\otimes _{R}(\lambda^{-1/2}A\otimes_{R}M) $$
 of $B$ and $A^{D}$-modules. Recall that $A^D$ acts on the left-hand side  (resp. the right-hand side) via its diagonal action
 on $(\lambda^{-1/2}B\otimes_{R}M)$ (resp. $(\lambda^{-1/2}A\otimes_{R}M)$). Therefore,  taking fixed points by $A^{D}$,
 we see that $\tilde \varphi$ induces an isomorphism of $B$-modules
 $$\tilde \varphi: B\otimes_{R}\tilde M_{B}\rightarrow B\otimes_{R}\tilde M_{A}\ . $$

 \begin {lem} The isomorphism $\tilde \varphi$ induces an isometry of symmetric bundles
 $$(B\otimes_{R}\tilde M_{B}, \tilde q_{B, B})\simeq  (B\otimes_{R}\tilde M_{A}, \tilde q_{A, B}). $$
 \end {lem}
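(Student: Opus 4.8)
The plan is to realise $\tilde\varphi$ as the image, under the fixed-point construction of Section~2.2, of an isometry of projective $A$-equivariant symmetric bundles over $B$, and to reduce the isometry property to the single fact that $\varphi$ is an isomorphism of $B$-algebras.

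First I would record that the construction $(N,q)\mapsto(N^{A},q^{A})$ commutes with the flat base change $-\otimes_{R}B$ (with $A^{D}$ acting trivially on $B$): being defined by a kernel, $(-)^{A}$ commutes with flat base change, and for a projective $A^{D}$-module $N$ one has $(B\otimes_{R}N)^{A}=\theta^{D}(B\otimes_{R}N)=B\otimes_{R}\theta^{D}N=B\otimes_{R}N^{A}$ by Lemma~\ref{fixedtrace} since $\theta^{D}$ is $R$-linear; moreover, since the data defining $q^{A}$ (in particular the fixed generator $\theta^{D}\in I(A^{D})$) live over $R$, the base change of $q^{A}$ is $(q_{B})^{A}$. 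Applying this to the projective $A$-equivariant symmetric bundles $(\mathcal{D}^{-1/2}(B)\otimes_{R}M,\,Tr\otimes q)$ and $(\mathcal{D}^{-1/2}(A)\otimes_{R}M,\,Tr\otimes q)$ — both carrying the diagonal $A^{D}$-action; that these are projective $A$-equivariant symmetric bundles is the content of the discussion preceding Definition~3 (together with its special case $B=A$ and Lemma~3.1) — we obtain isometries
$$(B\otimes_{R}\tilde M_{B},\,\tilde q_{B,B})\simeq\bigl(B\otimes_{R}\mathcal{D}^{-1/2}(B)\otimes_{R}M,\,(Tr\otimes q)_{B}\bigr)^{A}\ ,$$
$$(B\otimes_{R}\tilde M_{A},\,\tilde q_{A,B})\simeq\bigl(B\otimes_{R}\mathcal{D}^{-1/2}(A)\otimes_{R}M,\,(Tr\otimes q)_{B}\bigr)^{A}\ ,$$
under which $\tilde\varphi$ is induced by $\varphi\otimes\mathrm{Id}_{M}$.

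It therefore suffices to prove that $\varphi\otimes\mathrm{Id}_{M}$ is an isometry of the two $A$-equivariant symmetric $B$-bundles on the right, before passing to $A$-fixed points; for the fixed-point construction visibly sends an isometry to an isometry: if $h\colon(N_{1},q_{1})\to(N_{2},q_{2})$ is an $A^{D}$-linear isometry then $h(\theta^{D}N_{1})\subseteq\theta^{D}N_{2}$, and for $x=\theta^{D}u$, $y=\theta^{D}v$ in $N_{1}^{A}$ one has $q_{2}^{A}(h(x),h(y))=q_{2}(\theta^{D}h(u),h(v))=q_{2}(h(\theta^{D}u),h(v))=q_{1}(\theta^{D}u,v)=q_{1}^{A}(x,y)$. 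Now $\varphi\otimes\mathrm{Id}_{M}$ is an isomorphism of $B$-modules and of $A^{D}$-modules (for the diagonal actions), by the paragraph preceding the lemma, so the only remaining point is that it preserves the bilinear forms; since each of these forms is the tensor product of the base-changed trace form with $q$ and $\varphi\otimes\mathrm{Id}_{M}$ is the identity on the $M$-factor, we are reduced to showing that $\varphi$ carries the base-changed trace form on $B\otimes_{R}\mathcal{D}^{-1/2}(B)$ onto that on $B\otimes_{R}\mathcal{D}^{-1/2}(A)$.

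This last point is where the torsor structure enters, and it is the only real obstacle. Since $\varphi\colon B\otimes_{R}B\to B\otimes_{R}A$ is an isomorphism of $B$-algebras it preserves the trace form over $B$, i.e. $Tr_{B\otimes_{R}A/B}(\varphi(w))=Tr_{B\otimes_{R}B/B}(w)$ for all $w\in B\otimes_{R}B$ (the trace of an endomorphism is invariant under conjugation by the $B$-module isomorphism $\varphi$). Because $\varphi(B\otimes_{R}B)=B\otimes_{R}A$ and $\varphi$ is $R$-linear, $\varphi$ carries $B\otimes_{R}\mathcal{D}^{-1/2}(B)=\lambda^{-1/2}(B\otimes_{R}B)$ onto $\lambda^{-1/2}(B\otimes_{R}A)=B\otimes_{R}\mathcal{D}^{-1/2}(A)$; and a short computation with a local $R$-basis of $B$ gives $Tr_{B\otimes_{R}B/B}(1\otimes z)=Tr_{B/R}(z)$, so that, by Corollary~3.3, the base-changed trace form on $B\otimes_{R}\mathcal{D}^{-1/2}(B)$ is $(u,v)\mapsto Tr_{B\otimes_{R}B/B}(uv)$ (extending $Tr_{B\otimes_{R}B/B}$ $K$-linearly to $\lambda^{-1}(B\otimes_{R}B)$), and similarly on the $A$-side. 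Hence, for $u,v\in B\otimes_{R}\mathcal{D}^{-1/2}(B)$, writing $uv=\lambda^{-1}w$ with $w\in B\otimes_{R}B$,
$$Tr_{B\otimes_{R}A/B}\bigl(\varphi(u)\varphi(v)\bigr)=\lambda^{-1}Tr_{B\otimes_{R}A/B}\bigl(\varphi(w)\bigr)=\lambda^{-1}Tr_{B\otimes_{R}B/B}(w)=Tr_{B\otimes_{R}B/B}(uv)\ ,$$
which is precisely the required compatibility of the trace forms. Combining this with the two previous paragraphs yields the isometry $\tilde\varphi$. In short, the whole argument rests on the observation that the $B$-algebra isomorphism $\varphi$ automatically matches the codifferents $\mathcal{D}^{-1}(B\otimes_{R}B/B)$ and $\mathcal{D}^{-1}(B\otimes_{R}A/B)$, and hence — once the square root $\lambda^{1/2}$ is fixed on the base — the corresponding square-root lattices and their trace forms; everything else is base-change bookkeeping for the construction of Section~2.2.
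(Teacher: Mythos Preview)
The paper's own proof is simply ``This is an easy verification that we leave to the reader,'' so there is no substantive argument to compare against. Your proposal supplies exactly the verification the authors omit, and it is correct: the reduction to checking that $\varphi\otimes\mathrm{Id}_{M}$ is an isometry before passing to $A$-fixed points is justified, and the key computation---that the base-changed trace form on $B\otimes_{R}\mathcal{D}^{-1/2}(B)$ coincides with $Tr_{B\otimes_{R}B/B}$ (and likewise on the $A$-side), so that the $B$-algebra isomorphism $\varphi$ automatically carries one to the other---is precisely the content the authors presumably had in mind.
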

 \begin{proof}This is an easy verification that we leave to the reader.
 \end{proof}

 We  can now complete the proof of the theorem. It follows from Lemmas 4.3 and 4.4 that
 $$\tilde \varphi^{-1}\circ \nu_{B}: (B\otimes_{R}M, q_{B})\rightarrow (B\otimes_{R }\tilde M_{B}, \tilde q_{B, B})$$
 is an isometry.
 Let $C=B\otimes_{R}B$ and let $q_{i}: B\rightarrow C, 1\leq i\leq 2$
 be the  morphisms as considered prior to Lemma 4.2 at the beginning of this section. For $1\leq i \leq 2$
  the map $\tilde \varphi^{-1}\circ \nu_{B}$ induces, by scalar extension,  an isometry:
 $$\tilde \varphi^{-1} _{i}\circ \nu_{C}:
 (C\otimes_{R}M,  q_{C})\rightarrow  (C\otimes_{R} \tilde M_{B}, \tilde q_{B, C}) \ .$$
 This implies that $\sigma=(\tilde \varphi^{-1} _{1}\circ \nu_{C})^{-1}\circ (\tilde \varphi^{-1} _{2}\circ \nu_{C})
 =\nu_{C}^{-1}\circ (\tilde \varphi_{1}\circ \tilde \varphi^{-1}_{2})\tilde \nu_{C}$ is a $1$-cocycle representative of
 $(M_{\rho, X}, q_{\rho, X})$. Our goal is now to describe this map.

 Let $c$ (resp. $m$) be an element of $C$ (resp. $M$).
  Since $\tilde \varphi_{1}\circ \tilde  \varphi^{-1}_{2}$ commutes with action of $A^{D}$ it follows from the definitions that

 $$(\tilde \varphi_{1}\circ \tilde  \varphi^{-1}_{2})\circ \nu_{C}(c\otimes m)=
 \theta^{D}((\varphi_{1}\circ   \varphi^{-1}_{2})(\lambda^{-1/2}\theta)\otimes( c \otimes m))\ . $$
 It is easily checked that  that $\Delta (\theta)=\lambda\Delta ( e) $ is invariant under the twist map. We then deduce from
 Remark, Section 4.2  that $(\varphi_{1}\circ   \varphi^{-1}_{2})(\lambda^{-1/2}\theta)=\lambda^{-1/2 }(g^{-1}\theta)$, where
 $g$ is the representative of $c(X)$ we constructed previously in Subsection 4.1. It now follows from Lemma 4.3 that
 $$ \theta^{D}((\varphi_{1}\circ   \varphi^{-1}_{2})(\lambda^{-1/2}\theta)\otimes( c \otimes m))=
 \theta^{D}(\lambda^{-1/2}(g^{-1}\theta)\otimes (c\otimes m))
 =\theta^{D}(\lambda^{-1/2}\theta\otimes g(c\otimes m))\ .$$
 This implies that $\sigma (c\otimes m)=
 \nu^{-1}_{C}(\theta^{D}(\lambda^{-1/2}\theta\otimes g(c\otimes m)))=g(c\otimes m)$.
 This tells us  that   $\rho (g)$ is a representative of $(M_{\rho, X}, q_{\rho, X})$ and so achieves the proof of the theorem.

 \section {Examples}
 \subsection  {  The unit form}

\noindent  Let $A$ be a commutative,  finite and flat  Hopf algebra over a principal ideal domain $R$.
Let  $\theta  $ denote a generator of the module of  integrals of $A$.
 We define the form    $\kappa$   on $A^{D}\times A^{D}$  by the equality:
$$\kappa (u, v)=<S^{D}(u)v, \theta $$
for all $ u, v \in A^{D}$.
\begin {prop}  The following properties hold.
\begin{itemize}
\item[i)] The pair $(A^{D}, \kappa)$ is an $A$-equivariant symmetric bundle.

\item [ii)]  If  $A$ satisfies ${\bf H_{2}}$, then, for any principal homogeneous space $B$ of $A$,
the twist of $(A^{D}, \kappa)$ by $B$ coincides with the symmetric bundle $(\mathcal{D}^{-1/2}(B), Tr)$.

\end{itemize}

\end {prop}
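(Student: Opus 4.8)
The plan is to prove the two assertions in turn, reducing everything to properties of the integral $\theta$ and its dual $\theta^{D}$ established in Section 3. For part i), I would first check that $\kappa$ is $A$-equivariant. Since $\kappa(u,v)=\langle S^{D}(u)v,\theta\rangle$ and $A^{D}$ acts on itself by left multiplication, for $g\in A^{D}$ we have $\kappa(gu,v)=\langle S^{D}(gu)v,\theta\rangle=\langle S^{D}(u)S^{D}(g)v,\theta\rangle=\kappa(u,S^{D}(g)v)$, using that $S^{D}$ is an anti-automorphism; this is exactly the equivariance identity $q(gm,n)=q(m,S^{D}(g)n)$. Symmetry of $\kappa$ follows from the fact that $A$ is commutative, hence unimodular (Lemma 2.3), so $S(\theta)=\theta$: one computes $\kappa(u,v)=\langle S^{D}(u)v,\theta\rangle=\langle v, S(\bullet)\rangle$-type manipulations, transporting $S^{D}(u)$ across the pairing via $S$ and using $S(\theta)=\theta$ to land on $\kappa(v,u)$. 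Non-degeneracy is where the integral does its work: by the Larson--Sweedler theorem $A\simeq A^{D}\otimes_{R}I(A)$, so $A$ is free of rank one over $A^{D}$ on $\theta$, which means $x\mapsto \langle \bullet\, x,\theta\rangle$ identifies $A^{D}$ with its own $R$-dual; combined with bijectivity of $S^{D}$ this gives that $\varphi_{\kappa}\colon A^{D}\to\mathrm{Hom}_{R}(A^{D},R)$ is an isomorphism. So $(A^{D},\kappa)$ is a symmetric bundle, and it is a \emph{projective} $A$-equivariant bundle since $A^{D}$ is free over itself.

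For part ii), the strategy is to compute the twist directly from Definition 3 and identify it with $(\mathcal{D}^{-1/2}(B),Tr)=(\lambda^{-1/2}B,Tr)$, where $\lambda^{1/2}$ generates $\Lambda^{1/2}$ and $\Lambda=\varepsilon(I(A))$ under ${\bf H_{2}}$. By Definition 3 the twist is $(\mathcal{D}^{-1/2}(B)\otimes_{R}A^{D},\,Tr\otimes\kappa)^{A}$, i.e. the fixed points of $\lambda^{-1/2}B\otimes_{R}A^{D}$ under the diagonal $A^{D}$-action, equipped with the form induced by $\theta^{D}$ as in Section 2.2. By Lemma 2.5 these fixed points are $\theta^{D}(\lambda^{-1/2}B\otimes_{R}A^{D})$. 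The key computation is to write down an explicit $R$-linear map between $\lambda^{-1/2}B$ and this fixed-point module and check it is an isometry; the natural candidate, mirroring the map $\nu$ of Lemma 4.3, is $b\mapsto \theta^{D}(\lambda^{-1/2}b\otimes 1_{A^{D}})$ (or its inverse, built from the counit-type contraction $\tilde\varepsilon$). One checks surjectivity using that $\theta$ is an $A^{D}$-basis of $A$ exactly as in the proof of Lemma 4.3, and injectivity via the contraction $x\mapsto \tilde\varepsilon(x)$ rescaled by $\lambda^{1/2}$.

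The heart of the matter, and the step I expect to be the main obstacle, is the isometry verification: one must show $(Tr\otimes\kappa)\bigl(\theta^{D}(\lambda^{-1/2}b\otimes 1),\theta^{D}(\lambda^{-1/2}b'\otimes 1)\bigr)=Tr(bb')$. Expanding via $\theta^{D}(b\otimes 1)=\sum \theta^{D}_{(0)}b\otimes \theta^{D}_{(1)}$ and the definition of $\kappa$, this reduces to evaluating a sum of the shape $\sum Tr(\theta^{D}_{(0)}b\, b')\,\langle S^{D}(\theta^{D}_{(1)}),\theta\rangle$ or similar, and then using Corollary 3.3 (which gives $Tr(x)=tx$ and $Tr(\theta)=\lambda$), cocommutativity of $A^{D}$, and the identities $\theta^{D}\theta=1_{A}$, $\theta\theta^{D}=1_{A^{D}}$ to collapse the coproduct terms — exactly the kind of bookkeeping carried out in the $\beta$-computation at the end of Lemma 4.3. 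Once this identity is in hand, part ii) follows; alternatively, one could shortcut by invoking Theorem 4.1 together with the observation that $(A^{D},\kappa)$ is, up to the choice of $\theta^{D}$, the "regular representation'' whose twist by $B$ is forced to be $(\mathcal{D}^{-1/2}(B),Tr)$ by construction, but I would still want to present the direct computation since it is the content of the example. The remark in Section 2 about independence of the choice of $\theta$ up to squares of units guarantees the isometry class is well defined.
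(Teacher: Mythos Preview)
Your approach to part i) is essentially the paper's: symmetry via $S(\theta)=\theta$, equivariance via $S^{D}$ being an anti-automorphism, and non-degeneracy from the fact that $A$ is free of rank one over $A^{D}$ on $\theta$. The paper phrases the last point concretely (choosing an $R$-basis $\{e_{i}\}$ of $A$, writing $S(e_{j})=f_{j}\theta$, and checking $\kappa(e_{i}^{*},f_{j})=\delta_{ij}$), but the content is the same.

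For part ii) your route differs from the paper's. You propose to build directly an isometry $\lambda^{-1/2}B\to(\lambda^{-1/2}B\otimes_{R}A^{D})^{A}$ via $b\mapsto\theta^{D}(\lambda^{-1/2}b\otimes 1_{A^{D}})$ and then verify the form identity by a coproduct expansion parallel to the $\beta$-computation in Lemma~4.3. This can be pushed through, but it is a fresh computation with $A^{D}$ in the second tensor slot rather than an application of Lemma~4.3 as stated. The paper instead inserts an intermediate step that makes the whole thing a one-liner: it first proves that the map $u\mapsto\lambda^{-1/2}u\theta$ is an isometry of $A$-equivariant bundles $(A^{D},\kappa)\simeq(\mathcal{D}^{-1/2}(A),Tr)$, by a short explicit calculation using $\theta=\lambda e$ and $t=\omega_{\Gamma}$. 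Once this is known, the twist of $(A^{D},\kappa)$ by $B$ is the twist of $(\mathcal{D}^{-1/2}(A),Tr)$ by $B$, namely $(\mathcal{D}^{-1/2}(B)\otimes_{R}\mathcal{D}^{-1/2}(A),\,Tr\otimes Tr)^{A}$, and Lemma~4.3~ii) applied verbatim with $(M,q)=(\mathcal{D}^{-1/2}(B),Tr)$ identifies this with $(\mathcal{D}^{-1/2}(B),Tr)$. So the paper's argument factors through a clean auxiliary isometry $(A^{D},\kappa)\simeq(\mathcal{D}^{-1/2}(A),Tr)$---useful in its own right, and what justifies calling $(A^{D},\kappa)$ the ``unit form''---and then invokes the already-proved Lemma~4.3 rather than reproving a variant of it. Your direct approach would work but trades this structural insight for a longer computation.
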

\begin{proof}f Since we know from Lemma 2.3 that  $S(\theta )=\theta $, we note that for all $u, v \in A^{D}$
$$\kappa (u, v)= <S^{D}(u)v, \theta >=<S^{D}(u)v, S(\theta )>=<S^{D}(v)u, \theta>=\kappa(v, u) $$
 so that the form is indeed symmetric. We now observe that
 $$\kappa(tu, v)=<S^{D}(tu)v, \theta >= <S^{D}(u)S^{D}(t)v, \theta >= \kappa (u, S^{D}(t)v)\ .$$
 We therefore conclude that the form is $A$-equivariant. In order  to show that the form is non-degenerate,
 we consider  an $R$-basis $\{e_{1}, ..., e_{n}\}$  of $A$ and we let  $\{e_{1}^{*},..., e_{n}^{*}\}$ denote  its dual $R$-basis in $A^{D}$. Since
 $\theta $ is a basis of $A$ as an  $A^{D}$-module,  for any $i, 1\leq i\leq n$, there exists $f_{i} \in A^{D}$ such
 that $S(e_{i})=f_{i}\theta $. It is easily verified  that $\{f_{1}, ..., f_{n}\}$ is an $R$-basis of $A^{D}$. For any $i$ and $j$ we have
 $$\delta _{ij}=<e_{i}^{*}, e_{j}>=<S^{D}(e_{i}^{*}), S(e_{j})>=<S^{D}(e_{i}^{*}), f_{j}\theta > $$
 and therefore
 $$\delta _{ij}=\varepsilon (S^{D}(e_{i}^{*})(f_{j}\theta ))=
 \varepsilon ((S^{D}(e_{i}^{*})f_{j})\theta )=<S^{D}(e_{i}^{*})f_{j}, \theta >=\kappa(e_{i}^{*}, f_{j})\ .$$

 Since $\{f_{1}, ..., f_{n}\}$  is an  $R$-basis of $A^{D}$, we
 can  write $e^{*}_{j}= \sum _{1\leq k\leq n}r_{kj}f_{k}$ where $(r_{ij})$ is a $n\times n$ invertible matrix with coefficients in $R$.
The previous equalities show that   $\kappa (e_{i}^{*}, e_{j}^{*})=r_{ij}$ for any $i$ and $j$. Therefore the form
$\kappa$ is non-degenerate.

\vskip 0.1 truecm

In order to prove(ii) we shall now assume that $A$ satisfies ${\bf H_{2}}$.  In this case we can  provide   a new  description of $(A^{D}, \kappa)$.
From now on we use  the notation of Section 3 and Corollary 3.3.
We let  $\theta$ (resp. $\theta^{D}$)  be the  generator $\lambda e$ (resp. $\lambda^{-1}t $)  of
    $I(A)$ (resp. $I^{D}(A)$) ,  where
$\lambda$ is an $R$-basis  of $\varepsilon (I(A))$. We consider  the map $\varphi: u\mapsto \lambda^{-1/2}u\theta$ from $A^{D}$
into $\mathcal{D}^{-1/2}(A)$.  We wish to show
that this  isomorphism of $A^{D}$-modules induces  an isometry from
$(A^{D}, \kappa)$ into $(\mathcal{D}^{-1/2}(A), Tr)$. Hence we need to show that for all $u, v \in A^{D}$
$$\kappa(u, v) =Tr((\lambda^{-1/2}u\theta) (\lambda^{-1/2}v\theta)).$$
It follows from the definitions that
 $$<S^{D}(u)v, \theta>=\lambda<S^{D}(u)v, e>$$
  while
$Tr((\lambda^{-1/2}u\theta) (\lambda^{-1/2}v\theta))=\lambda t((ue)(ve)$.
Writing    $u=\sum_{\gamma \in \Gamma}u_{\gamma}\gamma$ and $v=\sum_{\delta \in \Gamma}v_{\delta}\delta $,  we easily check that

$$<S^{D}(u)v, e>=t((ue)(ve)=\sum_{\gamma\in \Gamma}u_{\gamma}v_{\gamma}, $$
which is the required equality.
\end{proof}

  Let $B$ be a $PHS$ for $A$. We wish to describe the twist of $(A^{D}, \kappa)$ by $B$. This can be easily done:
 from the very definition of the twist and from our previous observations we obtain that
 $$(\tilde A^{D}_{B}, \tilde \kappa_{B})\simeq (\mathcal{D}^{-1/2}(B)\otimes A^{D},Tr\otimes \kappa)^{A}\simeq
 (\mathcal{D}^{-1/2}(B)\otimes_{R} \mathcal{D}^{-1/2}(A),Tr\otimes_{R} Tr)^{A}\ .$$
 We now deduce from Lemma 3.3 ii) that
 $$(\mathcal{D}^{-1/2}(B)\otimes_{R} \mathcal{D}^{-1/2}(A),Tr\otimes_{R} Tr)^{A}\simeq
 (\mathcal{D}^{-1/2}(B), Tr)\ . $$
 This proves  that $ (\mathcal{D}^{-1/2}(B), Tr)$ is the twist of $(A^{D}, \kappa)$ by $B$.
 \vskip 0.1 truecm
 \medskip
\newpage
 \noindent {\bf Remarks}

 \noindent {\bf 1.} The symmetric bundle $((A^{D})^{A}, \kappa^{A})$ is easy to describe.
   It follows from the very definition  that $({A^{D}})^{A}=I(A^{D})$. Choose a basis  $\theta^{D}$  of $I(A^{D})$
   such that $\theta^{D}\theta=1_{A}$.
 The form $\kappa^{A}$  is defined on $I(A^{D})$ by the formula
 $$\kappa^{A}(\theta^{D} x, \theta^{D}  y)=\kappa( x, \theta^{D}y)=xy<\theta^{D}, \theta > \ \forall x, y \in R\ . $$
 Since
 $$<\theta^{D}, \theta > =\varepsilon(\theta^{D} \theta)=1,  $$
 we conclude   that $\kappa^{A}(\theta x, \theta  y)=xy$.

 \noindent {\bf 2.} If  $A$ satisfies ${\bf H_{2}}$, then the integral $\theta$ used in the proof of the proposition has been  chosen
 according to the stipulations of Section 3. It follows  that  $(A^{D}, \kappa)$ is independent of this choice, up to isometry.
 We refer to $(A^{D}, \kappa)$ as the {\it unit form of $A^{D}$}.

 \noindent {\bf 3.} The Hopf algebra
 $A=\mbox{Map}(\Gamma, R)$, with  $\Gamma$  a finite group, obviously satisfies ${\bf H_{2}}$. This is the case where the group scheme
 defined by $A$ is the constant group scheme. In this situation we can choose  for $\theta$ the integral of $A$ defined by  $\theta (g)=1$
 if $g$ is the identity and $0$ otherwise. We observe that $A^{D}$ is the group algebra $R[\Gamma]$ and that the form $\kappa$
  is given on elements of $\Gamma$ by

 $$\kappa (\gamma, \gamma')=<\gamma^{-1}\gamma', \theta > = \delta _{\gamma, \gamma'}\ . $$
 It follows from these equalities  that  $\{\gamma\in \Gamma\}$ is an orthonormal basis for
 $\kappa$ and therefore   that the  symmetric bundle  $(A^{D}, \kappa )$ is the usual {\it unit form of $\Gamma $}.

 \noindent {\bf 4.} When  $G$ is generically constant,  of odd order,  ($A_{K}=\mbox{Map}(\Gamma, K)$,  with $\Gamma$ of odd order),
 we  know from [BL], that after scalar extension to $K$,  the forms become $\Gamma$-isometric. Therefore  we have the following isometries of equivariant symmetric bundles:
 $$(\mathcal{D}^{-1/2}(B), Tr)\otimes_{R}K\simeq (K[\Gamma], \kappa_{K})\simeq
 (A^{D}, \kappa)_{K}\ . $$ This result leads us naturally to compare
 $((\mathcal{D}^{-1/2}(B), Tr))$ and $(A^{D}, \kappa)$ both as $R$-symmetric bundles and also as  $A$-equivariant symmetric bundles
 in the general situation.

 \subsection  { An orthogonal representation of $\mu _{n}$}

  Consider the $R$-algebra  $A=(R[T]/(T^{n}-1))= R[t]$ with the following additional structure: a comultiplication
   $\Delta: A\rightarrow A\otimes_{R}A $ induced by $t\mapsto t\otimes t, $ a counit $\varepsilon $ induced
  by $t\mapsto 1$ and an antipode $S: A\rightarrow A$ induced by $t\mapsto t^{-1}=t^{n-1}$.
This is then a Hopf $R$-algebra which represents  the $S=\mbox{Spec}(R)$-group scheme
  $\mu_{n}$ of  $n$-th roots of unity.   Its dual $A^{D}=\mbox{Hom}_{R}(R[t], R)$
  represents the constant group scheme ${\bf Z}/n{\bf Z}$ over $S$.

We consider   the symmetric bundle $(V, q)$ consisting of the  $R$-free   module $V$,  of  rank $2$,  with basis
$\{\varepsilon _{1}, \varepsilon _{2}\}$ and the  symmetric bilinear form $q$ defined by
$$q(\varepsilon _{k}, \varepsilon _{k})=0\ \mbox{for}\  1\leq k\leq 2\ \ \mbox{and}\  q(\varepsilon_{1}, \varepsilon _{2})=1/2\ .$$
We note  that when $R$ contains a square root of $-1$,   denoted by
$i$, then   $e_{1}=\varepsilon_{1}+ \varepsilon _{2}$ and $e_{2}= i(\varepsilon _{1}-\varepsilon _{2})$ form  a new
 basis of   $V$ for which $q(e_{k},e_{k})=1,\ \mbox{for}\ 1\leq k\leq 2$ and $q(e_{1}, e_{2})=0$.  Therefore  $(V, q)$,
is  isometric to $(R^{2}, x^{2}+y^{2})$.

It is easy to check that the $R$-linear map defined by
$$\alpha(\varepsilon _{1})=\varepsilon _{1}\otimes t, \ \ \alpha (\varepsilon _{2})=\varepsilon _{2}\otimes t^{n-1}$$
induces an $A$-comodule structure on $V$,  and therefore an $A^{D}$-module structure described by

$$ u\varepsilon _{1}=u(t)\varepsilon _{1}, \mbox \ \ \   u\varepsilon _{2}=u(t^{n-1})\varepsilon _{2},
 $$
 for all $u \in A^{D}$.
It is  then verified that indeed
$$q(u\varepsilon _{k}, \varepsilon _{k})=q(\varepsilon _{k}, S^{D}(u)\varepsilon _{k})=0,\ \mbox{for}\ 1\leq k \leq 2$$
and $$q(u\varepsilon _{1}, \varepsilon _{2})=q(u(t)\varepsilon _{1}, \varepsilon _{2})=
q(\varepsilon _{1}, u(t)\varepsilon _{2})=q(\varepsilon _{1}, S^{D}(u)(t^{n-1})\varepsilon _{2})=
q(\varepsilon _{1}, S^{D}(u)\varepsilon _{2})\ \ , $$
for all $u \in A^{D}$. We deduce from the previous relations that $(V, q)$ is an $A$-equivariant symmetric bundle.
We may associate to this form a morphism of group schemes,
 $\rho: \mu_{n}\rightarrow {\bf O}(q) $,  defined for any $R$-algebra $C$ by the
group homomorphism $\rho_{C}: \mu _{n}(C)\rightarrow O(q_{C})$ where
$\rho_{C}(\xi )$ is given  on the basis $\{\varepsilon _{1}, \varepsilon _{2}\}$ by
$$\rho_{C}(\xi )(\varepsilon _{1})=
\xi \varepsilon _{1}, \ \ \  \rho_{C}(\xi )(\varepsilon _{2})=\xi^{-1}\varepsilon _{2}\ . $$
When $i \in R$, this is a representation of $\mu_{n}$ into ${\bf O_{2}}$.

Our goal is now to study the twists of this  form by torsors.  We now assume that   $R$ is  a discrete valuation ring of characteristic $0$
and of residual characteristic $p$ different from $2$.  We take  $A=((R[T]/(T^{p}-1))= R[t])$.
 For any unit $y\in R^{\times}$,   we let $B_{y}$ be the $R$-algebra $R[X]/(X^{p}-y)=R[x]$.
The $R$-linear map $\alpha: B_{y}\rightarrow B_{y}\otimes A$ defined by $\alpha(x^{k})=x^{k}\otimes t^{k},   0\leq k\leq p-1, $
endows
$B_{y}$ with a structure of a $A$-comodule algebra and so with a structure of  $A^{D}$-module given by

$$ub=\sum_{0\leq k\leq p-1}u(t^{k})b_{k}x^k, \ u \in A^{D}, b=\sum b_{k}x^{k}\ . $$
 Let  $z$ be a $p$-th root of $y$ in an algebraic closure of the fraction field of $R$ and let $C$ be the algebra $R[z]$.   The map
$T\rightarrow zX$ induces an isomorphism of $C$-$A$ comodule algebras  from $C\otimes A$ onto $C\otimes B_{y}$.
This proves that $B_{y}$ is a PHS for $A$. By hand checking we verify that

$$ I(A)=R(1+ ...+ x^{p-1})=pR((1+ ...+ x^{p-1})/p), $$
where $(1+ ...+ x^{p-1})/p$ is an idempotent of $A_{K}$. We then get that
$$\Lambda=\varepsilon (I(A))=pR\ . $$
We now assume that {\it $R$ contains a square root of $p$}, denoted by $p^{1/2}$. Since $A$ now satisfies ${\bf H_{2}}$ for any
unit $y\in R^{\times}$ we may consider  the projective symmetric bundle $(\mathcal{D}^{-1/2}(B_{y}), Tr)=(p^{-1/2}B_{y}, Tr)$.
We note  that $Tr(x^{k}, x^{l})=1$ (resp. $py$) for $k=l=0$
(resp. $k+l=p$)  and $0$ otherwise. It follows from Definition 3 of Section 3 that the twist of $(V, q)$ by $B_{y}$ is the
$R$-symmetric bundle

$$(V_{y}, q_{y})= ((p^{-1/2}R[x]\otimes V))^{A}, (Tr\otimes q)^{A})\  . $$

\begin {prop} For any unit $y\in R$ there exists an isometry of $R$-symmetric bundles:
$$(V, q)\simeq (V_{y}, q_{y}) \ .$$
\end{prop}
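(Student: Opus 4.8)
The plan is to compute the twisted bundle $(V_y,q_y)$ of Definition 3 completely explicitly in the evident $R$-basis $\{\,p^{-1/2}x^k\otimes\varepsilon_j : 0\le k\le p-1,\ j=1,2\,\}$ of $p^{-1/2}B_y\otimes_R V$, and then to recognise it, by an obvious rescaling, as $(V,q)$. Recall from the discussion preceding the proposition that $\Lambda=\varepsilon(I(A))=pR$, that $\theta=1+t+\cdots+t^{p-1}$ is an $R$-basis of $I(A)$, and that $\lambda^{1/2}=p^{1/2}$, so that $\mathcal{D}^{-1/2}(B_y)=p^{-1/2}B_y$. Let $\{e_i\}_{i\in\mathbf{Z}/p\mathbf{Z}}$ be the $R$-basis of $A^{D}$ dual to $\{t^i\}$; a direct check shows that the $e_i$ are orthogonal idempotents summing to $1_{A^{D}}$, with $\varepsilon^{D}(e_i)=\delta_{i,0}$, $S^{D}(e_i)=e_{-i}$, $\Delta^{D}(e_i)=\sum_{k}e_k\otimes e_{i-k}$, and that $\theta^{D}:=e_0$ is the unique $R$-basis of $I(A^{D})$ with $\theta^{D}\theta=1_A$ --- this is the element entering the fixed-point construction of Section 2.2.

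The first step is to compute $M^A$ for $M=p^{-1/2}B_y\otimes_R V$. Since $B_y$ is a principal homogeneous space it is a projective $A^{D}$-module (descent along $R\to B_y$), hence so is $M$ by Proposition \ref{again}, and Lemma \ref{fixedtrace} gives $M^A=\theta^{D}M$. The $A^{D}$-action on $M$ is diagonal and the comodule structures give $e_i\cdot x^k=\delta_{i,k}x^k$, $e_j\cdot\varepsilon_1=\delta_{j,1}\varepsilon_1$, $e_j\cdot\varepsilon_2=\delta_{j,p-1}\varepsilon_2$; combined with $\Delta^{D}(\theta^{D})=\sum_i e_i\otimes e_{-i}$ this yields
$$\theta^{D}(x^k\otimes\varepsilon_1)=\delta_{k,p-1}\,x^{p-1}\otimes\varepsilon_1,\qquad \theta^{D}(x^k\otimes\varepsilon_2)=\delta_{k,1}\,x\otimes\varepsilon_2 .$$
Therefore $M^A$ is $R$-free with basis $f_1=p^{-1/2}x^{p-1}\otimes\varepsilon_1$ and $f_2=p^{-1/2}x\otimes\varepsilon_2$.

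The second step is to evaluate $q_y=(Tr\otimes q)^A$ on $f_1,f_2$. By the definition of the form on a fixed-point bundle one has $q_y(f_a,f_b)=(Tr\otimes q)(m_a,f_b)$ for any $m_a$ with $\theta^{D}m_a=f_a$, and here one may take $m_a=f_a$ since $\theta^{D}f_a=f_a$. As $q(\varepsilon_j,\varepsilon_j)=0$ we get $q_y(f_1,f_1)=q_y(f_2,f_2)=0$, while
$$q_y(f_1,f_2)=p^{-1}\,Tr_{B_y/R}(x^{p-1}\cdot x)\,q(\varepsilon_1,\varepsilon_2)=p^{-1}\,Tr_{B_y/R}(y)\cdot\tfrac12=p^{-1}\cdot(py)\cdot\tfrac12=\tfrac{y}{2},$$
using $x^p=y$ in $B_y$ and $Tr_{B_y/R}(1)=p$. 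In particular $q_y$ is non-degenerate, as it must be by Proposition \ref{CohAl}.

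Finally, since $y\in R^{\times}$ the $R$-linear map $V\to V_y$ with $\varepsilon_1\mapsto y^{-1}f_1$ and $\varepsilon_2\mapsto f_2$ is an isomorphism of free $R$-modules, and it carries $q$ onto $q_y$: the diagonal entries vanish on both sides and $q_y(y^{-1}f_1,f_2)=y^{-1}\cdot y/2=1/2=q(\varepsilon_1,\varepsilon_2)$. This gives the asserted isometry $(V,q)\simeq(V_y,q_y)$. I do not expect any serious obstacle: the work is the bookkeeping of the comodule/module dictionary and of $\Delta^{D}$, and the one genuinely error-prone point is that $q^A$ is not the restriction of $Tr\otimes q$ to $M^A$ but must be read off through a $\theta^{D}$-preimage (Remark 1 after Proposition \ref{CohAl}) --- which here causes no trouble precisely because $f_1$ and $f_2$ are $\theta^{D}$-invariant.
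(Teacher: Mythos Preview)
Your proof is correct and follows essentially the same route as the paper: both compute the fixed submodule $V_y=(p^{-1/2}B_y\otimes_R V)^A$ by applying the integral $\theta^{D}=e_0=u_0$ to the natural basis, obtain the free $R$-basis $\{p^{-1/2}x^{p-1}\otimes\varepsilon_1,\ p^{-1/2}x\otimes\varepsilon_2\}$, evaluate $q_y$ via the definition of $q^A$ to find the Gram values $0,0,y/2$, and finish by rescaling one basis vector by $y^{-1}$. The only cosmetic difference is which of the two basis vectors absorbs the factor $y^{-1}$.
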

\begin{proof}  The set
 $$\{p^{-1/2}x^{k}\otimes \varepsilon _{1}, p^{-1/2}x^{k}\otimes \varepsilon _{2},  1\leq k \leq p-1 \}$$ is an $R$-basis of
 $(p^{-1/2}R[x]\otimes V))$.
 For any $k, 0\leq k \leq p-1$, denote  by $u_{k}$
the element of $A^{D}$ defined by  $u_{k}(t^{l})=\delta _{kl}$. It is immediate that $\{u_{0}, ..., u_{p-1}\}$ is an $R$-basis of
$A^{D}$ while  $u_{0}$ is  a basis for the module of integral of $A^{D}$. It follows from Lemma 2.5   that
$$\{u_{0}(p^{-1/2}x^{k}\otimes \varepsilon _{1}), u_{0}(p^{-1/2}x^{k}\otimes \varepsilon _{2}),  1\leq k \leq p-1 \}$$ is
a set of generators of  $V_{y}$. Since
$\Delta ^{D}(u_{0})= u_{0}\otimes u_{0}+ u_{1}\otimes u_{p-1}+ ...+ u_{p-1}\otimes u_{1}$ and  since $A^{D}$ acts
diagonally we obtain that
$$u_{0}(p^{-1/2}x^{k}\otimes \varepsilon _{j})= (u_{0}p^{-1/2}x^{k}\otimes u_{0}\varepsilon _{j}) +
\sum_{1 \leq l \leq p-1}(u_{l}p^{-1/2}x^{k}\otimes u_{p-l}\varepsilon _{j}), 1 \leq j \leq 2\ .$$
For $j=1$ we deduce that $u_{0}(p^{-1/2}x^{k}\otimes \varepsilon _{1})=p^{-1/2}x^{p-1}\otimes \varepsilon _{1}$
 if $k= p-1$ and $0$ otherwise. For $j=2$ then
 $u_{0}(p^{-1/2}x^{k}\otimes \varepsilon _{2})=p^{-1/2}x\otimes \varepsilon _{2}$ if $k=1$ and $0$ otherwise.
  This implies that
 $\{\varepsilon '_{1,y}=p^{-1/2}x^{p-1}\otimes \varepsilon _{1}, \varepsilon '_{2,y}=p^{-1/2}x\otimes \varepsilon _{2}\}$
 is a $R$-basis of $V_{y}$
such that
 $$u_{0}\varepsilon '_{1,y}=\varepsilon '_{1,y}, \ u_{0}\varepsilon '_{2,y}=\varepsilon '_{2,y}\ .$$
It follows from the definition of $q$ and $q_{y}$  that
 $$q_{y}(u_{0}\varepsilon '_{j,y}, u_{0}\varepsilon '_{j,y})=(Tr\otimes q)(\varepsilon '_{j,y}, u_{0}\varepsilon '_{j,y})
 =(Tr\otimes q)(\varepsilon '_{j,y}, \varepsilon '_{j,y})= 0, 1\leq j\leq 2\  $$
 and
 $$q_{y}(u_{0}\varepsilon '_{1,y}, u_{0}\varepsilon '_{2,y})=(Tr\otimes q)(\varepsilon '_{1,y}, u_{0}\varepsilon '_{2,y})
 =(Tr\otimes q)(\varepsilon '_{1,y}, \varepsilon '_{2,y})=y/2\ .$$
 We conclude  that the $R$-linear map given  by $(\varepsilon_{1}\mapsto  \varepsilon'_{1,y}) $ and
 $(\varepsilon_{2}\mapsto  y^{-1}\varepsilon'_{2,y}) $ induces  an isometry from $(V, q)$ onto $(V_{y}, q_{y})$
 for any $y\in R^{*}$.
 \end{proof}

 \subsection {A dihedral representation}

 We construct an example of an orthogonal representation of a non-commutative group scheme which induces,  by restriction to the
 generic fiber,  a dihedral representation as defined in [F].

 \subsubsection {The Hopf algebra}

Let $\ D$ denote a dihedral group of order $2n$ with $n>2$ and with
generators and relations:%
$$D=\left\langle \sigma ,\tau \mid \ \sigma ^{n}=1=\tau ^{2},\ \tau \sigma
\tau =\sigma ^{-1}\right\rangle .$$
$R$ denotes an integral domain which is regular local ring and which is a
finitely generated algebra over $\mathbf{Z}_{p}$ for an odd prime $p.$ \ We
let $K$ denote the field of fractions of $R$ and we suppose that $\mu
_{n}\subset R.\ $We let $H_{K}$ denote the group algebra$\ K\left[ D\right]
, $ endowed with its structure of  non-commutative Hopf algebra.  For any character $\phi $ of $\left\langle \sigma \right\rangle
$ we let
$e_{\phi}$ be the idempotent $n^{-1}\sum_{\varsigma \in <\sigma>}\phi(\varsigma)\varsigma^{-1}$ of $K\left\langle \sigma \right\rangle
$.
We consider    the split maximal $\ R$-order $\mathcal{M}$ in the
group algebra $K\left\langle \sigma \right\rangle $%
$$
\mathcal{M}=\oplus _{\phi }R.e_{\phi }\supset R\left\langle \sigma \right\rangle
$$
where $\phi $ ranges over the abelian\ $K$-valued characters of the cyclic
group $\left\langle \sigma \right\rangle .$ Recall that $\mathcal{M}$ is an $R$-Hopf
order in the group algebra $K\left\langle \sigma \right\rangle $ with
$$
\Delta (e_{\phi }) =\sum_{\alpha, \beta\mid \alpha.\beta=\phi} e_{\alpha}\otimes e_{\beta}.
$$
We then let $H$ denote the$\ R$-order in $K\left[ D\right] \ $given by the
twisted group ring $\mathcal{M}\circ \left\langle \tau \right\rangle ;$ so that we may
write%

$$\mathcal{M}\circ\left\langle \tau \right\rangle
 =
R\left\langle \tau \right\rangle \oplus _{\phi }^{\prime }R\circ _{\phi
}\left\langle \tau \right\rangle  \ \mbox{if }n\mbox{ is odd}$$
and
$$\mathcal{M}\circ \left\langle \tau \right\rangle=R\left\langle \tau \right\rangle
\oplus R.e_{\theta }\left\langle \tau \right\rangle \oplus _{\phi }^{\prime
}R\circ _{\phi
}\left\langle \tau \right\rangle   \ \mbox{if }n\mbox{ is even, }$$
where$\ \theta $ is the unique quadratic character of $\left\langle \sigma
\right\rangle $ if $n\ $is even,  where $\oplus _{\phi }^{\prime }$
denotes the  sum over the orbits of abelian
characters $\phi \ $of order greater than 2, modulo the action of the
involution $\sigma \longmapsto \sigma ^{-1} $ and where we have set
$$R\circ _{\phi
}\left\langle \tau \right\rangle=(Re_{\phi}+Re_{\overline \phi})\circ\left\langle \tau \right\rangle\ \mbox{with}\ \tau e_{\phi}=e_{\overline \phi}\tau\ .$$
For $\phi $ with order greater than 2 recall from [R], Chapter 9, that we know that\ $%
R\circ _{\phi }\left\langle \tau \right\rangle $ is a hereditary $R$-order,
and is maximal if $n$ is invertible in $R$.

\begin{lem}
$H$ is an $\ R$-Hopf order in $\ H_{K}.$
\end{lem}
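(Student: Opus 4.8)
The plan is to check the two defining properties of an $R$-Hopf order in turn: that $H$ is an $R$-order in $H_{K}=K[D]$, and that the structure maps $\Delta,\varepsilon,S$ of $H_{K}$ carry $H$ into $H\otimes_{R}H$, $R$ and $H$ respectively.

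First I would establish that $H=\mathcal M\circ\langle\tau\rangle$ is a well-defined sub-$R$-algebra of $K[D]$ which is $R$-free of rank $2n$. Conjugation by $\tau$ in $K[D]$ sends $\varsigma\mapsto\varsigma^{-1}$ for $\varsigma\in\langle\sigma\rangle$, whence $\tau e_{\phi}\tau^{-1}=e_{\overline\phi}$; since $\phi\mapsto\overline\phi$ permutes the characters of $\langle\sigma\rangle$ we get $\tau\mathcal M\tau^{-1}=\mathcal M$, so $\mathcal M+\mathcal M\tau$ is closed under multiplication (using $\tau^{2}=1$) and clearly contains $R[D]$. As an $R$-module $H=\mathcal M\oplus\mathcal M\tau$ is free with basis $\{e_{\phi}\}\cup\{e_{\phi}\tau\}$, the decompositions displayed above (according to the parity of $n$) being nothing but a regrouping of this basis into the $\langle\tau\rangle$-stable blocks indexed by the orbits of $\phi\mapsto\overline\phi$. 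Finally $H\otimes_{R}K=K\langle\sigma\rangle\circ\langle\tau\rangle=K[D]=H_{K}$, so $H$ is indeed an $R$-order; being $R$-free it is flat, so $H\otimes_{R}H$ embeds in $H_{K}\otimes_{K}H_{K}$ and asking whether the structure maps restrict makes sense.

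It then remains to run the three structure maps through the basis, using in each case that the corresponding map already restricts to the Hopf order $\mathcal M$ and that $\tau$ is a grouplike unit. For the counit, $\varepsilon(\mathcal M)\subseteq R$ and $\varepsilon(\tau)=1$ give $\varepsilon(e_{\phi}\tau)=\varepsilon(e_{\phi})\in R$, so $\varepsilon(H)\subseteq R$. For the antipode, the anti-automorphism $g\mapsto g^{-1}$ of $K[D]$ satisfies $S(e_{\phi})=e_{\overline\phi}\in\mathcal M$ and $S(\tau)=\tau$, so $S(e_{\phi}\tau)=S(\tau)S(e_{\phi})=\tau e_{\overline\phi}=e_{\phi}\tau\in H$, giving $S(H)\subseteq H$ (and $S^{2}=\mathrm{id}$ on $K[D]$). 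For the comultiplication, $\Delta(e_{\phi})=\sum_{\alpha\beta=\phi}e_{\alpha}\otimes e_{\beta}\in\mathcal M\otimes_{R}\mathcal M$ and $\Delta(\tau)=\tau\otimes\tau$, so $\Delta(e_{\phi}\tau)=\Delta(e_{\phi})\Delta(\tau)=\sum_{\alpha\beta=\phi}e_{\alpha}\tau\otimes e_{\beta}\tau\in H\otimes_{R}H$ (using that $\Delta$ is an algebra map), giving $\Delta(H)\subseteq H\otimes_{R}H$. This proves $H$ is an $R$-Hopf order in $H_{K}$.

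The computations are all elementary; the one step that genuinely uses prior structure is $\Delta(H)\subseteq H\otimes_{R}H$, which rests on the explicit formula for $\Delta(e_{\phi})$ landing in $\mathcal M\otimes_{R}\mathcal M$ --- equivalently, on $\mathcal M$ already being an $R$-Hopf order in $K\langle\sigma\rangle$ --- together with $\tau$ being grouplike. The only other point requiring attention is the freeness of $H$ over $R$, needed so that $H\otimes_{R}H\hookrightarrow H_{K}\otimes_{K}H_{K}$ and the notion of ``restriction of $\Delta$'' is meaningful; this is immediate from the basis $\{e_{\phi},e_{\phi}\tau\}$ and is independent of the parity of $n$.
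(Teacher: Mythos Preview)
Your proof is correct and follows essentially the same approach as the paper: the crux in both is the verification that $\Delta(H)\subseteq H\otimes_{R}H$, reduced to checking the generators $\tau$ and the $e_{\phi}$ via $\Delta(\tau)=\tau\otimes\tau$ and the formula $\Delta(e_{\phi})=\sum_{\alpha\beta=\phi}e_{\alpha}\otimes e_{\beta}$. The paper's version is terser---it treats the order structure, the counit, and the antipode as evident and only records the comultiplication check---whereas you spell out all three structure maps and the freeness of $H$ over $R$; this adds clarity but no new idea.
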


\begin{proof} Basically we need to show that $\ \Delta \left( H\right) \subset
H\otimes H.$ Since $H$ is generated over $\ R$ by $\tau $ and the various $%
e_{\phi },$ it will suffice to show that
$$
\Delta (\tau) \in H\otimes H,\ \mbox{and }\ \Delta (
e_{\phi }) \in H\otimes H.
$$
The first follows from the definition of $\Delta $ and the second  from our previous equalities.
\end{proof}

\bigskip

Henceforth we identify $\ H_{K}^{D}=\mathrm{Map}\left( D,K\right) ;$ so that
Spec$\left( H_{K}^{D}\right) $ is the constant group scheme over $\mathrm{%
Spec}\left( K\right) $ associated to $D.$ We then define$\ A$ \ to be the $\
R$-dual \ $\ $%
$$
A=H^{D}=\mathrm{Hom}_{R}\left( H, R\right) ;
$$
then$\ \mathrm{Spec}\left( A\right) $ is a non-constant (but generically
constant) group scheme over $\mathrm{Spec}\left( R\right) $. We note that, since $H$ is a finitely generated projective $R$-module, it follows that
$A^{D}=(H^{D})^D$ is naturally isomorphic to $H$. We will identify these Hopf algebras.

\begin{lem}
Since $\ 2$ is invertible in $R$, the modules of integrals for the Hopf
orders $\ H$ and $A$ are:
\begin{itemize}
\item[i)]  $I(H) =2R.e_{D}=n^{-1}R.\sum_{d\in D}d ; $

\item[ii)]  $I(A)=Rn.l_{0}$ where for $d\in D$, $l_{0}(d)=1$ if  $d=1_{D}$ and $0$ otherwise.
\end{itemize}
Moreover,  for any principal homogeneous space $B$ for $A$, we have the equality:
$$\mathcal{D}_{B}=nB$$

\end{lem}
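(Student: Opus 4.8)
The plan is to compute everything inside the semisimple group algebra $H_{K}=K[D]$ and its dual $A_{K}=H_{K}^{D}=\mathrm{Map}(D,K)$, exploiting the explicit $R$-lattice structure of $H$. Write $\omega_{D}=\sum_{d\in D}d$, so that $\omega_{D}=2n\,e_{D}$, and let $\phi_{0}$ denote the trivial character of $\langle\sigma\rangle$. Since the twisted group ring $\mathcal{M}\circ\langle\tau\rangle$ is free over $\mathcal{M}$ on $\{1,\tau\}$ and $\mathcal{M}=\oplus_{\phi}R\,e_{\phi}$, the set $\mathcal{B}=\{e_{\phi}\}_{\phi}\cup\{e_{\phi}\tau\}_{\phi}$ is an $R$-basis of $H$ (the displayed decompositions of $\mathcal{M}\circ\langle\tau\rangle$ merely regroup these elements), and $\mathcal{B}^{*}$ is the corresponding dual $R$-basis of $A=H^{D}$.

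For i), recall that the module of left integrals of the group algebra $K[D]$ is $K\omega_{D}$, so that $I(H)=H\cap K\omega_{D}$ is a fractional ideal multiple of $\omega_{D}$. Splitting $D=\langle\sigma\rangle\sqcup\langle\sigma\rangle\tau$ and using $\sum_{\varsigma\in\langle\sigma\rangle}\varsigma=n\,e_{\phi_{0}}$ gives $\omega_{D}=n\,(e_{\phi_{0}}+e_{\phi_{0}}\tau)$. Hence $n^{-1}\omega_{D}=e_{\phi_{0}}+e_{\phi_{0}}\tau\in H$; and for $c\in K$ the element $c\,\omega_{D}=(cn)e_{\phi_{0}}+(cn)e_{\phi_{0}}\tau$ has coordinate $cn$ on each of $e_{\phi_{0}}$, $e_{\phi_{0}}\tau$ and $0$ on the other basis vectors, so it lies in $H$ iff $cn\in R$. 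Therefore $I(H)=n^{-1}R\,\omega_{D}=R\,(n^{-1}\omega_{D})=2R\,e_{D}=n^{-1}R\sum_{d\in D}d$. The point to watch is that, although $H$ is non-commutative, $\omega_{D}$ lives in the trivial-character block $R\,e_{\phi_{0}}\langle\tau\rangle$, so the dihedral twist plays no role.

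For ii), dually, the module of integrals of $A_{K}=\mathrm{Map}(D,K)$ is $K\,l_{0}$, with $l_{0}$ the function of statement ii) (immediate from the counit of $\mathrm{Map}(D,K)$ being evaluation at $1_{D}$ and its product being pointwise). Viewing $l_{0}\in H_{K}^{D}$ as the functional ``coefficient of $1_{D}$'' on $H_{K}=K[D]$, the formula $e_{\phi}=n^{-1}\sum_{\varsigma}\phi(\varsigma)\varsigma^{-1}$ gives $\langle l_{0},e_{\phi}\rangle=n^{-1}$ and $\langle l_{0},e_{\phi}\tau\rangle=0$, so that $l_{0}=n^{-1}\sum_{\phi}e_{\phi}^{*}$ in $\mathcal{B}^{*}$. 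Thus for $c\in K$ one has $c\,l_{0}\in A=H^{D}$ iff $c\,n^{-1}\in R$ iff $c\in nR$, whence $I(A)=A\cap K\,l_{0}=nR\,l_{0}$.

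For the last assertion, since $\varepsilon_{A}(l_{0})=l_{0}(1_{D})=1$, part ii) yields $\Lambda:=\varepsilon(I(A))=nR$. The algebra $A$ is commutative, $A_{K}=\mathrm{Map}(D,K)$ is separable (indeed \'etale), and $I(A)$ is $R$-free by ii), so Corollary 3.3(iii) applies and gives $\mathcal{D}^{-1}(B)=\Lambda^{-1}B=n^{-1}B$ for every principal homogeneous space $B$ of $A$; inverting the principal fractional $B$-ideal $n^{-1}B$ gives $\mathcal{D}_{B}=nB$. None of the steps presents a genuine obstacle: the only care required is the bookkeeping with the $R$-basis $\mathcal{B}$ of $H$ and its dual, and the observation that Corollary 3.3 does apply in this generically constant but non-constant setting, which it does.
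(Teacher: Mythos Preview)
Your proof is correct. The approach for part i) and the codifferent computation coincide with the paper's, both resting on the explicit $R$-basis $\{e_{\phi},\,e_{\phi}\tau\}$ of $H$ and Corollary~3.3(iii). The only real difference is in how you obtain ii): the paper invokes Corollary~3.5 (the relation $\Lambda\Lambda^{D}=(\mathrm{rank})R$) to deduce ii) from i) in one stroke, whereas you compute $I(A)$ directly by writing $l_{0}=n^{-1}\sum_{\phi}e_{\phi}^{*}$ in the dual basis and intersecting $Kl_{0}$ with $A$. Your route is slightly more self-contained and makes the lattice structure of $A$ explicit; the paper's route is shorter but relies on the earlier duality result. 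Both are entirely elementary.
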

\begin{proof} It follows from Corollary 3.5 that $i)$ and $ii)$ are equivalent. We shall prove $i)$. Let $\phi_{0}$ be the trivial character
of $<\sigma>$. We  observe that
$\varepsilon (e_{\varphi_{0}})=1$ while $\varepsilon (e_{\phi})=0\ \forall \phi\not\neq \phi_{0}$. Therefore
 $x\in I(H)$ if and only if
$$e_{\phi}x=0\ \forall \phi\not\neq \phi_{0}, \ e_{\phi_{0}}x=x\ \mbox{and}\  \tau x=x \  .$$ We deduce
immediately from  these equalities that $x\in 2Re_{D}$ as required. Let $B$ be a PHS for $A$. It follows from  $ii)$ and Corollary 3.3
 that
$\mathcal{D}_{B}=nB$.
\end{proof}
\vskip 0.1 truecm
\noindent {\bf Remark.}\ Note that instead of working with the hereditary order $H$
we could just as well work with any Hopf order $\ H^{\prime }$ in $\ H_{K\ }$%
 containing $R\left[ D\right] ;$
then $\mbox{Spec}\left( A'=H^{\prime D}\right) $ will be a non-constant (but
generically constant) group scheme.  The case of a suitable maximal $%
R $-order in $H_{K}\ $ could also be particularly interesting.
\subsubsection{The equivariant symmetric bundle.}

We fix an
abelian character $\chi $ of the cyclic group $\left\langle \sigma
\right\rangle $ with order greater than 2, and we consider the quadratic $R$-module$\ \left( M, q\right) $:

$$
\ M=R\left\langle \sigma \right\rangle \circ \left\langle \tau \right\rangle
.e_{\chi }=R.e_{\chi }+R.\tau e_{\chi }=R.e_{\chi }+R.e_{\overline{\chi }%
}\tau $$
endowed with the quadratic form%
$$
q\left( x,y\right) =\frac{1}{2}tr\left( x.\tau .\overline{y}\right),
$$
where $tr:K\left[ D\right] \rightarrow K$ denotes the usual trace map where for $d\in D$

$$ tr\left( d\right) =2n\ \mbox{\ if }d=1_{D} \ \mbox{and}\ 0\ \mbox{otherwise}\ .$$

\begin {lem} $(M, q)$ is an $A$-equivariant symmetric bundle.
\end{lem}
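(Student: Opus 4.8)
The plan is to verify the two defining conditions of an $A$-equivariant symmetric bundle for the pair $(M,q)$: that $M$ is an $A$-comodule (equivalently an $A^D = H$-module), that $q$ is non-degenerate and symmetric, and that $q(gm,n) = q(m, S^D(g)n)$ for all $g \in A^D = H$ and $m,n \in M$. Recall that $A^D$ is identified with $H = \mathcal{M}\circ\langle\tau\rangle$, and $H$ acts on $M = Re_\chi + Re_{\overline\chi}\tau$ by left multiplication inside $K[D]$; one first checks that $M$ is stable under this action, which follows from $e_\phi e_\chi = \delta_{\phi,\chi}e_\chi$, $e_\phi\tau = \tau e_{\overline\phi}$ and $\tau\cdot\tau e_\chi = e_\chi$, so that the generators $\tau$ and the $e_\phi$ all map $M$ into $M$. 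This gives $M$ the structure of an $H = A^D$-module, hence (by Proposition 1.3 of [CEPT], cited in the excerpt) an $A$-comodule structure.

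Next I would treat the bilinear form $q(x,y) = \tfrac12 \operatorname{tr}(x\tau\overline{y})$. Here $\overline{\cdot}$ denotes the canonical involution of $K[D]$ sending $d\mapsto d^{-1}$, which is exactly the antipode $S$ on the group algebra; note $S^D$ on $A^D = H$ is this same antipode. Symmetry of $q$ follows from the trace property $\operatorname{tr}(ab) = \operatorname{tr}(ba)$ together with the fact that $\operatorname{tr}(\overline{z}) = \operatorname{tr}(z)$ and $\overline{x\tau\overline{y}} = y\,\overline\tau\,\overline{x} = y\tau\overline{x}$ (using $\overline\tau = \tau^{-1} = \tau$): thus $\operatorname{tr}(x\tau\overline y) = \operatorname{tr}(\overline{x\tau\overline y}) = \operatorname{tr}(y\tau\overline x)$, giving $q(x,y) = q(y,x)$. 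For non-degeneracy I would exhibit the Gram matrix of $q$ on the $R$-basis $\{e_\chi,\ \tau e_\chi\}$: using $\operatorname{tr}(d) = 2n$ for $d = 1_D$ and $0$ otherwise, and the relations $e_\chi e_{\overline\chi} = e_\chi e_\chi$ type identities plus $\tau e_\chi \tau = e_{\overline\chi}$, one computes $q(e_\chi, e_\chi)$, $q(e_\chi, \tau e_\chi)$, $q(\tau e_\chi, \tau e_\chi)$ and checks the determinant is a unit in $R$ (it will come out to something like $-1/4$ up to sign, using that $\chi$ has order $>2$ so $e_\chi \neq e_{\overline\chi}$).

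Finally, for the equivariance identity $q(gm,n) = q(m, S^D(g)n)$: since both sides are $R$-bilinear in $m,n$ and $R$-linear in $g$, it suffices to check it for $g$ ranging over the $R$-algebra generators $\tau$ and $e_\phi$ of $H$, and for $m,n$ in the basis. Using $S^D(\tau) = \tau^{-1} = \tau$ and $S^D(e_\phi) = e_{\overline\phi}$, the identity reduces to the computation $\operatorname{tr}(gx\cdot\tau\cdot\overline y) = \operatorname{tr}(x\cdot\tau\cdot\overline{S^D(g)\,y})$, i.e. $\operatorname{tr}(gx\tau\overline y) = \operatorname{tr}(x\tau\overline y\, \overline{S^D(g)}) = \operatorname{tr}(x\tau\overline y\, g)$ once one observes $\overline{S^D(g)} = g$ for $g \in \{\tau, e_\phi\}$; this last equality is then just the cyclicity of the trace. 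I expect the main obstacle to be purely bookkeeping: keeping straight the three involutions in play (the bar, the antipode $S$ on $H_K$, and the dual antipode $S^D$ on $A^D$) and confirming they agree appropriately, together with the sign/normalization constants $n$ and $1/2$ so that non-degeneracy genuinely yields a unit determinant over $R$; once those identifications are nailed down, every step is a short direct calculation.
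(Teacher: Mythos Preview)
Your proposal is correct and follows essentially the same line as the paper's proof; both check the $H$-module structure by observing $M$ is a left ideal, compute the Gram matrix on the basis $\{e_\chi,\ \tau e_\chi = e_{\overline\chi}\tau\}$, and verify compatibility with the antipode via trace cyclicity. Two minor corrections: the Gram matrix is $\begin{pmatrix}0&1\\1&0\end{pmatrix}$ with determinant exactly $-1$ (not $-1/4$), and the paper phrases the equivariance step as the $D$-invariance $q(dx,dy)=q(x,y)$ for $d\in D$, which is equivalent to your formulation $q(gx,y)=q(x,S^D(g)y)$ once one passes by $K$-linearity from $D$ to $H\subset K[D]$; your observation that $\overline{S^D(\cdot)}$ is the identity on $K[D]$ (both maps being $d\mapsto d^{-1}$) makes this step a one-liner and is slightly cleaner than reducing to generators.
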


\begin{proof} It is immediate from the definition that $M$ is an $H=A^{D}$-module.
 Note that
$$
q\left( dx,dy\right) =\frac{1}{2}tr\left( dx.\tau .\overline{y}d^{-1}\right)
=\frac{1}{2}tr\left( x.\tau .\overline{y}\right) =q\left( x,y\right)
$$
so that $q$ is indeed $D$-invariant. Of course we also have
$$
q\left( e_{\chi },e_{\overline{\chi }}.\tau \right) =\frac{1}{2}tr\left(
e_{\chi }\tau ^{2}e_{\chi }\right) =\frac{1}{2}tr\left( e_{\chi }\right) =1$$
$$
q\left( e_{\chi },\ e_{\chi }\right) =0=q\left( e_{\overline{\chi }}.\tau
,\ e_{\overline{\chi }}.\tau \right) ;
$$
and so $q$ is an$\ R$-perfect pairing on $M$, and in fact is seen to have discriminant -1.
\end{proof}

\subsubsection{Twists of the form}

Let $B$ be a PHS for $A$ over $R$. The structure map
$$\alpha_{B}: B\rightarrow B\otimes A$$
induces an isomorphism of $B$-algebras and $H$-modules
$$i_{d}\otimes \alpha_{B}: B\otimes_{R} B\simeq B\otimes_{R} A$$
(recall that H acts on each side via the right-hand factors).
We put $C=\left\langle \sigma \right\rangle$ and set $E= B^{C}$.

\begin{prop}
$E $ is a PHS for $A^{C}$  and $\mathcal{D}_{A^{C}/R}=A^{C}.$
\end{prop}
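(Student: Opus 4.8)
The plan is to make $A^{C}$ completely explicit; both assertions then fall out. By construction $A^{C}$ is the sub-Hopf-algebra of $A$ obtained by applying the fixed-point construction of Section~2 to $A$, regarded as a comodule over the quotient Hopf order $\overline{A}$ of $A$ dual to the sub-Hopf order $\mathcal{M}=H\cap K\langle\sigma\rangle$ of $A^{D}=H$; thus $\overline{A}=\mathcal{M}^{D}$ and $(A^{C})^{D}=H/H\mathcal{M}^{+}$, where $\mathcal{M}^{+}=\mathcal{M}\cap\ker\varepsilon=\oplus_{\phi\neq\phi_{0}}Re_{\phi}$. Using the presentation of $H$ from Subsection~5.3.1, the $R$-algebra map $H\to K[D/C]$ induced by $\sigma\mapsto 1,\ \tau\mapsto\tau$ sends each $e_{\phi}$ to $\delta_{\phi,\phi_{0}}$ and each $e_{\phi}\tau$ to $\delta_{\phi,\phi_{0}}\tau$; since $\mathrm{rk}_{R}H=2n$, the image has rank $2$, and $H\mathcal{M}^{+}=\mathcal{M}^{+}\oplus\mathcal{M}^{+}\tau$ has rank $2n-2$, the kernel is exactly $H\mathcal{M}^{+}$, so $(A^{C})^{D}\simeq R[D/C]=R[\mathbf{Z}/2]$ and hence $A^{C}\simeq\mathrm{Map}(D/C,R)\simeq R\times R$. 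As $2$ is a unit in $R$, this $R$-algebra is finite étale over $R$, so its trace form is unimodular, i.e. $\mathcal{D}_{A^{C}/R}=A^{C}$. (Equivalently, Corollary~3.5 gives $\varepsilon(I(A^{C}))\,\varepsilon^{D}(I((A^{C})^{D}))=|D/C|\,R=R$, whence $\varepsilon(I(A^{C}))=R$ and Proposition~3.2 yields $\mathcal{D}^{-1}(A^{C})=A^{C}$.)

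For the first assertion the guiding idea is that $E=B^{C}$ realizes the quotient of the $\mathrm{Spec}(A)$-torsor $X=\mathrm{Spec}(B)$ by the closed normal subgroup scheme $\mathrm{Spec}(\overline{A})$ (normal, since $C$ is normal in $D$), and the quotient of a torsor by a normal closed subgroup scheme is a torsor for the quotient group scheme $\mathrm{Spec}(A^{C})$. I would make this precise as follows. First, $\mathcal{M}=\overline{A}^{D}$ satisfies $\mathbf{H_{1}}$ (it is commutative, $I(\mathcal{M})=Re_{\phi_{0}}$ is $R$-free, $S(e_{\phi_{0}})=e_{\phi_{0}}$, and $\mathcal{M}_{K}=K\langle\sigma\rangle$ is separable), and $B$ — which is $H$-projective, by faithfully flat descent from the PHS isomorphism $B\otimes_{R}B\simeq B\otimes_{R}A$ — is $\mathcal{M}$-projective because $H=\mathcal{M}\oplus\mathcal{M}\tau$ is $\mathcal{M}$-free. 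Hence Lemma~2.5 applies: $E=B^{C}=e_{\phi_{0}}B$ is the subring $B\cap B_{K}^{C}$ of $B$, finite and locally free over $R$ of rank $|D|/|C|=2$. A short Sweedler computation (coassociativity of $\alpha_{B}$ and $B^{C}=B^{\overline{A}}$) shows $\alpha_{B}(E)\subseteq E\otimes_{R}A^{C}$, so $E$ is an $A^{C}$-comodule algebra; and $E^{A^{C}}\subseteq B^{A}=R$ forces $E^{A^{C}}=R$. It remains to obtain the isomorphism $(I_{d}\otimes 1,\alpha_{E})$, and here I would take $\overline{A}$-fixed points — i.e. multiply by $1_{B}\otimes e_{\phi_{0}}$ — in the PHS isomorphism $\varphi\colon B\otimes_{R}B\simeq B\otimes_{R}A$ of $B$-algebras and $H$-modules, $H$ acting through the right-hand factor. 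Since $\varphi$ is $\mathcal{M}$-equivariant it commutes with $1_{B}\otimes e_{\phi_{0}}$; using that $B\otimes_{R}(-)$ commutes with $\overline{A}$-fixed points ($B$ flat), that $(B\otimes_{R}B)^{\overline{A}}=B\otimes_{R}E$, and that $(B\otimes_{R}A)^{\overline{A}}=B\otimes_{R}(e_{\phi_{0}}A)=B\otimes_{R}A^{C}$ (Lemma~2.5 for the $H$-free module $A$), the map $\varphi$ restricts to an isomorphism of $B$-algebras $B\otimes_{R}E\simeq B\otimes_{R}A^{C}$, which one checks is $(A^{C})^{D}=R[D/C]$-equivariant (the residual action being induced from $H$ via $H\to R[D/C]$). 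Thus $\mathrm{Spec}(E)$, with its $\mathrm{Spec}(A^{C})$-action, becomes the trivial torsor after the faithfully flat base change $\mathrm{Spec}(B)\to\mathrm{Spec}(R)$; by the equivalence between torsors and principal homogeneous spaces recalled in Section~4, $E$ is a PHS for $A^{C}$, and in particular $(I_{d}\otimes 1,\alpha_{E})\colon E\otimes_{R}E\to E\otimes_{R}A^{C}$ is an isomorphism.

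The main obstacle is this last step: one must carefully match the $\mathcal{M}$-module fixed points with the $\overline{A}$-comodule fixed points, verify that the resulting $B$-algebra isomorphism is equivariant for the quotient group $D/C$ rather than merely for $H$, and descend correctly along the fppf cover $\mathrm{Spec}(B)\to\mathrm{Spec}(R)$ to conclude that $\mathrm{Spec}(E)$ is a genuine $\mathrm{Spec}(A^{C})$-torsor. By contrast, the explicit identification of $A^{C}$ and the triviality of $\mathcal{D}_{A^{C}/R}$ are routine computations once the structure of $H$ is in hand.
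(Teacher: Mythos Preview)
Your argument is correct and runs parallel to the paper's. Both proofs identify $A^{C}$ with $\mathrm{Map}(D/C,R)$ (the paper by analysing the $D$-action on $A_{K}$ and recognising $R\langle\tau\rangle$ as a sub-Hopf order of $H$; you by computing the quotient $H/H\mathcal{M}^{+}$), both take $C$-fixed points in the PHS isomorphism $\varphi$ to obtain $\beta\colon B\otimes_{R}E\simeq B\otimes_{R}A^{C}$, and both deduce $\mathcal{D}_{A^{C}/R}=A^{C}$ from \'etaleness.

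The one substantive divergence is in the last step. You descend directly along the fppf cover $\mathrm{Spec}(B)\to\mathrm{Spec}(R)$: since $\beta$ is $(A^{C})^{D}$-equivariant, the base change $B\otimes_{R}\gamma$ of $\gamma=(I_{d}\otimes 1,\alpha_{E})$ is identified with the action map for the trivial torsor and is therefore an isomorphism, whence $\gamma$ is. The paper instead first establishes that $B$ is faithfully flat over $E$ --- observing that $A$ is free over $A^{C}$, so $B\otimes_{R}B$ is free over $B\otimes_{R}E$ via $\beta$, and then descending flatness along $B/R$ --- and descends $\gamma$ along $B/E$ rather than $B/R$. Your route is shorter and avoids this detour; the paper's has the incidental byproduct that $B/E$ is faithfully flat, though this is not reused downstream. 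Your concern about the equivariance check is well placed but the check goes through: normality of $C$ in $D$ gives $\tau\mathcal{M}^{+}=\mathcal{M}^{+}\tau$, so $H\mathcal{M}^{+}$ is a two-sided ideal annihilating both $E=e_{\phi_{0}}B$ and $A^{C}$, and the $H$-equivariance of $\varphi$ on the right-hand factors descends to $R[D/C]$-equivariance of $\beta$.
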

\begin{proof} Because $B$ and $A$ are $R$-flat we have the
isomorphism induced by taking the $C$-fixed points:
$$\beta: B\otimes_{R}E\simeq B\otimes_{R}A^{C} .$$
We know that $B$ is finite and flat and hence faithfully flat over $R$. Moreover it follows from the definitions that $A^{C}$
is a finite and free $R$-module. This implies that $B\otimes_{R}A^{C}$ and thus $B\otimes_{R}E$ is flat over $B$ and
so that $E$ is flat over $R$ ([W], XIII, 1.3.3). We have therefore shown that $E$ is a commutative, finite and flat $R$-algebra.

Let $q:D\rightarrow \left\langle \tau \right\rangle $ be the quotient group
homomorphism with kernel $C.$ Because 2 is invertible in $R$, we know that $%
R\left\langle \tau \right\rangle $ is the unique maximal $R$-order in $%
K\left\langle \tau \right\rangle;$ and so in particular we see that $%
\mathrm{Spec}\left( R\left\langle \tau \right\rangle \right) $ is a closed
subgroup scheme of $\mathrm{Spec}(H)) .$ We recall the inclusions
$$A^{C}\subset A=\mbox {Hom}_{R}(H, R)\subset A_{K}=\mbox {Hom}_{K}(K[D], K) .$$
The group $D$ acts on $A_{K}$ via the rule that for all $f \in A_{K}$, and for all $\gamma\in D$
$$f^{\gamma}: \alpha\mapsto f(\alpha\gamma^{-1}).$$
It therefore follows that for any $f\in A$ and for any character $\phi$ of $C$ we have:
$$f^{\sigma}(e_{\phi})=f(e_{\phi^{\sigma^{-1}}})=\phi(\sigma^{-1})f(e_{\phi})\ \mbox{and}\ f^{\sigma}(e_{\phi}\tau)=
\phi(\sigma)f(e_{\phi}\tau)\ .$$
Therefore,  by using the description of $H$ above, we deduce that  dually
$\mathrm{Map}\left( \left\langle \tau \right\rangle ,R\right) \ $%
identifies as $A^{C}$ and so $\mathrm{Spec}\left( A^{C}\right) $
identifies as a quotient group scheme of $\mathrm{Spec}\left( A\right) $.

We now observe that $\beta$ induces an action map
$$\gamma: E\otimes_{R}E\rightarrow E\otimes_{R}A^{C} .$$
In order to show that $\gamma$ is an isomorphism it suffices to prove that $B$ is faithfully flat over $E$. One
checks easily that $A$ is free over $A^{C}$. Therefore $B\otimes_{R}A$ is free over $B\otimes_{R}A^{C}$ and similarly $B\otimes_{R}B$ is free over $B\otimes_{R}E$. Using once again that $B$ is faithfully flat over $R$ we deduce that $B$ is
flat over $E$. Since $B$ is finite over $E$ we conclude that it is faithfully flat over $E$ and thus that $\gamma$ is an isomorphism.
We have proved that $E$ is a PHS for $A^{C}$. Since by definition $A^{C}$ is \'etale over $R$, then $\mathcal{D}_{A^{C}/R}=A^{C}$
\end{proof}

We recall that  $\mathcal{M}$ denotes the split maximal $R$-order in $K[C]$.  If we let $L$ denote the ring of fractions of $E$ then  $%
\mathcal{M}_{E}=E\otimes _{R}\mathcal{M}$ is the split maximal order in $L[C]$.  We consider the duals
$\ \mathcal{N}=\mathrm{Hom}_{R}\left( \mathcal{M},R\right) $ and
$\mathcal{N}_{E}=\mathcal{N}\otimes _{R}E=\mathrm{%
Hom}_{E}\left( \mathcal{M}, E\right) ;$ by duality these are the minimal Hopf orders in
$\mbox{Map}\left( C,K\right) $ and  $\mbox{Map}\left( C,L\right) $ respectively. Then we have
$$\mathcal{D}_{\mathcal{N}/R}=n\mathcal{N}\ \mbox{and}\ \mathcal{D}_{\mathcal{N}_{E}/E}=n\mathcal{N}_{E}\ .$$

\begin {prop} $B$ is a PHS of $\mathcal{N}_{E}$ over $E$.
\end{prop}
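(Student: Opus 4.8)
The plan is to verify the two conditions in the definition of a principal homogeneous space: that $B^{\mathcal{N}_E}=E$, and that the canonical morphism $(\mathrm{Id}\otimes 1,\alpha_B)\colon B\otimes_E B\to B\otimes_E\mathcal{N}_E$ is an isomorphism of $B$-algebras and of left $\mathcal{N}_E^D=\mathcal{M}_E$-modules. The first step will be to recognise $\mathrm{Spec}(\mathcal{N})$ as the kernel of the quotient morphism $\mathrm{Spec}(A)\to\mathrm{Spec}(A^C)$ of the previous Proposition. Using the description $H=\mathcal{M}\circ\langle\tau\rangle$, the subalgebra $\mathcal{M}$ is a sub-Hopf-order of $H$, normal because $\tau e_\phi\tau=e_{\overline\phi}\in\mathcal{M}$, and $H=\mathcal{M}\oplus\mathcal{M}\tau$, so $H\mathcal{M}^+=\mathcal{M}^+\oplus\mathcal{M}^+\tau$ and $H/H\mathcal{M}^+$ is $R$-free of rank $2$, identified with the Hopf algebra $R\langle\tau\rangle$. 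Dualising, $\mathrm{Spec}(\mathcal{N})=\mathrm{Spec}(\mathcal{M}^D)$ is a closed normal subgroup scheme of $\mathrm{Spec}(A)$ whose quotient has coordinate ring $(R\langle\tau\rangle)^D=\mathrm{Map}(\langle\tau\rangle,R)=A^C$, in agreement with the previous Proposition; hence the surjection $A\twoheadrightarrow\mathcal{N}$ dual to $\mathcal{M}\hookrightarrow H$ has kernel $J:=A\cdot(A^C)^+$.

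Next I would pin down the $\mathcal{N}_E$-comodule algebra structure on $B$ and settle the condition $B^{\mathcal{N}_E}=E$. Composing the PHS coaction $\alpha_B\colon B\to B\otimes_R A$ with $A\twoheadrightarrow\mathcal{N}$ gives an $\mathcal{N}$-coaction $B\to B\otimes_R\mathcal{N}$; since $\mathcal{N}_E=\mathcal{N}\otimes_R E$ one has $B\otimes_E\mathcal{N}_E=B\otimes_R\mathcal{N}$ canonically, and as this coaction is an algebra homomorphism sending $e\in E=B^{\mathcal N}$ to $e\otimes 1$, it is $E$-linear and makes $B$ an $\mathcal{N}_E$-comodule algebra over $E$. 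A short computation with the idempotents $e_\phi=n^{-1}\sum_{\varsigma\in C}\phi(\varsigma)\varsigma^{-1}$, of the same type as the one determining $I(H)$ above, shows that $b\in B$ satisfies $e_\phi b=\varepsilon(e_\phi)b$ for every character $\phi$ precisely when $b$ is fixed by $C$; hence $B^{\mathcal{N}_E}=B^{\mathcal N}=B^C=E$. Here $B$ is finite and faithfully flat over $E$ by the previous Proposition, while $\mathcal{N}_E$ is finite flat over $E$ because $\mathcal{N}=\mathrm{Hom}_R(\mathcal{M},R)$ is finite projective over $R$.

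For the second condition I would compare ideals. Let $\varphi=(\mathrm{Id}\otimes 1,\alpha_B)\colon B\otimes_R B\stackrel{\sim}{\longrightarrow}B\otimes_R A$ be the isomorphism of $B$-algebras and $A^D$-modules provided by the hypothesis, and recall from the proof of the previous Proposition that the $A$-coaction restricts to an $A^C$-coaction $\alpha_E\colon E\to E\otimes_R A^C$ for which $\gamma\colon E\otimes_R E\stackrel{\sim}{\longrightarrow}E\otimes_R A^C$, $\gamma(e\otimes e')=(e\otimes 1)\alpha_E(e')$, is an isomorphism. Now $B\otimes_E B$ is the quotient of $B\otimes_R B$ by the ideal generated by $\{1\otimes e-e\otimes 1:e\in E\}$, and $B\otimes_R\mathcal{N}$ is the quotient of $B\otimes_R A$ by $B\otimes_R J$; so it suffices to check that $\varphi$ carries the first ideal onto the second, for then the canonical map $B\otimes_E B\to B\otimes_E\mathcal{N}_E$ induced by $\varphi$ on the quotients is an isomorphism. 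One computes $\varphi(1\otimes e-e\otimes 1)=\alpha_B(e)-e\otimes 1$; writing $\alpha_E(e)=\sum_i c_i\otimes a_i$ with $c_i\in E$ and $a_i\in A^C$, the counit identity gives $\sum_i c_i\varepsilon(a_i)=e$, whence $\alpha_B(e)-e\otimes 1=\sum_i(c_i\otimes 1)\bigl(1\otimes(a_i-\varepsilon(a_i))\bigr)\in B\otimes_R J$. Conversely, surjectivity of $\gamma$ shows that the elements $\alpha_B(e)$, $e\in E$, generate $E\otimes_R A^C$ as a left $E$-module; given $a\in A^C$, writing $1\otimes a=\sum_j(e'_j\otimes 1)\alpha_B(e_j)$ and applying $\mathrm{Id}\otimes\varepsilon$ yields $\sum_j e'_j e_j=\varepsilon(a)$, so that $1\otimes(a-\varepsilon(a))=\sum_j(e'_j\otimes 1)(\alpha_B(e_j)-e_j\otimes 1)$ lies in the first ideal, and since $B\otimes_R J$ is generated by such elements the two ideals coincide. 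The compatibility of the induced map with the $\mathcal{M}_E$-module structures is then inherited from that of $\varphi$ with the $A^D=H$-action by restricting to $\mathcal{M}\hookrightarrow H$ and passing to the quotients.

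The one step I expect to require genuine care is the first: ensuring that the subgroup scheme cut out as the kernel of $\mathrm{Spec}(A)\to\mathrm{Spec}(A^C)$ is exactly $\mathrm{Spec}(\mathcal{N})$ with $\mathcal{N}$ the minimal Hopf order in $\mathrm{Map}(C,K)$ — integrally, and especially at the primes dividing $n$, where $\mathrm{Spec}(\mathcal{N})$ fails to be \'etale. The explicit decomposition $H=\mathcal{M}\oplus\mathcal{M}\tau$ is what makes this work, and the outcome is consistent with the codifferent values $\mathcal{D}_{B/R}=nB$ and $\mathcal{D}_{\mathcal{N}_E/E}=n\mathcal{N}_E$ recorded above; the remainder of the argument runs parallel to the proof of the previous Proposition.
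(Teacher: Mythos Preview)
Your argument is correct but takes a genuinely different route from the paper. The paper obtains the action map $B\otimes_E B\to B\otimes_E\mathcal{N}_E$ much as you do (by composing the PHS isomorphism $B\otimes_R B\simeq B\otimes_R A$ with the surjection $A\to\mathcal{N}$ and factoring through $B\otimes_E B$), observes that it is injective, and then proves it is an isomorphism by a discriminant comparison: the tower formula together with Lemma 5.4 and Proposition 5.6 gives $\mathcal{D}_{B/E}=\mathcal{D}_{B/R}\,\mathcal{D}_{E/R}^{-1}=nB$, which matches $\mathcal{D}_{\mathcal{N}_E/E}=n\mathcal{N}_E$, so the two $B$-orders in the common $B_K$-algebra coincide. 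You instead show directly that $\varphi$ carries the kernel of $B\otimes_R B\twoheadrightarrow B\otimes_E B$ onto the kernel $B\otimes_R J$ of $B\otimes_R A\twoheadrightarrow B\otimes_R\mathcal{N}$, using the PHS isomorphism $\gamma$ for $A^C$ from the previous Proposition to produce preimages; this is the standard Hopf--Galois descent argument and needs no discriminant input at all. The paper's route is shorter in this setting because the relevant different calculations are already in hand and it recycles them efficiently; your route is more structural and would transfer to situations where trace forms are degenerate or unavailable. The codifferent identities you record at the end as a consistency check are in fact the \emph{core} of the paper's own proof.
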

\begin{proof}
The inclusion map $C\hookrightarrow D$ induces dual maps

$$\begin{array}{ccc}
\mathcal{M} & \hookrightarrow & H \\
\downarrow &  & \downarrow \\
KC & \hookrightarrow & KD%
\end{array} $$
$$
\begin{array}{ccc}
A & \rightarrow & \mathcal{N} \\
\downarrow &  & \downarrow \\
\mathrm{Map}\left( D,K\right) & \rightarrow & \mathrm{Map}\left( C,K\right)%
\end{array} $$
and the isomorphism $i_{d}\otimes \alpha_{B}$ induces a map
$$
B\otimes _{R}B\rightarrow B\otimes _{R}A\rightarrow B\otimes _{R}\mathcal{N}\cong
B\otimes _{E}E\otimes _{R}\mathcal{N}\cong B\otimes _{E}\mathcal{N}_{E};
$$
as $\ \mathcal{N}$ acts trivially on $E$ this map actually factors through $B\otimes
_{E}B,$ and so in summary we have produced the action map
$$
B\otimes _{E}B\rightarrow B\otimes _{E}\mathcal{N}_{E} \ .$$
In order to show that this injective map is in fact an isomorphism we shall
show that their discriminants coincide. By the tower formula we know that
$$
\mathcal{D}_{B/E}=\mathcal{D}_{B/R}\mathcal{D}_{E/R}^{-1}=\mathcal{D}%
_{B/R}=nB;
$$
here the second equality comes from Proposition 5.6 and the third equality
comes from Lemma 5.4. Since we know that $\mathcal{D}_{\mathcal{N}_{E}/E}=n\mathcal{N}_{E}$ we  conclude  that
the map  is
indeed an isomorphism.
\end{proof}

Next we consider\ the ring $E$ over the maximal order $R\left\langle \tau
\right\rangle .\ $By the above we may write $\ E$ as a direct sum \ of two
rank one free $\ R$-modules $\ E=E_{+}\oplus E_{-}$ where $\tau $ acts on $\
E_{+}=R$ trivially and on $E_{-}$ by $-1.$ Choosing a generator $\delta $
for $E_{-}$ over $\ R,$ we have an element of$\ E$ with the property that $%
\delta ^{2}\in R;$ moreover, as $\ E$ \ is a $\left\langle \tau
\right\rangle $-torsor, we know that in fact $\ \delta ^{2}\in R^{\times }.$

We now apply a somewhat similar analysis to $\ B\ $ viewed initially as an $%
\ \mathcal{M}_{E}$-module. We may then write $\ B=\oplus B_{\chi }$ with $\chi $
ranging over the abelian characters of $\left\langle \sigma \right\rangle ,$
and with each $B_{\chi }$ a free rank one $R$-module, with generator $%
t_{\chi }$ and with $t_{\chi }t_{\phi }$ divisible by $t_{\chi \phi }\ $(see
2.e in [CEPT])$;$ since$\ B/E\ $ is an $\ \mathcal{N}_{E}$-torsor by Proposition 5.7 we
can write:

$$
B=E[X] \mbox{mod}( X^{n}-\alpha ^{n})$$
for some $\alpha ^{n}\in E^{\times }, $ with  $
a=\alpha .^{\tau }\alpha \in R^{\times }.\ $

We now assume that $A$ satisfies ${\bf H_{2}}$,  (Definition 2, Section 3.2), which reduces,  in this particular case, to requiring  that $nR$  is
 the square of a principal  ideal.  For the sake of simplicity we shall assume that $n$ is  a square of $R$ when it is not a unit. We denote by
 $n^{1/2}$ a square root of $n$. Moreover we choose $\chi$ as  $\left\langle \sigma \right\rangle$-character of $\alpha$. These preparations being in place, we can now determine the twist of $(M, q)$ by $B$
 which is defined, according to Definition 3,  by:

$$ (\tilde M_{B}, \tilde q_{B})= (cB\otimes_{R}M, Tr\otimes q)^{A}, $$
where $c=1$ (resp. $n^{-1/2}$) if $n$ is a unit (resp. otherwise).

\begin{prop}We  have the following equalities:
\begin{itemize}
\item[i)]
$\tilde M_{B}=R\varepsilon_{1}\oplus R\varepsilon_{2}$, where we set
$$\varepsilon_{1}=c\alpha^{\tau}\otimes e_{\chi}+c\alpha\otimes e_{\bar \chi}\tau
\ \mbox{and}\ \varepsilon_{2}=c\delta\alpha^{\tau}\otimes e_{\chi}-c\delta\otimes e_{\bar\chi}\tau .$$

\item[ii)] $$\tilde q_{B}(\varepsilon_{1}, \varepsilon_{1})=2a, \ \tilde q_{B}(\varepsilon_{2}, \varepsilon_{2})=
-2a\delta^{2}, \ \tilde q_{B}(\varepsilon_{1}, \varepsilon_{2})=0 .$$
\end{itemize}
\end{prop}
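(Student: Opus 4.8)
The plan is to compute $\tilde M_B = (cB\otimes_R M, Tr\otimes q)^A$ explicitly using Lemma 2.5, which tells us that $\tilde M_B = \theta^D(cB\otimes_R M)$ where $\theta^D$ is an $R$-basis of $I(A^D)=I(H)$. First I would recall from Lemma 5.4 that $I(H)=2R\,e_D = n^{-1}R\sum_{d\in D}d$, and fix the integral $\theta = n\,l_0$ of $A$ with the dual integral $\theta^D$ characterized by $\theta^D\theta = 1_A$; the relevant scaling factor $c$ (equal to $1$ or $n^{-1/2}$) comes from the normalization $\lambda^{1/2}$ of Definition 3. The next step is to produce, via the decompositions $B = \oplus_\phi B_\phi$ over $\langle\sigma\rangle$-characters and $E = E_+\oplus E_-$ over $\langle\tau\rangle$, an explicit $R$-basis of the free module $cB\otimes_R M$ adapted to the $H$-action, and then apply $\theta^D$ to it. Because $M = R\,e_\chi + R\,e_{\bar\chi}\tau$ is concentrated in the $\chi$- and $\bar\chi$-isotypic parts for $\langle\sigma\rangle$, applying the integral $\theta^D$ (which involves averaging over $D$) kills all components of $B$ except the pieces pairing correctly with $e_\chi$ and $e_{\bar\chi}\tau$; this is exactly the mechanism already seen in Proposition 5.2. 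I expect this to single out the combinations $\alpha^\tau\otimes e_\chi$ paired against $\alpha\otimes e_{\bar\chi}\tau$, leading after the $\tau$-eigenspace bookkeeping (using the generator $\delta$ of $E_-$ with $\delta^2\in R^\times$) to the two generators $\varepsilon_1$ and $\varepsilon_2$ displayed in (i). I would then check these two elements are $A^D$-invariant and $R$-independent, hence an $R$-basis of the rank-two bundle $\tilde M_B$.

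For part (ii) the plan is to use the definition of $q^A$ from Section 2.2: for $x=\theta^D u$, $y=\theta^D v$ in $\tilde M_B$ one has $\tilde q_B(x,y) = (Tr\otimes q)(u, y)$ (or symmetrically $(Tr\otimes q)(x,v)$), so I would write $\varepsilon_1 = \theta^D u_1$, $\varepsilon_2=\theta^D u_2$ for suitable preimages $u_i\in cB\otimes_R M$ and then evaluate $(Tr\otimes q)$ on $u_i$ against $\varepsilon_j$. Here $Tr$ is the trace form on $B$: since $B = E[X]/(X^n-\alpha^n)$ is generated over $E$ by a normal basis coming from $\mathcal{N}_E$, and since $\mathcal{D}_{B/E}=nB$ with the chosen normalization absorbing the factor $n$, the trace pairing of $c\alpha^\tau$ with $c\alpha$ (and of $c\delta\alpha^\tau$ with $c\delta$) will produce the scalars $2a$ and $-2a\delta^2$, where $a = \alpha\,{}^\tau\!\alpha\in R^\times$ and the sign $-$ and the factor $\delta^2$ track the $\tau$-eigenvalue $-1$ on $E_-$. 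The form $q$ on $M$ contributes $q(e_\chi, e_{\bar\chi}\tau)=1$ and $q(e_\chi,e_\chi)=q(e_{\bar\chi}\tau,e_{\bar\chi}\tau)=0$ from Lemma 5.5, which is what forces the cross term $\tilde q_B(\varepsilon_1,\varepsilon_2)=0$ (the $E_+$ and $E_-$ parts are orthogonal under $Tr$, and the "diagonal" $q$-values vanish).

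The main obstacle I anticipate is purely bookkeeping: correctly tracking how the integral $\theta^D$ acts diagonally on $cB\otimes_R M$ through the comultiplication $\Delta^D$ of $H=A^D$, in particular keeping straight the interaction between the two "directions" of decomposition ($\langle\sigma\rangle$-characters on $B$ versus $M$, and the $\langle\tau\rangle$-twist), and making sure the normalizing constant $c$ and the element $\delta$ enter with the right powers so that the final bundle is genuinely defined over $R$ (not merely over $K$) and the quadratic values land in $R^\times$-multiples as claimed. A secondary technical point is justifying that one may compute $Tr$ on $B$ after base change to the splitting ring $L[C]$ (respectively after adjoining a root of $X^n-\alpha^n$), exactly as was done via $K'/K$ in Lemma 2.4 and Corollary 3.3; this reduces the trace computation to the explicit diagonal algebra $\mathrm{Map}(C,-)$ where everything is transparent. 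Once those normalizations are pinned down, both (i) and (ii) follow by direct substitution.
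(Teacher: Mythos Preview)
Your plan is correct and matches the paper's approach closely: both compute $\tilde M_B$ as the $D$-fixed points of $cB\otimes_R M$ using the explicit decompositions of $B$ coming from Propositions~5.6 and~5.7, and then evaluate the form directly. Two small streamlinings in the paper are worth noting. For (i), the paper first records the reduction $(cB\otimes_R M)^A=(cB\otimes_R M)^D$ (immediate from $A^D=H\supset R[D]$), and then takes fixed points in two stages --- first $C=\langle\sigma\rangle$, giving a rank-four module, then $\tau$ --- rather than applying the full integral $\theta^D$ in one go; this avoids tracking $\Delta^D$ explicitly. For (ii), instead of finding preimages $u_i$ with $\varepsilon_i=\theta^D u_i$, the paper computes $(Tr\otimes q)(\varepsilon_i,\varepsilon_j)$ directly on the elements $\varepsilon_i$ themselves, and then invokes the general relation $(Tr\otimes q)|_{\tilde M_B}=\varepsilon^D(\theta^D)\,\tilde q_B$ from Remark~1 after Proposition~2.9; since here $\theta^D=n^{-1}\sum_{d\in D}d$ has $\varepsilon^D(\theta^D)=2n/n=2$, one simply halves. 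Your route via preimages works equally well but is a little more bookkeeping.
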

\begin{proof} We recall from the definition that
$$\tilde M_{B}= (cB\otimes_{R}M)^{A}=\{z\in cB\otimes_{R}M\ \mid uz=\varepsilon ^{D}(u)z\ \forall u\in A^{D}\} .$$
One easily checks  from the definition of $A^{D}=H$ that
$$(cB\otimes_{R}M)^{A}=(cB\otimes_{R}M)^{D} . $$ We now observe that Propositions 5.6 and 5.7 provide us with a free $R$-basis
of $B$. Moreover,  we know the action of $D$ on the elements of this basis. Hence by a straightforward computation
we obtain the equality:
$$(cB\otimes_{R}M)^{C}=R(c\alpha^{\tau}\otimes e_{\chi})+R(c\delta\alpha^{\tau}\otimes e_{\chi})
+R(c\alpha\otimes e_{\bar \chi}\tau) +R(c\delta \alpha\otimes e_{\bar \chi}\tau) .$$ It now suffices to take the $c$-fixed
points of the right-hand side of the equality above to obtain $i)$. In order to prove $ii)$ we shall assume that $n$ is not a unit; the easier case
where $n$ is a unit is left to the reader. From the definitions we obtain that
$$(Tr\otimes q)(\varepsilon_{1}, \varepsilon_{1})=2Tr(n^{-1}\alpha\alpha^{\tau})q(e_{\chi}, e_{\bar \chi}\tau)=4a,\
(Tr\otimes q)(\varepsilon_{1}, \varepsilon_{2})=0 $$ and
$$(Tr\otimes q)(\varepsilon_{2}, \varepsilon_{2})=-2Tr(n^{-1}\delta^{2}\alpha\alpha^{\tau})q(e_{\chi}, e_{\bar \chi}\tau)
=-4a\delta^{2} . $$
Finally we have to compare the forms $Tr\otimes  q$ and $(Tr\otimes q)^{A}$ on $\tilde M_{B}$. Using Lemma 2.5 and Lemma 5.4, we note that
$\tilde M_{B}=\theta^{D}M_{B}$, with $\theta^{D}=n^{-1}\sum_{u\in D}u$. Then,  for any element $m$ and $n$ in
$M_{B}$, we have:
$$(Tr\otimes  q)(\theta^{D}m, \theta^{D}n)=n^{-1}\sum_{u\in D}(Tr\otimes  q)(um, \theta^{D}n)$$
$$=2(Tr\otimes  q)(m, \theta^{D}n)=2(Tr\otimes  q)^{A}(\theta^{D}m, \theta^{D}n) .$$
We conclude that $\tilde q_{B}=\frac{1}{2}(Tr\otimes  q)$ and so $ii)$ follows from the previous equalities.
\end{proof}
\medskip

\noindent {\bf Remark} We observe that the discriminant of the form $\tilde q_{B}$ is equal to $-\delta^{2}$,  up to a square.
Therefore, if $-1$ is a square of $R$, since we know that $\delta^{2}$ is not a square of $R$,  we deduce that the discriminant of $\tilde q_{B}$
is not a square and thus that the forms $q$ and $\tilde q_{B}$ are not isometric.

 \bigskip

\bigskip

philippe.cassou-nogues@math.u-bordeaux1.fr

\smallskip

ted@math.upenn.edu

\smallskip

baptiste.morin@math.u-bordeaux1.fr

\smallskip

martin.taylor@merton.ox.ac.uk


\begin{thebibliography}{6}

\bibitem[BL]{BaL}Bayer-Fluckiger, E., Lenstra, H.W.: {\it Forms in odd degree extensions and
self-dual normal bases.} Amer. J. Math. {\bf 112}(1990), 359-373.






\bibitem [C]{Ch} Childs, L.N:{\it Taming wild extensions: Hopf algebras and local Galois module theory. } American
Mathematical Society, Mathematical survey and Monographs, {\bf 80}(2000)


\bibitem[CEPT]{CEPT-Duke} Chinburg, T., Erez, B., Pappas, G., Taylor, M.J.:
{\it Tame actions of group schemes: integrals and slices.}
 Duke Math. J. {\bf 82}(1996), no. 2, 269--308.


\bibitem[CNET1]{CNET1} Cassou-Nogu\`es, Ph., Erez, B., Taylor, M.J.:
 {\it Invariants of a quadratic form attached to a tame covering of schemes.}
J. Th. des Nombres de Bordeaux  {\bf 12}(2000),  597-660.



\bibitem[CNET2]{CNET2} Cassou-Nogu\`es, Ph., Erez, B., Taylor, M.J.:
{\it On Fr\"ohlich twisted bundles,} AMS Russian Translations, Series 2, {\bf 219}(2006).


\bibitem[CCMT1]{CCMT1} Cassou-Nogu\`es, Ph., Chinburg, T., Morin, B., Taylor, M.J.:
 {\it Quadratic invariants for actions of group schemes.}
Preprint to appear.



%\bibitem[E1]{E1} Erez, B.: {\it
%A survey of recent work on the square root of the inverse
%different.} Journ\'ees Arithm\'etiques, 1989. Ast\'erisque {\bf
%198-200}(1991), 133--152 (1992).



\bibitem[Es-K-V]{EKV} Esnault, H., Kahn, B., Viehweg, E.: {\it
Coverings with odd ramification and Stiefel-Whitney classes.} J.
reine angew. Math. {\bf 441}(1993), 145--188.

\bibitem[F]{F-SW} Fr\"ohlich, A.: {\it
Orthogonal representations of Galois groups, Stiefel-Whitney
classes and Hasse-Witt invariants.} J. reine angew. Math. {\bf
360}(1985), 84--123.

\bibitem [SGA\  4]{G-V}  Grothendieck, A., Verdier, J.L. {\it Topos.} Lecture Notes in Math. {\bf 269} (1972).

\bibitem[J1]{J-HW} Jardine, J. F.: {\it Simplicial objects in Grothendieck topos} Applications of Algebraic $K$-theory to Algebraic
Geometry and Number Theory I, 193-239, Contemp. Math., {\bf 55}, Amer. Math. Soc., Providence, RI, 1986.


\bibitem[J2]{J-HW} Jardine, J. F.: {\it
Universal Hasse-Witt classes.}
 Algebraic $K$-theory and algebraic number theory, 83--100,
Contemp. Math., {\bf 83}, Amer. Math. Soc., Providence, RI, 1989.

 \bibitem[J3]{J-HS} Jardine, J. F.: {\it Cohomological invariants associated to symmetric bilinears forms.} Exp. Math. {\bf 10}(1992),
 97-134.


\bibitem[K]{K} Kahn, B.: {\it
 Equivariant Stiefel-Whitney classes.}
 J. Pure Appl. Algebra {\bf 97}(1994), no. 2, 163--188.

\bibitem[Kne]{Kne} Knebusch, M.: {\it
Symmetric bilinear forms over algebraic varieties}, Conference on
Quadratic Forms---1976 , pp. 103--283 ed. G. Orzech. Queen's
Papers in Pure and Appl. Math., {\bf 46}, Queen's Univ., Kingston,
Ont., 1977.

\bibitem [Mc]{Mc} Maclane, S.: {\it Homology.} Springer Grundleheren Band 114,  1975.

%\bibitem[K-O1]{KO} Knus, M-A., Ojanguren, M. :
%Th\'eorie de la descente et alg\`ebres d'Azumaya. Lecture Notes in
%Math., {\bf 389}. Springer-Verlag, Berlin-New York, 1974.

%\bibitem[K-O2]{KO-Cliff} Knus, M-A., Ojanguren, M. : {\it
%The Clifford algebra of a metabolic space.}
% Arch. Math. {\bf 56}(1991), no. 5, 440--445.

%\bibitem[La]{Lab} Laborde, O. : {\it
%Classes de Stiefel-Whitney en cohomologie \'etale.} Colloque sur
%les Formes Quadratiques (Montpellier, 1975). Bull. Soc. Math.
%France Suppl. Mem. {\bf 48}(1976), 47--51.


\bibitem[Mi]{Mi} Milne, J.S.:
{\it Etale cohomology.} Princeton Mathematical Series, {\bf 33}.
Princeton University Press, Princeton, N.J., 1980.

%\bibitem[O1]{O1} Ojanguren, M.: {\it
%A splitting theorem for quadratic forms.} Comment. Math. Helv.
%{\bf 57}(1982), no. 1, 145--157.


\bibitem [R]{Re} Reiner, I.: {\it Maximal Orders.} L.M.S Monographs, Academic Press, 1975.


\bibitem[Sa1]{TS1} Saito, T.: {\it
The sign of the functional equation of the $L$-function of an orthogonal motive.} Invent.math. {\bf 120}(1995), 119-142.

\bibitem [Sa1]{TS1} Saito, T.: {\it Stiefel-Whitney classes of $l$-adic cohomology.}
preprint, ArXiv 1012.1922


\bibitem[Se1]{S-HW} Serre, J-P.: {\it
L'invariant de Witt de la forme ${\rm Tr}(x^2)$.} Com\-ment. Math.
Helv. {\bf 59}(1984), no. 4, 651--676.



\bibitem[Se 2]{SerreCL} Serre, J-P.:
{\it Corps locaux.} Hermann, Paris, 1968.

\bibitem[Sn]{Snaith} Snaith, V.P.: {\it
Stiefel-Whitney classes of bilinear forms - a formula of Serre.}
Can. Bull. Math. {\bf 28}(1985), no. 2, 218-222.

\bibitem [Sw]{Swe} Sweedler, M.: {\it Hopf Algebras.} W.A. Benjamin, New York, 1969.

\bibitem [W]{Wa}Waterhouse, W.C.: {\it Introduction to Affine Group Schemes.} Springer-Verlag, New-York, 1979.

\end{thebibliography}
\end{document}